\tikzset{%
element/.style={draw, shape=circle, fill=white, inner sep=1.4pt}
}
\DeclareSymbolFont{bbold}{U}{bbold}{m}{n}
\DeclareSymbolFontAlphabet{\mathbbold}{bbold}
\theoremstyle{plain}
\newtheorem{thm}{Theorem}[section]
\newtheorem{lem}[thm]{Lemma}
\newtheorem{lemma}[thm]{Lemma}
\newtheorem{corollary}[thm]{Corollary}
\newtheorem{pro}[thm]{Proposition}
\newtheorem{proposition}[thm]{Proposition}
\newtheorem{example}[thm]{Example}
\newtheorem{problem}[thm]{Problem}
\newtheorem{definition}[thm]{Definition}
\newtheorem{remark}[thm]{Remark}
\newcommand{\up}[1]{\textup{#1}}
\newcommand{\bc}{\mathbf{c}}
\newcommand{\bp}{\mathbf{p}}
\newcommand{\bu}{\mathbf{u}}
\newcommand{\bv}{\mathbf{v}}
\newcommand{\bw}{\mathbf{w}}
\begin{document}

\title[Flat semirings]
{The flat semirings with nilpotent multiplicative reducts}

\author{Zidong Gao}
\address{School of Mathematics, Northwest University, Xi'an, 710127, Shaanxi, P.R. China}
\email{zidonggao@yeah.net}

\author{Miaomiao Ren}
\address{School of Mathematics, Northwest University, Xi'an, 710127, Shaanxi, P.R. China}
\email{miaomiaoren@nwu.edu.cn}

\subjclass[2010]{16Y60, 03C05, 08B15, 08B26}
\keywords{semiring, variety, finite basis problem.}
\thanks{Miaomiao Ren, corresponding author, is supported by National Natural Science Foundation of China (12371024, 12571020).
}

\begin{abstract}
In this paper, we focus on the variety $\mathbf{NF}_3$ generated by all flat semirings
with $3$-nilpotent multiplicative reducts.
By introducing graph semirings, we characterize all subdirectly irreducible members of $\mathbf{NF}_3$.
We prove that the variety $\mathbf{NF}_3$ has uncountably many subvarieties
and show that every finitely generated subvariety of $\mathbf{NF}_3$ is a Cross variety.
Moreover,
we demonstrate that $\mathbf{NF}_3$ has a unique limit subvariety,
which is generated by all acyclic graph semirings.
\end{abstract}

\maketitle

\section{Introduction and preliminaries}\label{sec:intro}
An \emph{additively idempotent semiring} (ai-semiring for short) is an algebra $(S, +, \cdot)$ with two binary operations $+$ and $\cdot$
such that the additive reduct $(S, +)$ is a commutative idempotent semigroup,
the multiplicative reduct $(S, \cdot)$ is a semigroup and $S$ satisfies the distributive laws
\[
(x+y)z\approx xy+xz,\quad x(y+z)\approx xy+xz.
\]
This class of algebras includes (sometimes with extra unary operations or constants), the Kleene semiring of regular languages~\cite{con},
the max-plus algebra~\cite{aei}, the semiring of all binary relations on a set
\cite{dol09}, the matrix semiring over an ai-semiring~\cite{bg} and the distributive lattices~\cite{bs}.
These and other similar algebras have played important roles in several branches of mathematics,
such as algebraic geometry~\cite{cc}, tropical geometry~\cite{ms}, information science~\cite{gl}, and
theoretical computer science~\cite{go}.

A variety is \emph{finitely based} if it can be defined by finitely many identities;
otherwise, it is \emph{nonfinitely based}. An algebra $A$ is \emph{finitely based}
if the variety $\mathsf{V}(A)$ it generates is finitely based; otherwise, $A$ is \emph{nonfinitely based}.
In the last two decades,
the finite basis problem for ai-semirings have been intensively studied, for example,
see~\cite{dol07, gjrz, gpz05, jrz, pas05, rlzc, rlyc, rjzl, rzw, sr, shap23, vol21, wrz, wzr, yrzs, zrc}.
Dolinka~\cite{dol07} found the first example of a nonfinitely based finite ai-semiring and denoted it by $\Sigma_7$.
Pastijn et al.~\cite{gpz05, pas05} solved the finite basis problem for ai-semirings satisfying $x^2\approx x$.
Ren et al.~\cite{rzw} showed that every ai-semiring satisfying $x^3\approx x$ is finitely based.
Zhao et al.~\cite{zrc} showed that with the possible exception of $S_7$,
all ai-semirings of order three are finitely based.
The Cayley tables of $S_7$ can be found in Table~\ref{tb24111401}.
Jackson et al.~\cite{jrz} presented some general results about the finite basis problem for finite ai-semirings.
As applications, they showed that $S_7$ and $B_2^1$ whose multiplicative reduct is $6$-element Brandt monoid are both nonfinitely based.
We also note that Volkov~\cite{vol21} independently resolved the finite basis problem for $B^1_2$ and $\Sigma_7$ using a different method.
Recently, Gao et al.~\cite{gjrz} proved that the variety $\mathsf{V}(S_7)$ has uncountably many subvarieties.

\begin{table}[ht]
\caption{The Cayley tables of $S_7$} \label{tb24111401}
\begin{tabular}{c|ccc}
$+$      &$\infty$&$a$&$1$\\
\hline
$\infty$ &$\infty$&$\infty$&$\infty$\\
$a$      &$\infty$&$a$&$\infty$\\
$1$      &$\infty$&$\infty$&$1$\\
\end{tabular}\qquad
\begin{tabular}{c|ccc}
$\cdot$  &$\infty$&$a$&$1$\\
\hline
$\infty$ &$\infty$&$\infty$&$\infty$\\
$a$      &$\infty$&$\infty$&$a$\\
$1$      &$\infty$&$a$&$1$\\
\end{tabular}
\end{table}

By a \emph{flat semiring} $S$ we mean an ai-semiring such that its multiplicative reduct has a zero element $0$
and $a+b=0$ for all distinct elements $a$ and $b$ of $S$.
Jackson et al.~\cite[Lemma 2.2]{jrz}
observed that this definition of $+$ makes a semigroup $S$ with $0$ into a flat semiring if and only if
it is $0$-cancellative, that is, $ab=ac\neq0$ implies $b=c$ and $ba=ca\neq0$ implies $b=c$
for all $a, b, c\in S$. The algebra $S_7$ is an example of flat semirings.
The class of flat semirings has played an important role in the theory of ai-semiring varieties
(see~\cite{rjzl, rlyc, rlzc, yrzs}).
Let ${\bf F}$ denote the variety generated by all flat semirings.
The following result, which is due to Jackson et al.~\cite[Lemma 2.1]{jrz} (see also~\cite{jac:flat}),
solved the finite basis problem for ${\bf F}$
and characterized the subdirectly irreducible members of ${\bf F}$.
\begin{lem}\label{lem24121301}
The variety ${\bf F}$ is finitely based and each subdirectly irreducible member of ${\bf F}$ is a flat semiring.
\end{lem}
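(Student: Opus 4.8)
The plan is to prove both assertions at once, by producing a finite set $\Sigma$ of identities, each valid in every flat semiring, with the property that every subdirectly irreducible model of $\Sigma$ is itself a flat semiring. Once this is done the lemma follows quickly. On the one hand, every flat semiring satisfies $\Sigma$, so $\mathbf F\subseteq\Mod(\Sigma)$. On the other hand, if $\mathbf A\models\Sigma$ then, by Birkhoff's subdirect representation theorem, $\mathbf A$ is a subdirect product of subdirectly irreducible quotients $\mathbf A/\theta_i$; each $\mathbf A/\theta_i$ satisfies $\Sigma$, hence is a flat semiring, hence lies in $\mathbf F$, so $\mathbf A$ embeds in a product of members of $\mathbf F$ and therefore $\mathbf A\in\mathbf F$. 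Thus $\mathbf F=\Mod(\Sigma)$ is finitely based, and any subdirectly irreducible member of $\mathbf F$ is a subdirectly irreducible model of $\Sigma$, hence a flat semiring. Since the lemma asserts precisely the existence of such a $\Sigma$, this reduction is not merely plausible but essentially forced.

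For $\Sigma$ I would take the defining identities of ai-semirings together with a small number of further identities valid in every flat semiring. These extra identities have to encode equationally the two features separating flat semirings from arbitrary ai-semirings: that the additive semilattice $(S,+)$ is flat with a greatest element, and that this greatest element is a multiplicative zero — equivalently, in view of the observation of Jackson et al.\ recalled above, that $(S,\cdot)$ is a $0$-cancellative semigroup whose zero coincides with the additive top. Concretely one wants identities forcing, in any model, that a sum of two products returns one of those products or else a fixed multiplicatively absorbing element, and that the required $0$-cancellation then appears as an equational consequence once the top has been pinned down as the multiplicative zero. Checking that each chosen identity holds in all flat semirings is a routine computation using distributivity and the explicit description of $+$.

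The heart of the argument — and the step I expect to be the main obstacle — is the claim that a subdirectly irreducible $\mathbf A\models\Sigma$ is a flat semiring; here subdirect irreducibility is indispensable, and I would argue with the monolith $\mu$ of $\mathbf A$, possibly also using that the class of flat semirings is closed under subalgebras and ultraproducts. The plan is: (1) locate a greatest element $\top$ of $(A,+)$, combining the identities of $\Sigma$ with the presence of $\mu$ (a general model of $\Sigma$ need not have a top); (2) show $\top$ is a multiplicative zero, by checking that if $a\top\neq\top$ for some $a\in A$ then one can build a nonzero congruence missing $\mu$, contradicting subdirect irreducibility; and (3) show that $(A,+)$ has no three-term chain, a chain $a<b<c$ again producing, via $\Sigma$ and subdirect irreducibility, a congruence-theoretic separation incompatible with $\mu$ being least. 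Together (1)--(3) say exactly that $\mathbf A$ is a flat semiring.

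Two points make this delicate. First, $\Sigma$ must be chosen strong enough: a single plausible-looking identity such as $x+x^2\approx x+x^2+x^3$ does not, together with the ai-semiring axioms, axiomatize $\mathbf F$, because it fails to entail the ``periodic'' collapses $x+x^{1+d}\approx x+x^{1+d}+x^{1+2d}$ that hold in every flat semiring; the $0$-cancellative behaviour therefore has to be captured by identities involving extra variables, and identifying the right short list is part of the problem. Second, the congruence analysis of subdirectly irreducible models of $\Sigma$ — especially step (3), where one must control the interaction between the multiplication and the monolith — is where the genuine work lies, and is the part I would expect to occupy most of the proof.
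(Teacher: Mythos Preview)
The paper does not prove this lemma at all: it is quoted from the literature, attributed to Jackson et al.\ \cite[Lemma 2.1]{jrz} (and ultimately to Jackson \cite{jac:flat}), and stated without any argument. So there is no ``paper's own proof'' to compare your proposal against.

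That said, your plan is the correct one and is essentially what the cited references do: exhibit a finite $\Sigma$ valid in all flat semirings and show that every subdirectly irreducible model of $\Sigma$ is flat, whence $\mathbf F=\Mod(\Sigma)$ by Birkhoff. Your identification of the difficulties is also accurate---the nontrivial work is pinning down a small $\Sigma$ that captures $0$-cancellativity equationally, and then the monolith analysis in a subdirectly irreducible model. What you have written is a strategy rather than a proof: you have not actually produced $\Sigma$, and steps (1)--(3) are stated as intentions (``one can build a nonzero congruence missing $\mu$'') rather than carried out. If the goal were to supply a self-contained proof here, you would need to write down the explicit identities and execute the congruence argument; if the goal is merely to use the lemma, a citation suffices, which is exactly what the paper does.
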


If $S$ is a cancellative semigroup, then $S^0$ is $0$-cancellative and becomes
a flat semiring, which is called the \emph{flat extension} of $S$. Let $G$ be a finite group.
Jackson~\cite[Theorem 7.3]{jac:flat} proved that the flat extension of $G$ is finitely based if and only if
all Sylow subgroups of $G$ are abelian.
Next, we introduce another important class of flat semirings.
Let $W$ be a nonempty subset of a free semigroup,
and let $S(W)$ denote the set of all nonempty subwords of words in
$W$ together with a new symbol $0$. If we define a binary operation $\cdot$ on $S(W)$ by the rule
\begin{equation*}
\bu\cdot \bv=
\begin{cases}
\bu\bv& \text{if }~\bu\bv\in S(W)\setminus \{0\}, \\
0& \text{otherwise,}
\end{cases}
\end{equation*}
then $(S(W), \cdot)$ forms a semigroup with zero element $0$.
It is easy to verify that $(S(W), \cdot)$ is $0$-cancellative, and so $S(W)$ becomes a flat semiring.
In particular, if $W$ consists of a single word $\bw$ we shall write $S(W)$ as $S(\bw)$.
We can also perform the same construction on a free commutative semigroup.
The resulting algebra is denoted by $S_c(W)$. Correspondingly, we have the notation $S_c(\bw)$.

A \emph{flat nil-semiring} $S$ is a flat semiring whose multiplicative reduct is a nil-semigroup,
that is, for any $a\in S$ there is an integer $n\geq 1$ such that $a^n=0$.
Let $k\geq 1$ be an integer.
Then a flat semiring $S$ is \emph{$k$-nilpotent} if its multiplicative reduct is $k$-nilpotent,
that is, $a_1\cdots a_k=0$ for all $a_1, \ldots, a_k\in S$.
A flat semiring is \emph{nilpotent} if it is $k$-nilpotent for some integer $k\geq 1$.
Note that $S(W)$ and $S_c(W)$ are both nilpotent if $W$ is finite.

The significance of flat semirings in the study of ai-semiring varieties stems from several key features.
First, they often serve as minimal counterexamples, providing tractable yet nontrivial test cases
for equational properties. Second, their congruence lattices are completely determined by
multiplicative ideals, making them particularly amenable to structural analysis. Third, despite
their apparent simplicity, flat semirings exhibit rich behaviour with respect to the finite basis
problem: they include both finitely based and nonfinitely based examples, and their subvariety
structure can be remarkably complex--a fact vividly illustrated by the aforementioned results
on $\mathsf{V}(S_7)$. These features make the class of flat semirings a natural proving ground
for developing techniques that may apply more broadly in the study of ai-semiring varieties.

A significant step in this direction was taken by Jackson et al.~\cite[Corollary 4.11]{jrz},
who proved that every finite flat semiring whose variety contains $S_7$ is nonfinitely based.
In light of this result, they posed the following fundamental problem~\cite[Problem 7.1(4)]{jrz}:
\begin{problem}\label{problem29}
Which finite flat semirings are finitely based?
\end{problem}
\noindent
Up to now, this problem has not been fully resolved.
In particular, Gao et al.~\cite{gjrz} showed that every flat semiring
whose variety contains $S_c(abc)$ and is included in
the variety $\mathsf{V}(S_7)$ has no finite basis for its equational theory.
Wu et al.~\cite{wzr} completely solved the finite basis problem for finite nilpotent flat semirings of the form $S_c(W)$,
and proved that some finite nilpotent flat semirings of the form $S(W)$ are nonfinitely based.
Ren et al.~\cite{rlzc} and Shaprynski\v{\i}~\cite{shap23} solved the problem for $4$-element flat semirings.
The present paper is another contribution to Problem~\ref{problem29}.
We shall show that every finite $3$-nilpotent flat semiring is finitely based.

For any integer $k \geq 1$,
let $\mathbf{NF}_k$ denote the variety generated by all $k$-nilpotent flat semirings.
\begin{proposition}\label{fkjidi}
Let $k\geq 1$ be an integer. Then
\begin{enumerate}[$(\rm i)$]
\item$\mathbf{NF}_k$ is the subvariety of $\mathbf{F}$
determined by the $k$-nilpotent identity
\begin{equation}\label{id25052201}
x_1\ldots x_k \approx y_1\cdots y_k.
\end{equation}

\item $\mathbf{NF}_k$ is finitely based.

\item $\mathbf{NF}_k$ is a proper subvariety of $\mathbf{NF}_{k+1}$.
\end{enumerate}
\end{proposition}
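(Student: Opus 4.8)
The plan is to handle the three parts in order, with part~(1) carrying almost all of the content and parts~(2)--(3) following quickly from it and from Lemma~\ref{lem24121301}.

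For~(1), write $\mathbf{W}$ for the subvariety of $\mathbf{F}$ axiomatized, relative to $\mathbf{F}$, by the identity~\eqref{id25052201}. The inclusion $\mathbf{NF}_k\subseteq\mathbf{W}$ is immediate: every $k$-nilpotent flat semiring lies in $\mathbf{F}$ by definition, and it satisfies~\eqref{id25052201} since both sides evaluate to the multiplicative zero. For the reverse inclusion I would invoke that a variety is generated by its subdirectly irreducible members, so it is enough to show that each subdirectly irreducible $S\in\mathbf{W}$ belongs to $\mathbf{NF}_k$. Such an $S$ is a subdirectly irreducible member of $\mathbf{F}$, hence a flat semiring by Lemma~\ref{lem24121301}, and it satisfies~\eqref{id25052201}, so every $k$-fold product $a_1\cdots a_k$ of elements of $S$ takes one fixed value; substituting the multiplicative zero for all the $a_i$ (and using that it is absorbing) shows this fixed value is $0$. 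Thus $S$ is $k$-nilpotent, so $S\in\mathbf{NF}_k$, giving $\mathbf{W}\subseteq\mathbf{NF}_k$ and hence equality.

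Part~(2) is then immediate: $\mathbf{F}$ is finitely based by Lemma~\ref{lem24121301}, and by~(1) the variety $\mathbf{NF}_k$ is obtained from $\mathbf{F}$ by adjoining the single identity~\eqref{id25052201}, so $\mathbf{NF}_k$ is finitely based as well. For the containment in~(3), note that a vanishing $k$-fold product stays vanishing when multiplied by one more factor, so every $k$-nilpotent flat semiring is $(k+1)$-nilpotent; equivalently,~\eqref{id25052201} with parameter $k$ implies the analogous identity with parameter $k+1$ modulo $\mathbf{F}$, and so $\mathbf{NF}_k\subseteq\mathbf{NF}_{k+1}$.

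The substantive point of~(3) is properness, for which I would exhibit a flat semiring in $\mathbf{NF}_{k+1}\setminus\mathbf{NF}_k$. Fix a word $\bw$ of length $k$ and consider $S(\bw)$. Any product of $k+1$ nonzero factors of $\bw$ would, if nonzero, equal their concatenation, a word of length at least $k+1>|\bw|$, which is not a factor of $\bw$; hence every such product is $0$, so $S(\bw)$ is $(k+1)$-nilpotent and lies in $\mathbf{NF}_{k+1}$. On the other hand, writing $\bw=a_1\cdots a_k$ we have $a_1\cdot a_2\cdots a_k=\bw\neq 0$ in $S(\bw)$ (each partial product is a prefix of $\bw$, hence a factor), so $S(\bw)$ fails~\eqref{id25052201} and therefore, by~(1), is not in $\mathbf{NF}_k$; when $k=1$ this just says the two-element flat semiring $S(a)$ lies in $\mathbf{NF}_2$ but not in the trivial variety $\mathbf{NF}_1$. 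I do not anticipate a real obstacle here; the only steps needing a little care are the ``fixed value equals $0$'' reduction in~(1) and the bookkeeping of word lengths that makes $S(\bw)$ do what is claimed in~(3), both of which are short.
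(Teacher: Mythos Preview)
Your proposal is correct and follows essentially the same route as the paper: both prove~(1) by showing the subdirectly irreducible members of the subvariety of $\mathbf{F}$ defined by~\eqref{id25052201} are $k$-nilpotent flat semirings via Lemma~\ref{lem24121301}, derive~(2) from~(1) and Lemma~\ref{lem24121301}, and establish properness in~(3) by exhibiting a flat semiring $S(\bw)$ with $|\bw|=k$ (the paper takes $\bw=a^k$ specifically). You even spell out the ``fixed value equals $0$'' step in~(1) and the length bookkeeping in~(3) that the paper leaves implicit.
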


\begin{proof}
(i) Let ${\mathcal V}_k$ denote the subvariety of $\mathbf{F}$ determined by the identity \eqref{id25052201}.
It is easy to see that $\mathbf{NF}_k$ is a subvariety of ${\mathcal V}_k$.
Conversely, let $S$ be an arbitrary subdirectly irreducible member of ${\mathcal V}_k$.
By Lemma~\ref{lem24121301} we deduce that $S$ is a $k$-nilpotent flat semiring
and so $S$ is a member of $\mathbf{NF}_k$.
Thus ${\mathcal V}_k$ is a subvariety of $\mathbf{NF}_k$
and so $\mathbf{NF}_k=\mathcal{V}_k$ as required.

(ii) This follows from (i) and Lemma~\ref{lem24121301} immediately.

(iii) It is easy to see that every $k$-nilpotent flat semiring is $(k+1)$-nilpotent.
So $\mathbf{NF}_k$ is a subvariety of $\mathbf{NF}_{k+1}$.
Notice that the flat semiring $S(a^k)$ is a member of $\mathbf{NF}_{k+1}$, but does not lie in $\mathbf{NF}_k$.
We therefore have that $\mathbf{NF}_k$ is proper in $\mathbf{NF}_{k+1}$.
\end{proof}

It is easy to see that $\mathbf{NF}_1$ is the trivial variety.
Corollary~\ref{coro25052801} will show that $\mathbf{NF_2}$
is generated by the $2$-element flat semiring $S(a)$ and is a minimal nontrivial ai-semiring variety.
Jackson et al.~\cite{gjrz} introduced block hypergraph semirings and the associated ai-semiring term
$t_{\mathbb{H}}$ for a hypergraph $\mathbb{H}$.
Using these algebras and terms, they fully solved the finite basis problem for all subvarieties of $\mathsf{V}(S_7)$,
completely characterized all finite subdirectly irreducible members of $\mathsf{V}(S_7)$,
and proved that the intersection of $\mathsf{V}(S_7)$ and $\mathbf{NF}_k$
has uncountably many subvarieties for all $k\geq 4$.
Motivated by this work, we introduce graph semirings in Section~2
and then apply them to fully describe all subdirectly irreducible members of $\mathbf{NF}_3$
and show that $\mathbf{NF}_3$ also has uncountably many subvarieties.
In Section~3 we prove that
every finitely generated subvariety of $\mathbf{NF}_3$ is a Cross variety.

A variety is \emph{hereditarily finitely based} if all of its subvarieties are finitely based.
A \emph{limit variety} means a minimal nonfinitely based subvariety.
By Zorn's lemma, every nonfinitely based variety contains a limit variety.
So a variety is hereditarily finitely based if and only if it contains no limit subvarieties.
Therefore, classifying hereditarily finitely based varieties in a certain sense reduces to
classifying limit varieties.
As Lee and Volkov~\cite{lv} stated,
the latter task is generally quite challenging, and even constructing explicit examples of limit varieties proves to be highly nontrivial.
Ren et al.~\cite{rjzl} used the flat extensions of groups to
provide an infinite family of limit ai-semiring varieties.
They also showed that the variety $\mathsf{V}(S_c(abc))$ is a limit subvariety of
the variety $\mathsf{V}(S_7)$.
Gao et al.~\cite{gjrz} proved that $\mathsf{V}(S_c(abc))$ is the unique limit subvariety of $\mathsf{V}(S_7)$.
All of these limit varieties have finitely many subvarieties.
In Section~4 we present a new limit ai-semiring variety,
which is the unique limit subvariety of $\mathbf{NF}_3$ and has countably infinitely many subvarieties.

Recall that a nontrivial algebra is \emph{subdirectly irreducible} if it has a least non-diagonal congruence.
Let $S$ be a flat semiring. One can easily verify that there is a one-to-one
order-preserving correspondence between semiring congruences on $S$ and multiplicative ideals of $S$.
So $S$ is subdirectly irreducible if and only if it has a least nonzero multiplicative ideal.
A nonzero element $\omega$ of $S$ is an \emph{annihilator} if $\omega s=s\omega=0$ for all $s\in S$.
It follows directly from the definition that the annihilators of $S(W)$ and $S_c(W)$ are precisely maximal words in $W$.
The following result provides another characterization of subdirectly irreducible flat nil-semirings
in terms of annihilators.

\begin{pro}\label{sdnil}
Let $S$ be a flat nil-semiring.
Then $S$ is subdirectly irreducible if and only if there exists an annihilator $\omega$ of $S$ such that $\omega\in S^1aS^1$ for all nonzero elements $a$ of $S$.
In this case, $\omega$ is the unique annihilator of $S$ and $\{0,\omega\}$ is the least nonzero multiplicative ideal of $S$.
\end{pro}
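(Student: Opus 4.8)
The plan is to argue entirely with multiplicative ideals, using the correspondence recalled above: $S$ is subdirectly irreducible if and only if it has a least nonzero multiplicative ideal. Two preliminary observations carry most of the weight. First, for every nonzero $a\in S$ the set $S^1aS^1=\{uav\mid u,v\in S^1\}$ is a nonzero multiplicative ideal: it contains $a$, it contains $0=(0\cdot a)\cdot 1$ since $0,1\in S^1$, and by associativity $s(uav)=(su)av\in S^1aS^1$ and $(uav)s=ua(vs)\in S^1aS^1$ for every $s\in S$. Second, if $\omega$ is an annihilator of $S$, then $\{0,\omega\}$ is a nonzero multiplicative ideal, because $s\omega=\omega s=0$ and $s0=0s=0$ for all $s\in S$.

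For the ``if'' direction, assume $\omega$ is an annihilator of $S$ with $\omega\in S^1aS^1$ for every nonzero $a\in S$. Let $I$ be any nonzero multiplicative ideal and pick $a\in I\setminus\{0\}$. Since $I$ is an ideal containing $a$, we get $S^1aS^1\subseteq I$, so $\omega\in I$ and hence $\{0,\omega\}\subseteq I$. Thus $\{0,\omega\}$ is the least nonzero multiplicative ideal of $S$, and $S$ is subdirectly irreducible.

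For the ``only if'' direction, assume $S$ is subdirectly irreducible with least nonzero multiplicative ideal $M$, and fix some $\omega\in M\setminus\{0\}$. Since $S^1\omega S^1$ is a nonzero multiplicative ideal we have $M\subseteq S^1\omega S^1$, while $\omega\in M$ and $M$ being an ideal give $S^1\omega S^1\subseteq M$; hence $M=S^1\omega S^1$. The crucial step is to show $\omega$ is an annihilator. Suppose not; by symmetry we may assume $\omega s\neq 0$ for some $s\in S$. Then $S^1(\omega s)S^1$ is a nonzero multiplicative ideal, so $\omega\in M\subseteq S^1(\omega s)S^1$; writing $\omega=p(\omega s)q=p\omega(sq)$ with $p,q\in S^1$ and setting $u=p\in S^1$, $t=sq\in S$, we get $\omega=u\omega t$, and iterating yields $\omega=u^n\omega t^n$ for every $n\ge 1$. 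As $S$ is a nil-semigroup, $t^n=0$ for some $n$, whence $\omega=u^n\omega t^n=u^n(\omega\cdot 0)=0$, contradicting $\omega\neq 0$. Therefore $\omega$ is an annihilator, so $S^1\omega S^1=\{0,\omega\}$ and $M=\{0,\omega\}$; moreover $\omega\in M\subseteq S^1aS^1$ for every nonzero $a\in S$, as required.

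Finally, the ``in this case'' assertions follow from what has been done: in either direction $\{0,\omega\}$ is exhibited as the least nonzero multiplicative ideal of $S$; and if $\omega'$ is any nonzero annihilator, then $\{0,\omega'\}$ is a nonzero multiplicative ideal, so $\{0,\omega\}\subseteq\{0,\omega'\}$ and therefore $\omega'=\omega$, i.e. $\omega$ is the unique annihilator of $S$. The one genuinely delicate point is proving that the chosen generator $\omega$ of $M$ annihilates $S$; this is exactly where the nil hypothesis is used, via the telescoping identity $\omega=u^n\omega t^n$ and the nilpotency of $t$. Everything else is routine bookkeeping with principal ideals.
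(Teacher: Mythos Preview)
Your proof is correct and follows essentially the same approach as the paper: both directions work with principal multiplicative ideals $S^1aS^1$ and use an iteration argument together with the nil hypothesis to force a contradiction. The only organizational difference is in the ``only if'' direction: the paper first takes two nonzero elements $a,b$ of the minimal ideal $I$, writes $a=(xu)^n a(vy)^n$ and uses nilpotency to conclude $a=b$ (so $|I|=2$), and \emph{then} separately shows the surviving element is an annihilator via $s^n\omega=\omega$; you instead fix one $\omega\in M$, show directly that it annihilates via $\omega=u^n\omega t^n$ with $t\in S$ nilpotent, and deduce $M=\{0,\omega\}$ afterwards. Your route is slightly more economical (one telescoping argument rather than two), but the substance is identical.
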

\begin{proof}
Suppose that there exists an annihilator $\omega$ of $S$ such that $\omega\in S^1aS^1$ for all nonzero elements $a$ of $S$.
Then $\{0, \omega\}$ is clearly a nonzero multiplicative ideal of $S$.
Let $J$ be an arbitrary nonzero multiplicative ideal of $S$. Take a nonzero element $a$ of $J$.
Then $\omega$ is in $S^1aS^1$. Since $S^1aS^1\subseteq J$, it follows that $\omega$ is in $J$
and so $\{0, \omega\}\subseteq J$. This shows that $\{0,\omega\}$ is the least nonzero multiplicative ideal of $S$.
So $S$ is subdirectly irreducible. Furthermore,
if $\omega'$ is an arbitrary annihilator of $S$,
then $\{0,\omega'\}$ is a multiplicative ideal of $S$ and so $\{0,\omega\}\subseteq \{0,\omega'\}$.
Thus $\omega=\omega'$ and so $\omega$ is the unique annihilator of $S$.

Conversely, suppose that $S$ is subdirectly irreducible.
Then $S$ has a least nonzero multiplicative ideal $I$.
Let $a$ and $b$ be nonzero elements of $I$.
Then $S^{1}aS^{1}=S^{1}bS^{1}=I$. This implies that $a=xby$ and $b=uav$ for some $x,y,u,v\in S^{1}$.
So
\[
a=(xu)a(vy)=(xu)^2a(vy)^2=\cdots.
\]
Since $(S, \cdot)$ is a nil-semigroup, it is not hard to prove that $x=u=v=y=1$.
Hence $a=b$ and so $|I|=2$. We may write $I=\{0,\omega\}$.
Next, we shall show that $\omega$ is an annihilator of $S$.
Indeed, let $s$ be an element of $S$. Then $s\omega\in I$ and so $s\omega$ ie equal to $0$ or $\omega$.
If $s\omega=\omega$ then $s^n\omega=\omega$ for all $n\geq 1$.
As $(S, \cdot)$ is a nil-semigroup, we have that $s^n=0$ for some $n\geq 1$.
Thus $\omega=0$, a contradiction.
So $s\omega = 0$. Similarly, $\omega s=0$.
We have shown that $\omega$ is an annihilator of $S$.
Since $\{0,\omega\}$ is the least nonzero multiplicative ideal of $S$,
it follows immediately that $\omega\in S^1aS^1$ for all nonzero elements $a$ of $S$.
\end{proof}

\begin{proposition}\label{sinilpotent}
A nilpotent flat semiring is subdirectly irreducible if and only if it has a unique annihilator.
\end{proposition}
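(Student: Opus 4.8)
The plan is to reduce everything to Proposition~\ref{sdnil}. First I would observe that any nilpotent flat semiring $S$ is a flat nil-semiring: if $S$ is $k$-nilpotent then $a^{k}=0$ for every $a\in S$, so $(S,\cdot)$ is a nil-semigroup and Proposition~\ref{sdnil} applies to $S$.

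The forward implication is then immediate from that proposition: if $S$ is subdirectly irreducible it has an annihilator $\omega$ with $\omega\in S^{1}aS^{1}$ for all nonzero $a$, and the concluding clause of Proposition~\ref{sdnil} already asserts that this $\omega$ is the unique annihilator of $S$.

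For the converse I would argue as follows. Suppose $S$ is $k$-nilpotent and has a unique annihilator $\omega$. By Proposition~\ref{sdnil} it is enough to check that $\omega\in S^{1}aS^{1}$ for every nonzero $a\in S$, so fix such an $a$. For a nonzero $c\in S$ let $\ell(c)$ be the largest $m$ such that $c=c_{1}\cdots c_{m}$ for some $c_{1},\dots,c_{m}\in S$; since $k$-nilpotency forces every product of $k$ elements of $S$ to be $0$, one has $1\le\ell(c)\le k-1$, so $\ell$ is well defined. The nonempty set $T=(S^{1}aS^{1})\setminus\{0\}$ contains $a$ and $\ell$ is bounded on $T$, so I can pick $b=x_{0}ay_{0}\in T$ with $x_{0},y_{0}\in S^{1}$ maximizing $\ell(b)$. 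The crucial claim is that $b$ is an annihilator: if not, there is $s\in S$ with, say, $bs\ne0$ (the case $sb\ne0$ being symmetric). Because $0$ is a multiplicative zero of $S$, every contiguous subproduct of a nonzero product is again nonzero, so $bs=x_{0}a(y_{0}s)$ lies in $T$; and writing $b=c_{1}\cdots c_{m}$ with $m=\ell(b)$ gives $bs=c_{1}\cdots c_{m}s$, a product of $m+1$ elements of $S$, whence $\ell(bs)\ge m+1>\ell(b)$, contradicting the maximality of $\ell(b)$. Hence $b$ is an annihilator, so $b=\omega$ by uniqueness, and therefore $\omega=x_{0}ay_{0}\in S^{1}aS^{1}$. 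Proposition~\ref{sdnil} then yields that $S$ is subdirectly irreducible.

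I do not expect a serious obstacle: once the observation that a nilpotent flat semiring is a flat nil-semiring is in place, both directions follow from Proposition~\ref{sdnil}, and the only point needing a little care is the bookkeeping with the length function $\ell$ — that it is bounded (from $k$-nilpotency) and that appending a factor to a nonzero product strictly increases it (from $0$ being absorbing) — which is routine for nilpotent flat semirings.
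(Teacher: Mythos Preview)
Your proof is correct and follows essentially the same approach as the paper: both directions are reduced to Proposition~\ref{sdnil}, and for the converse you use nilpotency to find a maximal-length nonzero product in $S^{1}aS^{1}$, which must be an annihilator and hence equals $\omega$. The only cosmetic difference is that the paper maximizes the length of nonzero products containing $a$ as a factor, whereas you maximize the factorization-length function $\ell$ over $S^{1}aS^{1}\setminus\{0\}$; the underlying idea is identical.
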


\begin{proof}
Suppose that $S$ is a nilpotent flat semiring that has a unique annihilator $\omega$.
For any nonzero element $a$ of $S$, let $m$ denote the number
\[
\max\{k\geq 1 \mid  (\exists a_1, a_2, \ldots, a_k\in S)~a_1\cdots a_k\neq 0, a\in \{a_1, \ldots, a_k\}\}.
\]
Then there exist $a_1, \ldots, a_{i-1}, a_{i+1}, \ldots, a_m\in S$ such that
$a_1\cdots a_{i-1}aa_{i+1}\cdots a_m$, which is denoted by $u$, is nonzero.
The maximality of $m$ ensures that that $u$ is an annihilator of $S$ and so $u=\omega$, since $\omega$ is the unique annihilator of $S$.
Thus $\omega\in S^1aS^1$ and so is subdirectly irreducible by Proposition~\ref{sdnil}.
The converse part is a consequence of Proposition~\ref{sdnil}.
\end{proof}

The following example shows that a flat nil-semiring that has a unique annihilator need not be subdirectly irreducible.
\begin{example}
Let $W = \{x_1x_2\cdots x_n\mid n\geq 1\}\cup\{x_0\}$.
Then $S(W)$ is a flat nil-semiring $($satisfying $x^2\approx 0$$)$ with unique annihilator $x_0$, but it is not subdirectly irreducible:
the set $S(W)\backslash \{x_0\}$ is a multiplicative ideal avoiding $x_0$, so $\{0,x_0\}$ is not the least nonzero ideal, contradicting the criterion of Proposition~$\ref{sdnil}$.
\end{example}

\begin{lemma}\label{SkSk+1}
Let $S$ be a nilpotent flat semiring that has a unique annihilator $\omega$.
Then $S^{k}=\{0,\omega\}$ for some $k\geq 1$.
\end{lemma}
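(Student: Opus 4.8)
The plan is to take $k$ to be the \emph{nilpotency index} of $S$ — the largest integer for which a nonzero product of that length exists — and to show that every nonzero product of that length must coincide with $\omega$. First I would record that $S$ is nontrivial: an annihilator is by definition nonzero, so the hypothesis forces $|S|\geq 2$, whence $S^1=S\neq\{0\}$. Since $S$ is nilpotent, $S^m=\{0\}$ for some $m\geq 1$, so the set $\{k\geq 1 : S^k\neq\{0\}\}$ is nonempty and bounded above. Let $n$ be its maximum; then $S^n\neq\{0\}$ while $S^{n+1}=\{0\}$.

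Next I would verify that $S^n=\{0,\omega\}$, which gives the lemma with $k=n$. The element $0$ lies in $S^n$ (as a product of $n$ copies of the multiplicative zero), so it suffices to identify the nonzero elements of $S^n$. Let $c=a_1\cdots a_n\in S^n$ be nonzero. For every $s\in S$ we have $sc=sa_1\cdots a_n\in S^{n+1}=\{0\}$ and, symmetrically, $cs\in S^{n+1}=\{0\}$, so $sc=cs=0$; that is, $c$ is an annihilator of $S$. Since $\omega$ is the unique annihilator, $c=\omega$. Because $S^n\neq\{0\}$, such a nonzero $c$ exists, so $\omega\in S^n$, and therefore $S^n=\{0,\omega\}$.

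There is no real obstacle here; the only points requiring care are that the relevant exponent is the last nonzero power of $S$ rather than the nilpotency degree at which $S$ collapses to $\{0\}$, and that $S$ is genuinely nontrivial so that this exponent is at least $1$. One could alternatively route the argument through Proposition~\ref{sinilpotent}, noting that $S$ is subdirectly irreducible with least nonzero ideal $\{0,\omega\}$, but the direct computation above is shorter and self-contained.
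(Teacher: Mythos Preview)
Your proof is correct and follows essentially the same approach as the paper: both take $k$ to be the last exponent with $S^k\neq\{0\}$ and observe that every nonzero element of $S^k$ is an annihilator, hence equals $\omega$. Your version is simply more detailed, making explicit the nontriviality of $S$ and the membership $0\in S^n$, which the paper leaves implicit.
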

\begin{proof}
By assumption, there exists an integer $k\geq 1$ such that $S^{k}\neq \{0\}$ and $S^{k+1}= \{0\}$.
The maximality of $k$ ensures that every nonzero element of $S^{k}$ is an annihilator of $S$.
Since $\omega$ is the unique annihilator of $S$, it follow immediately that $S^{k}=\{0, \omega\}$.
\end{proof}

\begin{corollary}\label{coro25052801}
The variety $\mathbf{NF_2}$ is generated by $S(a)$ and is a minimal nontrivial ai-semiring variety.
\end{corollary}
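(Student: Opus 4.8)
The plan is to read off this corollary directly from the structural results already established. Concretely, $S(a)$ is the two-element flat semiring $\{0,a\}$ with $a^2=0$ and $a+0=0$, and since all of its products are $0$ it lies in $\mathbf{NF}_2$; hence $\mathsf{V}(S(a))\subseteq\mathbf{NF}_2$. For the reverse inclusion I would use that $\mathbf{NF}_2$ is generated by its subdirectly irreducible members and show that each such member is isomorphic to $S(a)$. So let $S$ be subdirectly irreducible in $\mathbf{NF}_2$. By Proposition~\ref{fkjidi}(1) it satisfies $xy\approx zw$, and by Lemma~\ref{lem24121301} it is a flat semiring; since the zero of its multiplicative reduct satisfies $0\cdot 0=0$, the identity $xy\approx zw$ forces every product in $S$ to equal $0$, so $S$ is a $2$-nilpotent flat semiring.

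Next I would invoke the annihilator machinery. Being a subdirectly irreducible nilpotent flat semiring, $S$ has a unique annihilator $\omega$ by Proposition~\ref{sinilpotent}, and by Lemma~\ref{SkSk+1} we have $S^{k}=\{0,\omega\}$ for some $k\geq 1$. Because $S$ is $2$-nilpotent, $S^{2}=\{0\}$, which forces $k=1$; thus $S=S^{1}=\{0,\omega\}$ is a two-element flat semiring with $\omega^{2}=0$, i.e.\ $S\cong S(a)$. Combined with the previous paragraph this yields $\mathbf{NF}_2=\mathsf{V}(S(a))$.

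For the minimality claim, $\mathsf{V}(S(a))$ is nontrivial since $|S(a)|=2$. If $\mathcal W$ is any nontrivial subvariety of $\mathsf{V}(S(a))$, then $\mathcal W$ contains a nontrivial subdirectly irreducible algebra, which by the above must be $S(a)$; hence $S(a)\in\mathcal W$ and $\mathcal W=\mathsf{V}(S(a))$. Therefore $\mathsf{V}(S(a))$ has no proper nontrivial subvariety and is a minimal nontrivial ai-semiring variety. None of the steps is genuinely difficult once the earlier propositions are available; the only place to be slightly careful is verifying that the defining identity of $\mathbf{NF}_2$ together with flatness collapses a subdirectly irreducible member all the way down to exactly two elements, after which the identification with $S(a)$ and the atom argument are immediate.
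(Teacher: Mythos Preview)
Your argument is correct and follows essentially the same route as the paper: show that every nontrivial subdirectly irreducible member of $\mathbf{NF}_2$ is a $2$-nilpotent flat semiring (via Lemma~\ref{lem24121301} and Proposition~\ref{fkjidi}), invoke Proposition~\ref{sinilpotent} and Lemma~\ref{SkSk+1} to force $S=\{0,\omega\}\cong S(a)$, and conclude $\mathbf{NF}_2=\mathsf{V}(S(a))$. The one genuine difference is the minimality claim: the paper simply cites an external reference~\cite{sr} for the fact that $\mathsf{V}(S(a))$ is an atom, whereas you supply a direct argument (any nontrivial subvariety contains a nontrivial subdirectly irreducible member, which must be $S(a)$), making your proof self-contained at no real cost.
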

\begin{proof}
Let $S$ be a nontrivial subdirectly irreducible member of $\mathbf{NF_2}$.
It follows from Lemma~\ref{lem24121301} and Proposition~\ref{sinilpotent} that $S$ has a unique annihilator $\omega$.
By Lemma~\ref{SkSk+1} we deduce that $S=\{0, \omega\}$.
Now it is a routine matter to verify that $S$ is isomorphic to $S(a)$.
This shows that up to isomorphism,
$S(a)$ is the only nontrivial subdirectly irreducible member of $\mathbf{NF_2}$.
So $\mathbf{NF_2}$ is generated by $S(a)$.
From \cite{sr} we know that $\mathsf{V}(S(a))$ is a minimal nontrivial ai-semiring variety.
This proves the required result.
\end{proof}

\begin{definition}
A flat semiring $S$ is the $0$-direct union of a family $(S_i)_{i\in I}$ of flat semirings,
denoted by $S=\bigcup_{i\in I}^{\bullet} S_i$,
if
\[
\textstyle S=\bigcup_{i\in I} S_i'
\]
and
\[
S_i'\cong S_i, S_j'\cap S_k'=S_j'\cdot S_k'=\{0\}
\]
for all $i, j, k\in I$ with $j\neq k$.
\end{definition}

Let $\mathcal{K}$ be a class of ai-semirings.
Then $\mathsf{V}(\mathcal{K})$ denotes the variety generated by $\mathcal{K}$, that is,
the smallest variety that contains $\mathcal{K}$.
Let $(S_i)_{i\in I}$ be a class of flat semirings. Then
it is easy to verify that $\bigcup_{i\in I}^{\bullet} S_i$ is isomorphic to a subdirect product of $(S_i)_{i\in I}$.
So $\mathsf{V}(\{S_i\mid i\in I\})=\mathsf{V}(\bigcup_{i\in I}^{\bullet} S_i)$.
Together with Lemma~\ref{lem24121301}, we have the following proposition.
\begin{proposition}
Every subvariety of $\mathbf F$ can be generated by a single flat semiring.
In particular, every finitely generated subvariety of $\mathbf F$ can be generated by a finite flat semiring.
\end{proposition}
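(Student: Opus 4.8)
The plan is to combine the $0$-direct union construction with Lemma~\ref{lem24121301} and the preceding discussion. First I would observe that any subvariety $\mathcal V$ of $\mathbf F$ is generated by its subdirectly irreducible members, and by Lemma~\ref{lem24121301} every subdirectly irreducible member of $\mathbf F$ is a flat semiring; let $(S_i)_{i\in I}$ be a set of representatives, up to isomorphism, of the subdirectly irreducible members of $\mathcal V$. Then $\mathcal V=\mathsf V(\{S_i\mid i\in I\})$. Using the fact recorded just above the statement, namely that a $0$-direct union $\bigcup_{i\in I}^{\bullet}S_i$ embeds as a subdirect product of the $S_i$ (so $\mathsf V(\{S_i\mid i\in I\})=\mathsf V(\bigcup_{i\in I}^{\bullet}S_i)$), we get that $\mathcal V$ is generated by the single flat semiring $S=\bigcup_{i\in I}^{\bullet}S_i$. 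This handles the first assertion.

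For the ``in particular'' clause, suppose $\mathcal V$ is finitely generated, say $\mathcal V=\mathsf V(A)$ for a finite algebra $A$ (which lies in $\mathbf F$). Since $A$ is finite, it has only finitely many subdirectly irreducible quotients, and by Birkhoff's theorem $\mathcal V$ is generated by these finitely many finite subdirectly irreducible algebras $S_1,\dots,S_n$, each of which is a finite flat semiring by Lemma~\ref{lem24121301}. Then $S=S_1\,{\buildrel\bullet\over\cup}\cdots\,{\buildrel\bullet\over\cup}\,S_n$ is a finite flat semiring (its underlying set has cardinality $\sum_{i=1}^n(|S_i|-1)+1$) and, by the same subdirect-product observation, $\mathsf V(S)=\mathsf V(\{S_1,\dots,S_n\})=\mathcal V$.

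There is essentially no hard step here: the whole argument is an immediate corollary of Lemma~\ref{lem24121301} together with the elementary verification (already asserted in the excerpt) that a $0$-direct union of flat semirings is a subdirect product of its summands. The only point requiring a line of care is that in the finitely generated case one must pass from the given finite generator $A$ to the finite set of its subdirectly irreducible quotients before forming the $0$-direct union, so that the resulting single generator is again finite; this is routine via Birkhoff's subdirect representation theorem applied to the finite algebra $A$.
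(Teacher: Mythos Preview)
Your argument is correct and follows exactly the route the paper intends: the proposition is stated in the paper as an immediate consequence of Lemma~\ref{lem24121301} together with the observation that $\mathsf V(\{S_i\mid i\in I\})=\mathsf V\bigl(\bigcup_{i\in I}^{\bullet}S_i\bigr)$, and your write-up simply unpacks this, including the routine use of Birkhoff's subdirect representation to guarantee finiteness of the $0$-direct union in the finitely generated case.
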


\begin{definition}
A flat semiring $S$ is the $\{0, \omega\}$-direct union of a family $(S_i)_{i\in I}$ of subdirectly irreducible flat nil-semirings,
denoted by $\bigcup_{i\in I}^{\omega} S_i$,
if
\[
\textstyle S=\bigcup_{i\in I} S_i'
\]
and
\[
S_i'\cong S_i, S_j'\cap S_k'=\{0,\omega\}, S_j'\cdot S_k'=\{0\}
\]
for all $i, j, k\in I$ with $j \neq k$.
\end{definition}
A straightforward verification using the definition above yields the following proposition.
\begin{proposition}
Suppose that $\{S_i\}_{i\in I}$ is a family of subdirectly irreducible flat nil-semirings with pairwise intersection $\{0,\omega\}$.
If $J\subseteq I$ and $S_i$ is a subalgebra of $T_i$ containing $\omega$ for each $i\in J$, then $\bigcup_{i\in J}^{\omega} S_i$ is a subalgebra of $\bigcup_{i\in I}^{\omega} T_i$.
\end{proposition}

\begin{remark}
The crucial condition that each $S_i$ must share the same distinguished element $\omega$ with $T_i$. This point deserves special emphasis; for instance, $S(a^2)\circ S(a^2)$ is not a subalgebra of $S(a^2b)\circ S(a^2)$, because the latter satisfies the identity $x^2+y^2\approx x^2+y^2+xy$ while the former does not.
\end{remark}

In the sequel we sometimes denote by $S_1\circ S_2$ the $\{0, \omega\}$-direct union of
the subdirectly irreducible flat nil-semirings $S_1$ and $S_2$.
\begin{proposition}
The $\{0, \omega\}$-direct union of a family of subdirectly irreducible flat nil-semirings is also subdirectly irreducible.
\end{proposition}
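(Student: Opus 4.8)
The plan is to apply the characterization of subdirectly irreducible flat nil-semirings from Proposition~\ref{sdnil}. Write $S=\bigcup_{i\in I}^{\omega}S_i$, so that $S=\bigcup_{i\in I}S_i'$ with each $S_i'\cong S_i$ a subalgebra of $S$ and, for all distinct $j,k\in I$, $S_j'\cap S_k'=\{0,\omega\}$ and $S_j'\cdot S_k'=\{0\}$; here $\omega\neq 0$ by the definition of the $\{0,\omega\}$-direct union. If $I$ is a singleton, then $S\cong S_1$ is subdirectly irreducible by hypothesis, so we may assume $|I|\geq 2$. Then for every $i\in I$ we can pick some $j\neq i$ and conclude $\omega\in S_i'\cap S_j'\subseteq S_i'$, so $\omega$ lies in every factor $S_i'$.

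First I would record that $S$ is a flat nil-semiring: it is a flat semiring by construction, and any $a\in S$ lies in some $S_i'$, where, since $S_i'\cong S_i$ is nil and multiplication in the subalgebra $S_i'$ agrees with that in $S$, some power $a^n$ equals $0$. Next I would show that $\omega$ is an annihilator of $S$: given $s\in S$, choose $i$ with $s\in S_i'$ and some $j\neq i$; then $s\in S_i'$ and $\omega\in S_j'$ give $s\omega\in S_i'\cdot S_j'=\{0\}$, and symmetrically $\omega s=0$. Since $\omega\neq 0$, it is an annihilator of $S$, and in particular an annihilator of each $S_i'$; because $S_i'$ is subdirectly irreducible, the uniqueness clause of Proposition~\ref{sdnil} forces $\omega$ to be the unique annihilator of $S_i'$.

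Finally comes the key containment of Proposition~\ref{sdnil}. Let $a$ be a nonzero element of $S$, say $a\in S_i'$ with $a\neq 0$. Applying Proposition~\ref{sdnil} to the subdirectly irreducible flat nil-semiring $S_i'$, and using that $\omega$ is its annihilator, we obtain $\omega\in (S_i')^1a(S_i')^1\subseteq S^1aS^1$. Thus $\omega$ is an annihilator of $S$ lying in $S^1aS^1$ for every nonzero $a\in S$, so Proposition~\ref{sdnil} yields that $S$ is subdirectly irreducible, with $\{0,\omega\}$ its least nonzero multiplicative ideal and $\omega$ its unique annihilator.

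I anticipate no real difficulty: the one point requiring care is to verify that the element $\omega$ named in the $\{0,\omega\}$-direct union coincides with the annihilator of each factor $S_i'$ supplied by Proposition~\ref{sdnil}, which is precisely what the computation $s\omega=\omega s=0$ together with the uniqueness of the annihilator delivers. One must also keep in mind the trivial case $|I|=1$, for which the claim is immediate.
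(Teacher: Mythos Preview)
Your proof is correct and follows essentially the same route as the paper's: both arguments verify that $S$ is a flat nil-semiring with $\omega$ as an annihilator, then invoke Proposition~\ref{sdnil} on each factor $S_i'$ to obtain $\omega\in (S_i')^1 a (S_i')^1\subseteq S^1 a S^1$ for every nonzero $a$, and conclude by Proposition~\ref{sdnil} again. The paper's proof is considerably terser, simply asserting that ``$S$ is also a flat nil-semiring containing a unique annihilator $\omega$'' without justification, whereas you carefully verify this (including the identification of $\omega$ with the annihilator of each $S_i'$ via uniqueness) and separately treat the degenerate case $|I|=1$; these are welcome clarifications but not a different method.
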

\begin{proof}
Suppose that $S$ is the $\{0, \omega\}$-direct union of a family $(S_i)_{i\in I}$ of subdirectly irreducible flat nil-semirings.
Then $S$ is also a flat nil-semiring containing a unique annihilator $\omega$.
For each $a\in S\backslash \{0\}$, we have that  $a\in S_i$ for some $i\in I$.
It follows from Proposition~\ref{sdnil} that $\omega\in S_i^1aS_i^1\subseteq S^1aS^1$.
By Proposition~\ref{sdnil} again $S$ is subdirectly irreducible as required.
\end{proof}

The following example tells us that a subdirectly irreducible flat nil-semiring is not necessarily nilpotent.
\begin{example}
The $\{0, \omega\}$-direct union of $\{S_c(a_{1}\cdots a_{k})\mid k\geq 1\}$ is a subdirectly irreducible flat nil-semiring
but is not nilpotent.
\end{example}

Let $X^+$ denote the free semigroup over a countably infinite set $X$ of variables.
By distributivity, all ai-semiring terms over $X$ are finite sums of words in $X^+$.
An \emph{ai-semiring identity} over $X$ is a formal expression of the form
\[
\bu\approx \bv,
\]
where $\bu$ and $\bv$ are ai-semiring terms over $X$.
Let $S$ be an ai-semiring, and let $\bu\approx \bv$ be an ai-semiring identity over $\{x_1, x_2, \ldots, x_n\}$.
Then $S$ \emph{satisfies} $\bu\approx \bv$ if
$\bu(a_1, a_2, \ldots, a_n)=\bv(a_1, a_2, \ldots, a_n)$ for all $a_1, a_2, \ldots, a_n\in S$, where $\bu(a_1, a_2, \ldots, a_n)$ denotes the result of evaluating $\bu$ in $S$ under the assignment $x_i\mapsto a_i$, and similarly for $\bv(a_1, a_2, \ldots, a_n)$.

For other notations and terminology used in this paper, the reader is referred to
Jackson et al.~\cite{jrz} and Ren et al.~\cite{rjzl} for background on semirings,
and to Burris and Sankappanavar~\cite{bs} for information concerning the theory of varieties.
We shall assume that the reader is familiar with the basic results in these areas.

\section{The number of subvarieties of $\mathbf{NF}_3$}
In this section we prove that the variety $\mathbf{NF}_3$ has uncountably many subvarieties.
Throughout this paper, all graphs are directed, contain no isolated vertices,
and satisfy the condition that every vertex has both out-degree and in-degree at most $1$.
One can easily deduce that every connected component of such a graph is either a path or a cycle.
Let ${\mathbb G}=\langle V(\mathbb G), E(\mathbb G)\rangle$ be a graph, where $V(\mathbb G)$ is the vertex set
and $E(\mathbb G)$ is the edge set.
Let $S_{\mathbb G}$ denote the disjoint union of $V(\mathbb G)$ and $\{0, \omega\}$.
Define a multiplicative operation $\cdot$ on $S_{\mathbb G}$ by the rule
\[
x\cdot y=\begin{cases}
    \omega,& (x,y)\in E(\mathbb G),\\
    0,&\text{otherwise}.
  \end{cases}
\]
It is straightforward to verify that $(S_{\mathbb G},\cdot)$ forms a $0$-cancellative semigroup and is $3$-nilpotent,
and so $S_{\mathbb G}$ becomes a flat semiring in $\mathbf{NF}_3$.
We call $S_{\mathbb G}$ the \emph{graph semiring} induced by ${\mathbb G}$.
For instance, let $\mathbb G$ be the path $a \rightarrow b \rightarrow c$. Then $S_{\mathbb G} = \{0, \omega, a, b, c\}$ with $ab = \omega = bc$, and all other products among vertices equal to $0$. This instantly clarifies the connection between graph adjacency and multiplication.
In particular, if the graph ${\mathbb G}$ is empty, then $S_{\mathbb G}$ is isomorphic to $S(a)$.
The following result demonstrates the importance of graph semirings.
\begin{thm}\label{siSg}
A nontrivial $3$-nilpotent flat semiring is subdirectly irreducible if and only if it is isomorphic to some graph semiring.
\end{thm}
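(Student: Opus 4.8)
The plan is to lean entirely on the structural facts already proved: Proposition~\ref{sinilpotent} (a nilpotent flat semiring is subdirectly irreducible iff it has a unique annihilator), Lemma~\ref{SkSk+1} ($S^k=\{0,\omega\}$ for some $k$ when $\omega$ is the unique annihilator), and the observation of Jackson et al. that the multiplicative reduct of a flat semiring is $0$-cancellative. For the ``if'' direction, recall from the discussion preceding the theorem that $S_{\mathbb G}$ is a nontrivial $3$-nilpotent flat semiring; by Proposition~\ref{sinilpotent} it is enough to check that $\omega$ is its unique annihilator. Plainly $\omega$ annihilates $S_{\mathbb G}$. Conversely $0$ is not an annihilator by convention, and no vertex $v\in V(\mathbb G)$ can be an annihilator, since $\mathbb G$ has no isolated vertices and hence $v$ lies on some edge $(v,w)$ or $(u,v)$, giving $vw=\omega\neq 0$ or $uv=\omega\neq 0$.

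For the ``only if'' direction, let $S$ be a nontrivial $3$-nilpotent subdirectly irreducible flat semiring. By Proposition~\ref{sinilpotent} it has a unique annihilator $\omega$, and by Lemma~\ref{SkSk+1} there is $k\ge 1$ with $S^k=\{0,\omega\}$. Since $S$ is $3$-nilpotent, $S^3=\{0\}$, forcing $k\in\{1,2\}$. If $k=1$ then $S=\{0,\omega\}$ with $\omega^2=0$, and a routine check shows $S\cong S(a)$, which is the graph semiring of the empty graph. So the substantive case is $k=2$: here $V:=S\setminus\{0,\omega\}$ is nonempty (as $\omega\in S^2$ is a product $xy$ with $x,y\in V$, since $x$ or $y$ equal to $0$ or $\omega$ would force $xy=0$). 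Define a directed graph $\mathbb G$ with vertex set $V$ by declaring $(a,b)$ to be an edge exactly when $ab=\omega$; I claim the identity on $V$, fixing $0$ and $\omega$, is an isomorphism $S\to S_{\mathbb G}$.

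To justify the claim I would verify two things. First, that $\mathbb G$ is a graph in the restrictive sense fixed at the start of this section. It has no isolated vertices: any $a\in V$ is not an annihilator (as $a\neq\omega$ and $\omega$ is the unique one), so $ab\neq 0$ or $ba\neq 0$ for some $b\in S$; since $ab\in S^2=\{0,\omega\}$ this gives $ab=\omega$ (resp.\ $ba=\omega$), and then $b\in V$ because $b\in\{0,\omega\}$ would make the product $0$, so $a$ lies on an edge. Every vertex has out-degree and in-degree at most $1$: if $ab=ac=\omega\neq 0$ then $b=c$ by $0$-cancellativity of $S$, and dually for in-degree. Second, that the multiplication of $S$ is recovered from $\mathbb G$: products involving $0$ or $\omega$ are $0$, and for $a,b\in V$ we have $ab\in S^2=\{0,\omega\}$ with $ab=\omega$ iff $(a,b)\in E(\mathbb G)$; since the additive structure of any flat semiring is determined by its underlying set, this makes the stated bijection a semiring isomorphism onto $S_{\mathbb G}$.

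The whole argument is essentially bookkeeping once Proposition~\ref{sinilpotent} and Lemma~\ref{SkSk+1} are available; the only place that genuinely requires care is matching $\mathbb G$ to the paper's narrow notion of graph. Concretely, ``no isolated vertices'' corresponds exactly to uniqueness of the annihilator, and ``out-degree and in-degree at most one'' corresponds exactly to $0$-cancellativity of flat semirings — and these same two points are what make the correspondence a genuine equivalence rather than merely one direction, since in the ``if'' direction it is precisely the no-isolated-vertices hypothesis that forces $\omega$ to be the unique annihilator of $S_{\mathbb G}$.
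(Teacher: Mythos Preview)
Your proof is correct and follows essentially the same route as the paper's: both directions hinge on Proposition~\ref{sinilpotent} and Lemma~\ref{SkSk+1}, and the graph $\mathbb G$ you build on $S\setminus\{0,\omega\}$ is exactly the one in the paper. You are more explicit than the paper in verifying that $\mathbb G$ meets the section's restrictive definition of ``graph'' (no isolated vertices via uniqueness of the annihilator, bounded in/out-degree via $0$-cancellativity) and in handling the degenerate $k=1$ case separately, but these are elaborations of what the paper's proof leaves as ``easy to verify'' rather than a different approach.
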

\begin{proof}
Let $S_{\mathbb G}$ be a graph semiring.
Then $S_{\mathbb G}$ is a $3$-nilpotent flat semiring that has a unique annihilator $\omega$.
By Proposition~\ref{sinilpotent} we deduce that $S_{\mathbb G}$ is subdirectly irreducilbe.
Conversely, suppose that $S$ is a subdirectly irreducible $3$-nilpotent flat semiring.
By Proposition~\ref{sinilpotent} and Lemma~\ref{SkSk+1},
$S$ contains a unique annihilator $\omega$, and $S^2$ is contained in $\{0,\omega\}$.
Define a graph $\mathbb G$ with the vertex set $V(\mathbb G)=S\setminus\{0,\omega\}$
and the edge set $E(\mathbb G)=\{(x,y)\mid xy=\omega\}$.
Now it is easy to verify that $S$ is isomorphic to the graph semiring $S_{\mathbb G}$.
This proves the required result.
\end{proof}

\begin{definition}
Let $m\geq 2$ and $n\geq 1$ be integers, and let
\[
\bp_m=x_1x_2+x_2x_3+\cdots+x_{m-1}x_m
\]
and
\[
\bc_n=x_1x_2+x_2x_3+\cdots+x_{n-1}x_n+x_nx_1
\]
be ai-semiring terms.
Then the path graph semiring $S_{\bp_m}$ denotes
the graph semiring induced by the path graph
\[
\langle\{a_i \mid 1\leq i \leq m\}, \{(a_i, a_{i+1})\mid 1\leq i <m\}\rangle,
\]
and the cycle graph semiring $S_{\bc_n}$ denotes the graph semiring induced by
the cycle graph
\[
\langle\{a_i \mid 1\leq i \leq n\}, \{(a_i, a_{i+1})\mid 1\leq i <n\}\cup \{(a_n, a_1)\}\rangle.
\]
\end{definition}

\begin{remark}
$S_{\bp_2}\cong S(ab)$, $S_{\bc_1}\cong S(a^2)$, and $S_{\bc_2}\cong S_c(ab)$.
\end{remark}
Notice that as we required graphs to have all vertex indegrees
and outdegrees at most $1$ it follows that
 every connected component of a graph is either a path or a cycle.
By the construction of graph semirings we immediately have the following result,
which will be repeatedly used in the sequel.
\begin{pro}\label{pro25061320}
The $\{0, \omega\}$-direct union of graph semirings is a graph semiring. Conversely,
if ${\mathbb G}$ is a graph and $\{{\mathbb G}_i\}_{i\in I}$ denotes the set of all connected components of ${\mathbb G}$,
then $S_{\mathbb G}$ is the $\{0, \omega\}$-direct union of $\{S_{\mathbb G_i}\}_ {i\in I}$,
where each $S_{\mathbb G_i}$ is either a path graph semiring or a cycle graph semiring.
\end{pro}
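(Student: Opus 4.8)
The plan is to prove the two directions separately, in each case exhibiting an explicit graph and an explicit bijection, and then checking that the bijection respects both operations. Throughout, the key simplification is that the additive structure of any flat semiring is completely determined by its carrier set and its top element $0$: one has $a+a=a$ and $a+b=0$ whenever $a\neq b$. Consequently, any bijection between two flat semirings that fixes $0$ automatically preserves addition, so in both directions only the multiplication has to be examined.

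For the forward direction, suppose $S=\bigcup_{i\in I}^{\omega} S_{\mathbb{G}_i}$ is the $\{0,\omega\}$-direct union of a family of graph semirings, realised as $S=\bigcup_{i\in I} S_i'$ with $S_i'\cong S_{\mathbb{G}_i}$. I would let $\mathbb{H}$ be the disjoint union of the graphs $\mathbb{G}_i$, with vertex set $\bigsqcup_{i\in I} V(\mathbb{G}_i)$ and edge set $\bigsqcup_{i\in I} E(\mathbb{G}_i)$. Since each $\mathbb{G}_i$ has no isolated vertices and all in- and out-degrees at most $1$, the same holds for $\mathbb{H}$, so $\mathbb{H}$ is a graph in the sense of this paper and $S_{\mathbb{H}}$ is defined. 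Identifying $V(\mathbb{G}_i)$ with $S_i'\setminus\{0,\omega\}$ via the given isomorphisms — these sets being pairwise disjoint because $S_j'\cap S_k'=\{0,\omega\}$ — one obtains a bijection $\varphi\colon S_{\mathbb{H}}\to S$ fixing $0$ and $\omega$. For the multiplicative check: products with a factor $0$ are $0$ on both sides; products with a factor $\omega$ and no factor $0$ are $0$ on both sides, since $\omega$ is not a vertex of $\mathbb{H}$ while $\omega$ annihilates $S$ (it annihilates each $S_i'$, and $S_i'\cdot S_j'=\{0\}$); a product of two vertices lying in one component $\mathbb{G}_i$ is computed inside $S_i'\cong S_{\mathbb{G}_i}$; and a product of two vertices in distinct components is $0$, because $\mathbb{H}$ has no cross-component edges and $S_j'\cdot S_k'=\{0\}$. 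Hence $\varphi$ is an isomorphism and $S\cong S_{\mathbb{H}}$.

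For the converse, let $\mathbb{G}$ be a graph with connected components $\{\mathbb{G}_i\}_{i\in I}$. As recalled before the statement, each $\mathbb{G}_i$ is a directed path or a directed cycle, so $S_{\mathbb{G}_i}$ is a path graph semiring or a cycle graph semiring. Set $S_i':=V(\mathbb{G}_i)\cup\{0,\omega\}\subseteq S_{\mathbb{G}}$. The heart of the argument is that $\mathbb{G}_i$, being a union of connected components of $\mathbb{G}$, is closed in $\mathbb{G}$: every edge of $\mathbb{G}$ incident with a vertex of $V(\mathbb{G}_i)$ already lies in $\mathbb{G}_i$, and there is no edge of $\mathbb{G}$ between $V(\mathbb{G}_j)$ and $V(\mathbb{G}_k)$ for $j\neq k$. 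From this it is routine to verify that $S_i'$ is a subalgebra of $S_{\mathbb{G}}$ with $S_i'\cong S_{\mathbb{G}_i}$, that $\bigcup_{i\in I} S_i'=S_{\mathbb{G}}$, that $S_j'\cap S_k'=\{0,\omega\}$, and that $S_j'\cdot S_k'=\{0\}$ for distinct $j,k$. Thus $S_{\mathbb{G}}=\bigcup_{i\in I}^{\omega} S_i'$, which is the desired conclusion.

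Neither direction presents a serious obstacle; the work is essentially bookkeeping. The point that requires the most care is the case analysis for multiplication in the forward direction — in particular handling products involving $\omega$ and products of vertices from distinct components — together with checking that the disjoint-union graph $\mathbb{H}$ still meets the global degree constraints imposed on graphs in this paper.
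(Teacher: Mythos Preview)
Your proposal is correct and follows exactly the approach the paper intends: the paper states the result as immediate from the construction of graph semirings (noting beforehand that every connected component is a path or a cycle) and gives no proof, so your argument is simply a careful unpacking of that construction via the disjoint union of the underlying graphs in one direction and the connected-component decomposition in the other.
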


\begin{lemma}\label{Scnm}
Let $m$ and $n$ be positive integers. Then
$S_{\bc_n}$ satisfies $\bc_m\approx x^3$ if and only if $n$ does not divide $m$.
\end{lemma}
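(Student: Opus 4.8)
The plan is to translate satisfaction of the identity $\bc_m\approx x^3$ in $S_{\bc_n}$ into a statement about closed walks in the directed $n$-cycle. First I would note that $S_{\bc_n}$ is $3$-nilpotent, so the term $x^3$ is evaluated to $0$ under every assignment; consequently $S_{\bc_n}$ satisfies $\bc_m\approx x^3$ if and only if $\bc_m$ is evaluated to $0$ under every assignment $\varphi$ of its variables $x_1,\dots,x_m$ into $S_{\bc_n}$. So the task reduces to deciding when $\bc_m$ can take a nonzero value.

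Next I would pin down the possible values of $\bc_m$. Every product of two elements of $S_{\bc_n}$ lies in $\{0,\omega\}$ (products never land on a vertex, since edges join vertices only), so each summand $x_ix_{i+1}$ of $\bc_m$ is evaluated to $0$ or $\omega$; since $0$ is the top of the additive semilattice, it absorbs under $+$, while $\omega+\omega=\omega$. Hence $\bc_m(\varphi)=\omega$ exactly when every summand is evaluated to $\omega$, and $\bc_m(\varphi)=0$ otherwise. Therefore $\bc_m$ takes a nonzero value under some assignment precisely when there is a $\varphi$ with $(\varphi(x_i),\varphi(x_{i+1}))$ an edge of $S_{\bc_n}$ for every $i$ read cyclically modulo $m$; in particular each $\varphi(x_i)$ must then be a vertex $a_{j_i}$ rather than $0$ or $\omega$.

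The heart of the argument is to decide when such an assignment exists. By the definition of the edges of $S_{\bc_n}$, the conditions $(\varphi(x_i),\varphi(x_{i+1}))\in E$ amount to $j_{i+1}\equiv j_i+1\pmod n$ for $1\le i<m$ together with the wrap-around condition $j_1\equiv j_m+1\pmod n$ coming from the summand $x_mx_1$; chaining these congruences forces $m\equiv 0\pmod n$. Conversely, if $n\mid m$ the periodic assignment sending $x_i$ to the vertex whose index is $i$ reduced modulo $n$ into $\{1,\dots,n\}$ satisfies all the edge conditions, including the wrap-around. Thus a "bad" assignment with $\bc_m(\varphi)\neq 0$ exists if and only if $n\mid m$, and hence $S_{\bc_n}$ satisfies $\bc_m\approx x^3$ if and only if $n\nmid m$.

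I do not anticipate a genuine obstacle here; the only places that need care are the bookkeeping with the semilattice order (that $0$ absorbs in a sum while $\omega$ does not, so a single zero summand kills $\bc_m$) and the handling of the cyclic index condition together with its wrap-around term, since that is exactly where the divisibility $n\mid m$ is forced. One could alternatively phrase the criterion as the non-existence of a graph homomorphism from the directed $m$-cycle to the directed $n$-cycle, but the explicit index computation is short and self-contained, so I would present that.
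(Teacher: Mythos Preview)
Your proposal is correct and follows essentially the same approach as the paper: both arguments reduce the question to whether all summands of $\bc_m$ can simultaneously evaluate to $\omega$, translate this into the congruences $j_{i+1}\equiv j_i+1\pmod n$ together with the wrap-around condition, and obtain $n\mid m$ by chaining them, with the periodic assignment giving the converse. Your write-up is slightly more explicit about why $x^3$ always evaluates to $0$ and why a single zero summand forces $\bc_m$ to $0$, but the mathematical content is the same.
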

\begin{proof}
Suppose that $n$ divides $m$. Then $m=nk$ for some integer $k$.
Consider the substitution $\varphi: \{x, x_1, \ldots, x_m\} \to S_{\bc_n}$ such that
$\varphi(x)=0$ and $\varphi(x_{rn+s})=a_s$ for all $0\leq r<k$ and $1\leq s \leq n$.
It is easy to see that $\varphi(\bc_m)=\omega$ and $\varphi(x^3)=0$.
So $S_{\bc_m}$ does not satisfy $\bc_m\approx x^3$.

Conversely, assume that $S_{\bc_n}$ does not satisfy $\bc_m\approx x^3$.
Then there is a substitution $\psi: \{x, x_1, \ldots, x_m\} \to S_{\bc_n}$ such that
$\psi(\bc_m)\neq 0$, since $\psi(x^3)=0$. So we have
\[
\psi(x_1)\psi(x_2)=\cdots =\psi(x_{m-1})\psi(x_m)=\psi(x_m)\psi(x_1)=\omega.
\]
For any $1\leq i \leq m$, let $a_{j_i}$ denote $\psi(x_i)$, $1\leq j_i \leq n$. Then
\begin{equation}\label{eq25053001}
j_{k+1} \equiv j_k+1 \pmod n
\end{equation}
for all $1\leq k <m$ and
\begin{equation}\label{eq25053002}
j_1\equiv j_{m}+1 \pmod n.
\end{equation}
By \eqref{eq25053001} one can deduce that
\begin{equation}\label{eq25053003}
j_{m} \equiv j_1+m-1 \pmod n.
\end{equation}
Combining \eqref{eq25053002} and \eqref{eq25053003} we obtain that $n$ divides $m$ as required.
\end{proof}

In what follows we shall use $P$ to denote the set of all prime numbers.
\begin{thm}\label{cor:continuum}
The lattice $\mathcal{L}(\mathbf{NF}_3)$ of subvarieties of $\mathbf{NF}_3$ has cardinality of the continuum.
Moreover, it contains both a chain and an antichain of cardinality $2^{\aleph_0}$.
\end{thm}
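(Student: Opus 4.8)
The plan is to build an order embedding of the power-set lattice $(\mathcal{P}(P),\subseteq)$ into $\mathcal{L}(\mathbf{NF}_3)$, using the cycle graph semirings $S_{\bc_p}$ for prime $p$, with Lemma~\ref{Scnm} as the separating device. For a set $\Pi\subseteq P$ of primes put $\mathbf{V}_\Pi:=\mathsf{V}(\{S_{\bc_p}\mid p\in\Pi\})$; since each $S_{\bc_p}$ is a graph semiring, hence a $3$-nilpotent flat semiring, $\mathbf{V}_\Pi$ is a subvariety of $\mathbf{NF}_3$. The key claim to establish is that $\mathbf{V}_{\Pi_1}\subseteq\mathbf{V}_{\Pi_2}$ if and only if $\Pi_1\subseteq\Pi_2$.

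The implication $(\Leftarrow)$ is immediate, since $\mathbf{V}_{\Pi_2}$ is then generated by a larger class. For $(\Rightarrow)$ the idea is to recover $\Pi$ from $\mathbf{V}_\Pi$ by a family of identities. By Lemma~\ref{Scnm}, for a prime $q$ the semiring $S_{\bc_p}$ satisfies $\bc_q\approx x^3$ exactly when $p\nmid q$, which for distinct primes always holds; hence every generator of $\mathbf{V}_\Pi$ satisfies $\bc_q\approx x^3$ whenever $q\notin\Pi$, and so $\mathbf{V}_\Pi$ satisfies $\bc_q\approx x^3$. On the other hand $S_{\bc_q}$ itself fails $\bc_q\approx x^3$ because $q\mid q$, so $\mathbf{V}_\Pi$ fails $\bc_q\approx x^3$ whenever $q\in\Pi$. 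Therefore $\Pi=\{\,q\in P\mid \mathbf{V}_\Pi \text{ does not satisfy } \bc_q\approx x^3\,\}$. Now if $\mathbf{V}_{\Pi_1}\subseteq\mathbf{V}_{\Pi_2}$, then every identity failing in $\mathbf{V}_{\Pi_1}$ fails in $\mathbf{V}_{\Pi_2}$; applying this to $\bc_q\approx x^3$ for $q\in\Pi_1$ gives $\Pi_1\subseteq\Pi_2$. So $\Pi\mapsto\mathbf{V}_\Pi$ is an order embedding of $(\mathcal{P}(P),\subseteq)$ into $\mathcal{L}(\mathbf{NF}_3)$.

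Everything else then follows from standard facts about $(\mathcal{P}(\mathbb{N}),\subseteq)$. Identifying $P$ with $\mathbb{Q}$ via a bijection, the Dedekind-cut sets $\{\,q\in\mathbb{Q}\mid q<r\,\}$ for $r\in\mathbb{R}$ form a chain of cardinality $2^{\aleph_0}$, and the ranges of a fixed choice of injective sequences of rationals converging to the distinct reals form an almost disjoint, hence pairwise incomparable, family of infinite sets of cardinality $2^{\aleph_0}$; pushing these through the embedding yields a chain and an antichain of size $2^{\aleph_0}$ in $\mathcal{L}(\mathbf{NF}_3)$. For the exact cardinality I would add the matching upper bound: every subvariety of $\mathbf{NF}_3$ is determined by a set of ai-semiring identities, and there are only countably many such identities (finite sums of words over a countable alphabet), so $|\mathcal{L}(\mathbf{NF}_3)|\le 2^{\aleph_0}$, and with the chain of size continuum this gives $|\mathcal{L}(\mathbf{NF}_3)|=2^{\aleph_0}$. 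The only point requiring care is the bookkeeping in the $(\Rightarrow)$ direction—checking that $\bc_q\approx x^3$ genuinely holds in every generator of $\mathbf{V}_\Pi$ when $q\notin\Pi$ and genuinely fails in the generator $S_{\bc_q}$—but this is exactly the content of Lemma~\ref{Scnm}, so no real obstacle remains; the continuum-sized chain and antichain inside $\mathcal{P}(\mathbb{N})$ are entirely classical.
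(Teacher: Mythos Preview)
Your proof is correct and follows essentially the same approach as the paper: both build the map $\Pi\mapsto\mathsf{V}(\{S_{\bc_p}\mid p\in\Pi\})$ on subsets of primes and use Lemma~\ref{Scnm} with the identities $\bc_q\approx x^3$ to show it is an order (indeed lattice) embedding of $\mathcal{P}(P)$ into $\mathcal{L}(\mathbf{NF}_3)$. You add two things the paper leaves implicit or outsources to a reference---the explicit Dedekind-cut/almost-disjoint constructions for the chain and antichain, and the upper bound $|\mathcal{L}(\mathbf{NF}_3)|\le 2^{\aleph_0}$ via countability of identities---but these are refinements rather than a different route.
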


\begin{proof}
Let $\mathcal{P}(P)$ denote the power set of $P$.
One can show that the mapping $\varphi: \mathcal{P}(P) \to \mathcal{L}(\mathbf{NF}_3)$ defined by
\[
\varphi(Q)=\mathsf{V}(\{S_{\bc_q}\mid q\in Q\}) \quad (Q\in \mathcal{P}(P)),
\]
is a lattice embedding mapping.
Indeed, let $Q$ a proper subset of $P$, and let $p$ be an element of $P\backslash Q$.
By Lemma~\ref{Scnm} we have that $\bc_p\approx x^3$ is satisfied by every semiring in $\{S_{\bc_q}\mid q\in Q\}$,
but does not hold in $S_{\bc_p}$.
By this observation, we can easily deduce that $\varphi$ is a lattice embedding mapping.
Since $\mathcal{P}(P)$ has the cardinality of the continuum, it follows immediately that $\mathcal{L}(\mathbf{NF}_3)$ inherits this cardinality.
From \cite[Chapter I, \S10]{ku2011} we know that $\mathcal{P}(P)$ has a chain and an antichain of size $2^{\aleph_0}$.
Consequently, $\mathcal{L}(\mathbf{NF}_3)$ inherits this property.
\end{proof}
As a consequence, we obtain the following.
\begin{corollary}
Let $k\geq 4$ be an integer. Then
$\mathbf{NF}_k$ has uncountably many subvarieties.
\end{corollary}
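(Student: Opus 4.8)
The plan is to deduce this corollary directly from Theorem~\ref{cor:continuum} together with part~(3) of Proposition~\ref{fkjidi}. The key point is that for every $k\geq 4$ the variety $\mathbf{NF}_3$ is a subvariety of $\mathbf{NF}_k$, so the lattice $\mathcal{L}(\mathbf{NF}_3)$ embeds as a principal ideal into $\mathcal{L}(\mathbf{NF}_k)$. First I would invoke part~(3) of Proposition~\ref{fkjidi} repeatedly to obtain the chain of inclusions $\mathbf{NF}_3\subseteq \mathbf{NF}_4\subseteq\cdots\subseteq\mathbf{NF}_k$, so that $\mathbf{NF}_3$ is a subvariety of $\mathbf{NF}_k$. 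Consequently every subvariety of $\mathbf{NF}_3$ is a subvariety of $\mathbf{NF}_k$, which gives an injection $\mathcal{L}(\mathbf{NF}_3)\hookrightarrow\mathcal{L}(\mathbf{NF}_k)$.

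Next I would apply Theorem~\ref{cor:continuum}, which tells us that $\mathcal{L}(\mathbf{NF}_3)$ has cardinality of the continuum. Since this family of $2^{\aleph_0}$ pairwise distinct subvarieties of $\mathbf{NF}_3$ consists of pairwise distinct subvarieties of $\mathbf{NF}_k$ as well, it follows that $\mathbf{NF}_k$ has uncountably many subvarieties. If one wishes to extract the exact cardinality, one notes that $\mathcal{L}(\mathbf{NF}_k)$ is itself a set of varieties over a countable language and hence has cardinality at most $2^{\aleph_0}$, so in fact $|\mathcal{L}(\mathbf{NF}_k)|=2^{\aleph_0}$; but for the stated corollary it suffices to observe that there are uncountably many.

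I do not expect any genuine obstacle here: the argument is a one-line consequence of monotonicity of the construction $k\mapsto\mathbf{NF}_k$ and the already-established continuum bound for $k=3$. The only thing to be mildly careful about is that the embedding $\varphi$ from the proof of Theorem~\ref{cor:continuum}, namely $Q\mapsto\mathsf{V}(\{S_{\bc_q}\mid q\in Q\})$, when regarded inside $\mathcal{L}(\mathbf{NF}_k)$ is still injective --- but this is immediate, since the separating identities $\bc_p\approx x^3$ used there continue to distinguish the corresponding varieties inside the larger lattice (a variety either satisfies a given identity or it does not, independently of which ambient lattice we view it in). Hence the proof is simply: $\mathbf{NF}_3\subseteq\mathbf{NF}_k$ by Proposition~\ref{fkjidi}(3), so $\mathcal{L}(\mathbf{NF}_3)$ embeds into $\mathcal{L}(\mathbf{NF}_k)$, and the latter therefore has at least continuum many elements by Theorem~\ref{cor:continuum}.
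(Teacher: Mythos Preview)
Your proposal is correct and matches the paper's approach: the corollary is stated in the paper simply ``as a consequence'' of Theorem~\ref{cor:continuum}, with the inclusion $\mathbf{NF}_3\subseteq\mathbf{NF}_k$ (Proposition~\ref{fkjidi}(3)) being the only other ingredient needed. Your write-up just makes explicit what the paper leaves implicit.
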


Theorem \ref{cor:continuum} established the uncountable cardinality of $\mathcal{L}(\mathbf{NF}_3)$ via semantic methods.
In contrast, Proposition \ref{pro25053101} below provides a syntactic proof using equationally definable subvarieties.

\begin{proposition}\label{pro25053101}
Let $I$ be an arbitrary subset of $P$, and
let $\mathcal{V}_I$ denote the subvariety of $\mathbf{NF}_3$ defined by the identities
\[
\bc_q\approx x^3, \quad q\in I.
\]
Then $\mathcal{V}_{I_1}\neq \mathcal{V}_{I_2}$
whenever $I_1$ and $I_2$ are distinct subsets of $P$.
\end{proposition}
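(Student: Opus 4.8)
The plan is to show that distinct subsets of $P$ yield distinct varieties by exhibiting, for each prime $p$, a single algebra that distinguishes whether $p$ lies in the index set. Specifically, suppose $I_1$ and $I_2$ are distinct subsets of $P$; without loss of generality pick $p \in I_1 \setminus I_2$. I would argue that $\mathcal{V}_{I_1}$ satisfies $\bc_p \approx x^3$ by definition, whereas $\mathcal{V}_{I_2}$ does not, thereby separating the two varieties. To establish the failure of $\bc_p \approx x^3$ in $\mathcal{V}_{I_2}$, the natural witness is the cycle graph semiring $S_{\bc_p}$: I would show that $S_{\bc_p} \in \mathcal{V}_{I_2}$ but $S_{\bc_p}$ does not satisfy $\bc_p \approx x^3$.

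The second half is immediate from Lemma~\ref{Scnm}: since $p$ divides $p$, the semiring $S_{\bc_p}$ fails $\bc_p \approx x^3$. The real content is showing $S_{\bc_p} \in \mathcal{V}_{I_2}$, i.e.\ that $S_{\bc_p}$ satisfies every defining identity $\bc_q \approx x^3$ with $q \in I_2$. For each such $q$ we have $q \neq p$ (as $p \notin I_2$), and since both are primes, $p \nmid q$; hence by Lemma~\ref{Scnm}, $S_{\bc_p}$ satisfies $\bc_q \approx x^3$. Since $S_{\bc_p}$ is a $3$-nilpotent flat semiring it lies in $\mathbf{NF}_3$, and together with the identities just verified this gives $S_{\bc_p} \in \mathcal{V}_{I_2}$. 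Combining the two halves, $\bc_p \approx x^3$ holds throughout $\mathcal{V}_{I_1}$ but fails in the member $S_{\bc_p}$ of $\mathcal{V}_{I_2}$, so $\mathcal{V}_{I_1} \neq \mathcal{V}_{I_2}$.

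The only subtlety worth checking carefully is the direction $q \in I_2 \Rightarrow S_{\bc_p} \models \bc_q \approx x^3$, which relies on the hypothesis that $I_2 \subseteq P$ consists of primes, so that $p \neq q$ forces $p \nmid q$ — this is exactly where primality is used, and it is the analogue of the observation already made in the proof of Theorem~\ref{cor:continuum}. I do not anticipate any genuine obstacle here; the statement is essentially a restatement of the injectivity of the map $\varphi$ from Theorem~\ref{cor:continuum}, now phrased for the equationally defined varieties $\mathcal{V}_I$ rather than the generated varieties $\mathsf{V}(\{S_{\bc_q} \mid q \in Q\})$, and the same separating identities do the job.
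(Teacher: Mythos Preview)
Your proposal is correct and follows essentially the same approach as the paper: pick a prime $p\in I_1\setminus I_2$, use Lemma~\ref{Scnm} together with primality to show that $S_{\bc_p}$ lies in $\mathcal{V}_{I_2}$ (since $p\nmid q$ for every $q\in I_2$) but not in $\mathcal{V}_{I_1}$ (since $S_{\bc_p}$ fails $\bc_p\approx x^3$). The paper's proof is terser, simply asserting the membership and non-membership via Lemma~\ref{Scnm}, while you spell out both the identity-level and the algebra-level separation; the underlying argument is the same.
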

\begin{proof}
Let $I_1$ and $I_2$ are distinct subsets of $P$.
Without loss of generality, we may assume that $I_1$ is not contained in $I_2$.
Then there exists an element $q$ of $I_1$ that is not in $I_2$.
By virtue of Lemma~\ref{Scnm}, we deduce that $S_{\bc_q}$ is a member of $\mathcal{V}_{I_2}$,
but does not lie in $\mathcal{V}_{I_1}$.
Thus $\mathcal{V}_{I_1} \neq \mathcal{V}_{I_2}$.
\end{proof}

\begin{corollary}
Let $I$ be a subset of $P$.
Then $\mathcal{V}_I$ is nonfinitely based if and only if $I$ is infinite.
In this case, $\mathcal{V}_I$ has an irredundant equational basis.
\end{corollary}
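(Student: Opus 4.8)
The plan is to reduce everything to the finite basability of $\mathbf{NF}_3$ (Proposition~\ref{fkjidi}(2)), a routine compactness argument in equational logic, and the divisibility criterion of Lemma~\ref{Scnm}. First I would fix a finite equational basis $\Sigma_0$ of $\mathbf{NF}_3$, so that $\mathcal{V}_I=\Mod\bigl(\Sigma_0\cup\{\bc_q\approx x^3\mid q\in I\}\bigr)$ for every $I\subseteq P$. The easy half is then immediate: if $I$ is finite, this displays a finite basis of $\mathcal{V}_I$, so $\mathcal{V}_I$ is finitely based.

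For the converse, suppose $I$ is infinite and, toward a contradiction, that $\Sigma$ is a finite basis of $\mathcal{V}_I$. Since each identity of $\Sigma$ is a consequence of $\Sigma_0\cup\{\bc_q\approx x^3\mid q\in I\}$ and every equational deduction uses only finitely many hypotheses, there is a finite $I_0\subseteq I$ (a union over the finitely many identities of $\Sigma$) such that $\Sigma$ is already a consequence of $\Sigma_0\cup\{\bc_q\approx x^3\mid q\in I_0\}$; hence $\mathcal{V}_{I_0}\subseteq\mathcal{V}_I$, and as the reverse inclusion is obvious, $\mathcal{V}_{I_0}=\mathcal{V}_I$. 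But $I_0\neq I$ because $I$ is infinite, and this contradicts Proposition~\ref{pro25053101}. So $\mathcal{V}_I$ is nonfinitely based exactly when $I$ is infinite.

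For the last assertion I would take the basis $\Sigma:=\Sigma_0\cup\{\bc_q\approx x^3\mid q\in I\}$ itself and check it is irredundant, i.e.\ that deleting any single identity strictly enlarges the model class. Deleting $\bc_q\approx x^3$ with $q\in I$ leaves a defining set for $\mathcal{V}_{I\setminus\{q\}}$, and $S_{\bc_q}$ separates the two varieties: for a prime $p\neq q$ we have $q\nmid p$, so $S_{\bc_q}$ satisfies $\bc_p\approx x^3$ by Lemma~\ref{Scnm}, whereas $q\mid q$ shows $S_{\bc_q}$ fails $\bc_q\approx x^3$. Deleting some $\sigma\in\Sigma_0$ is the one point needing care: if the remaining set still defined $\mathcal{V}_I$, then by the same compactness step it would define $\mathcal{V}_I$ using only cycle identities indexed by some finite $I_0\subseteq I$; picking a prime $q^{*}\in I\setminus I_0$, the $3$-nilpotent flat semiring $S_{\bc_{q^{*}}}$ lies in $\mathbf{NF}_3=\Mod(\Sigma_0)$ and hence satisfies $\Sigma_0\setminus\{\sigma\}$, and it satisfies every $\bc_q\approx x^3$ with $q\in I_0$ (distinct primes), so $S_{\bc_{q^{*}}}\in\mathcal{V}_I$ — contradicting that $S_{\bc_{q^{*}}}$ fails $\bc_{q^{*}}\approx x^3$. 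I expect this last step — showing that adjoining the cycle identities does not render any of the finitely many axioms of $\mathbf{NF}_3$ dispensable — to be the main obstacle, but the compactness-plus-witness argument just sketched handles it. (When $I$ is finite an irredundant basis exists trivially by thinning out a finite basis; the content of the corollary is the infinite case, where a nonfinitely based variety is nonetheless shown to admit an irredundant basis.)
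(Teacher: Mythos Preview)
Your argument for the equivalence ``$\mathcal{V}_I$ is nonfinitely based $\iff$ $I$ is infinite'' is correct and matches the paper's proof essentially verbatim: finitely based when $I$ is finite by Proposition~\ref{fkjidi}(2), and nonfinitely based when $I$ is infinite by compactness plus Proposition~\ref{pro25053101}.

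For the irredundant-basis claim the paper takes a lighter route than you do: it only asserts that $\{\bc_q\approx x^3\mid q\in I\}$ is an irredundant basis of $\mathcal{V}_I$ \emph{within} $\mathbf{NF}_3$, and this follows immediately from Proposition~\ref{pro25053101} (dropping any one $\bc_q\approx x^3$ yields $\mathcal{V}_{I\setminus\{q\}}\neq\mathcal{V}_I$). You instead attempt the stronger absolute statement that $\Sigma_0\cup\{\bc_q\approx x^3\mid q\in I\}$ is irredundant. Your treatment of the cycle identities is fine, but the $\Sigma_0$ step has a genuine gap. From the hypothesis that $(\Sigma_0\setminus\{\sigma\})\cup\{\bc_q\approx x^3\mid q\in I\}$ still axiomatizes $\mathcal{V}_I$, compactness only tells you that $\sigma$ is derivable from $(\Sigma_0\setminus\{\sigma\})\cup\{\bc_q\approx x^3\mid q\in I_0\}$ for some finite $I_0$; it does \emph{not} say this finite set axiomatizes $\mathcal{V}_I$ --- indeed it cannot, since you have just shown $\mathcal{V}_I$ is nonfinitely based. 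Hence your conclusion ``$S_{\bc_{q^{*}}}\in\mathcal{V}_I$'' does not follow: $S_{\bc_{q^{*}}}$ only lands in $\mathcal{V}_{I_0}\supsetneq\mathcal{V}_I$, and there is no contradiction.

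The repair is simpler than the argument you sketch. Whether a given $\sigma\in\Sigma_0$ becomes redundant after adjoining the cycle identities depends on the particular $\Sigma_0$ chosen, and there is no reason it should never happen. But since $\Sigma_0$ is finite, one can discard redundant members of $\Sigma_0$ one at a time until none remain; at every stage the cycle identities stay irredundant (your witness $S_{\bc_q}$ satisfies all of $\Sigma_0$, hence any subset of it, together with the remaining cycle identities), and the process terminates with an irredundant absolute basis. Alternatively, adopt the paper's relative reading and the issue disappears.
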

\begin{proof}
If $I$ is finite, then it follows from Proposition \ref{fkjidi}(ii) that $\mathcal{V}_I$ is finitely based.
Now let $I$ be an infinite subset of $P$. Suppose by way of contradiction that $\mathcal{V}_I$ is finitely based.
By the Compactness Theorem for Equational Logic~\cite[Exercise 10 in \S14]{bs},
it follows that $\mathcal{V}_I=\mathcal{V}_J$ for some finite subset $J$ of $I$,
which contradicts Proposition~\ref{pro25053101}.
So $\mathcal{V}_I$ is nonfinitely based.
Moreover, Proposition~\ref{pro25053101} also implies that $\{\bc_q\approx x^3\mid q\in I\}$
is an irredundant equational basis of $\mathcal{V}_I$ within $\mathbf{NF}_3$.
\end{proof}

\section{The Cross subvarieties of $\mathbf{NF}_3$}
In this section we show that every finitely generated subvariety of $\mathbf{NF}_3$ is a Cross variety.
Recall that an algebra $D$ is a \emph{divisor} of an algebra $A$ is $D$ is a homomorphic image of
a subalgebra $B$ of $A$. The divisor $D$ is \emph{proper} if either $B$  is a proper
subalgebra of $A$ or the homomorphism from $B$ onto $D$ is not an isomorphism.
A finite algebra is \emph{critical} if it does not lie in the variety generated by all of its proper divisors.
It is easily verified that every critical algebra is subdirectly irreducible.
A \emph{Cross} variety is a variety that is locally finite, finitely based, and has finitely many critical algebras.
By \cite[Theorem 2.1]{mv}, a variety is a Cross variety if and only if it is finitely generated, finitely based,
and has finitely many subvarieties.
From~\cite[Proposition 1.4.35]{sapir14} we know that
every subvariety of a Cross variety is a Cross variety.
Let $\mathcal{V}_1$ and $\mathcal{V}_2$ be varieties. We write $\mathcal{V}_1 < \mathcal{V}_2$
if $\mathcal{V}_1$ is a proper subvariety of $\mathcal{V}_2$.

The following lemma establishes key relationships among varieties generated by graph semirings.
These relationships will be used repeatedly throughout this section to analyze the structure
of subvarieties of $\mathbf{NF}_3$.
%
%
%
%
%

\begin{lemma}\label{scnsln1}
Let $k\geq 1$ and $m,n\geq 2$ be integers. Then
\begin{enumerate}[$(\rm i)$]
\item \label{eq250602011}
$\mathsf{V}(S_{\bp_{n}})<\mathsf{V}(S_{\bp_{n}}\circ S_{\bp_n})
=\mathsf{V}\left(\bigcup_{1\leq i\leq m}^{\omega}S_{\bp_{n}}\right)$,

\item \label{eq250602012}
$\mathsf{V}(S_{\bp_{n}}\circ S_{\bp_n})<\mathsf{V}(S_{\bp_{n+1}})
=\mathsf{V}\left(S_{\bp_{n+1}}\circ \bigcup_{1\leq i\leq m}^{\omega}S_{\bp_{n}}\right)$,

\item\label{eq25060202}
$\mathsf{V}(S_{\bc_1})<\mathsf{V}(S_{\bc_1}\circ S_{\bc_1})
=\mathsf{V}\left(\bigcup_{1\leq i\leq m}^{\omega}S_{\bc_1}\right)$,

\item \label{eq25060203}
$\mathsf{V}\left(\bigcup_{1\leq i\leq m}^{\omega}S_{\bc_n}\right)
=\mathsf{V}(S_{\bc_n})$,

\item \label{eq26216}
$\bigcup_{1\leq i\leq m}^{\omega}S_{\bp_n} \in \mathsf{V}(S_{\bc_k}\circ S_{\bp_n}).$
\end{enumerate}
\end{lemma}

\begin{proof}
(i) Since $S_{\bp_{n}}$ is a subalgebra of $S_{\bp_{n}}\circ S_{\bp_{n}}$,
it follows that $\mathsf{V}(S_{\bp_{n}})$ is a subvariety of $\mathsf{V}(S_{\bp_{n}}\circ S_{\bp_n})$.
Let
\begin{equation}\label{id25060820}
\bp_n(x_1, \ldots, x_n)+\bp_n(y_1, \ldots, y_n)\approx \bp_n(x_1, \ldots, x_{n-1},y_n)+\bp_n(y_1, \ldots, y_{n-1},x_n)
\end{equation}
be an ai-semiring identity.
It is easy to check that the identity~\eqref{id25060820} is satisfied by $S_{\bp_{n}}$,
but does not hold in $S_{\bp_{n}}\circ S_{\bp_n}$.
So $\mathsf{V}(S_{\bp_{n}})$ is a proper subvariety of $\mathsf{V}(S_{\bp_{n}}\circ S_{\bp_n})$.

It is obvious that
$\mathsf{V}(S_{\bp_{n}}\circ S_{\bp_n})$ is a subvariety of $\mathsf{V}\left(\bigcup_{1\leq i\leq m}^{\omega}S_{\bp_{n}}\right)$,
since $S_{\bp_{n}}\circ S_{\bp_n}$ is a subalgebra of $\bigcup_{1\leq i\leq m}^{\omega}S_{\bp_{n}}$.
To establish the reverse inclusion, we need to show that $\bigcup_{1\leq i\leq m}^{\omega}S_{\bp_{n}}$ lies in $\mathsf{V}(S_{\bp_{n}}\circ S_{\bp_n})$.
Let $\{0, \omega, a_1, \ldots, a_n\}$ and $\{0, \omega, b_1, \ldots, b_n\}$ be copies of $S_{\bp_n}$.
Then their $\{0, \omega\}$-direct union $\{0, \omega, a_1, b_1, \ldots, a_n, b_n\}$,
denoted by $T$, is isomorphic to $S_{\bp_{n}}\circ S_{\bp_n}$.

The idea is to simulate $m$ disjoint copies of $S_{\bp_n}$ inside the direct power $T^m$.
We use the generators $(a_i, b_i, \ldots, b_i)$ and their cyclic shifts to ensure that each copy of $S_{\bp_n}$ occupies a different coordinate position.
Specifically, for each $j$, the $j$-th cyclic shift $\sigma^j(a_i,b_i,\ldots,b_i)$ has its non-zero entries concentrated in the $j$-th coordinate (where it behaves like $a_i$) and the other coordinates (where it behaves like $b_i$) are structured so that products between different shifts vanish.
The subalgebra generated by these elements then contains an isomorphic copy of each $S_{\bp_n}$ in each coordinate, but also contains additional elements.
Taking the quotient by the ideal $J$ of elements with a zero coordinate collapses these extra elements and yields precisely the $\{0,\omega\}$-direct union of $m$ copies of $S_{\bp_n}$.

To realize this construction formally, let $A$ denote the subalgebra of the direct product $T^m$ of $m$ copies of $T$ generated by the elements
\[
\sigma^j(a_i,b_i, \ldots, b_i),~1\leq i\leq n,~ 1\leq j\leq m,
\]
where $\sigma: T^m \to T^m$ is the right cyclic shift mapping defined by
\[
\sigma(x_1, x_2, \ldots, x_m)=(x_m, x_1, \ldots, x_{m-1}).
\]
If we denote by $J$ the set of all elements of $A$ with a $0$-coordinate,
then $J$ is both a multiplicative ideal and an order-theoretic filter of $A$.
Now it is a routine matter to verify that the quotient algebra $A/J$ is isomorphic to $\bigcup_{1\leq i\leq m}^{\omega} S_{\bp_n}$.
Thus
$\mathsf{V}(S_{\bp_{n}}\circ S_{\bp_n})=\mathsf{V}\left(\bigcup_{1\leq i\leq m}^{\omega}S_{\bp_{n}}\right)$.

(ii) Suppose that $S_{\bp_{n+1}}=\{0, \omega, a_1, \ldots, a_{n+1}\}$.
We now show that $S_{\bp_{n+1}}\circ \bigcup_{1\leq i\leq m}^{\omega}S_{\bp_n}$ belongs to $\mathsf{V}(S_{\bp_{n+1}})$.
The construction uses diagonal elements to simulate $S_{\bp_{n+1}}$ and cyclically shifted tuples to simulate the $m$ copies of $S_{\bp_n}$.

Formally, let $A$ be the subalgebra of the direct product $S_{\bp_{n+1}}^{m}$ of $m$ copies of $S_{\bp_{n+1}}$
generated by
\[
(a_i,a_i,\ldots,a_i),~1\leq i\leq n+1,
\]
\[
\sigma^j(a_i,a_{i+1}, \ldots, a_{i+1}),~1\leq i\leq n,~ 1\leq j\leq m,
\]
where $\sigma$ is the right cyclic shift mapping defined by $S_{\bp_{n+1}}^{m}$ onto itself.
Let $J$ denote the set of all elements of $A$ with a $0$-coordinate.
Then $J$ is both a multiplicative ideal and an order-theoretic filter of $A$.
It is easy to verify that $A/J$ is isomorphic to
$S_{\bp_{n+1}}\circ \bigcup_{1\leq i\leq m}^{\omega}S_{\bp_n}$.
Thus
\[
\textstyle\mathsf{V}\left(S_{\bp_{n+1}}\circ \bigcup_{1\leq i\leq m}^{\omega}S_{\bp_n}\right)=\mathsf{V}(S_{\bp_{n+1}}).
\]
In particular, $\mathsf{V}(S_{\bp_n}\circ S_{\bp_n})$ is a subvariety of $\mathsf{V}(S_{\bp_{n+1}})$.
Notice that the identity $\bp_{n+1}\approx x^3$ is satisfied by $S_{\bp_n}\circ S_{\bp_n}$,
but does not hold in $S_{\bp_{n+1}}$.
Hence $\mathsf{V}(S_{\bp_n}\circ S_{\bp_n})$ is a proper subvariety of $\mathsf{V}(S_{\bp_{n+1}})$.

(iii) We know that $S_{\bc_1}$ is a subalgebra of $S_{\bc_1}\circ S_{\bc_1}$.
So $\mathsf{V}(S_{\bc_1})$ is a subvariety of $\mathsf{V}(S_{\bc_1}\circ S_{\bc_1})$.
It is easy to check that the identity $x^2+xy\approx x^2+y^2$ is satisfied by $S_{\bc_1}$,
but does not hold in $S_{\bc_1}\circ S_{\bc_1}$.
Thus $\mathsf{V}(S_{\bc_1})$ is a proper subvariety of $\mathsf{V}(S_{\bc_1}\circ S_{\bc_1})$.

To show that $\mathsf{V}(S_{\bc_1}\circ S_{\bc_1})=\mathsf{V}\left(\bigcup_{1\leq i\leq m}^{\omega}S_{\bc_1}\right)$,
it suffices to prove that $\bigcup_{1\leq i\leq m}^{\omega}S_{\bc_1}$ lies in $\mathsf{V}(S_{\bc_1}\circ S_{\bc_1})$.
Let $\{0, \omega, a\}$ and $\{0, \omega, b\}$ be copies of $S_{\bc_1}$.
Then $\{0, \omega, a, b\}$ is isomorphic to $S_{\bc_1}\circ S_{\bc_1}$.

Consider the cyclic shift construction. Formally, let $A$ denote the subalgebra of the direct product $S_{\bc_1}^m$ of $m$ copies of $S_{\bc_1}$ generated by the elements
\[
\sigma^j(a,b, \ldots, b),~ 1\leq j\leq m,
\]
where $\sigma$ is the right cyclic shift mapping from $S_{\bc_1}^m$ to itself.
If we denote by $J$ the set of all elements of $A$ with a $0$-coordinate,
then $J$ is both a multiplicative ideal and an order-theoretic filter of $A$.
Now it is easy to see that $A/J$ is isomorphic to $\bigcup_{1\leq i\leq m}^{\omega} S_{\bc_1}$.
Hence $\mathsf{V}(S_{\bc_1}\circ S_{\bc_1}) =\mathsf{V}(\bigcup_{1\leq i\leq m}^{\omega}S_{\bc_{1}})$,
and so \eqref{eq25060202} holds.

(iv)
Write $S_{\bc_n}=\{0, \omega, a_1, \ldots, a_n\}$.
Here we embed the direct union into a power of $S_{\bc_n}$ via cyclic shifts.

Let $A$ be the subalgebra of the direct product $S_{\bc_n}^m$ of $m$ copies of $S_{\bc_n}$ generated by the elements
\[
\sigma^j(a_i, a_{i+1}, \ldots, a_{i+1}), ~1\leq i\leq n,~ 1\leq j\leq m,
\]
where $\sigma$ is the right cyclic shift mapping from $S_{\bc_n}^m$ onto itself.
If $J$ denotes the elements of $A$ with a $0$-coordinate, then
the quotient algebra $A/J$ is isomorphic to $\bigcup_{1\leq i\leq m}^{\omega} S_{\bc_n}$.
So $\mathsf{V}(S_{\bc_{n}})=\mathsf{V}(\bigcup_{1\leq i\leq m}^{\omega}S_{\bc_{n}})$.

(v)
Write $S_{\bc_k}=\{0,\omega,a_1,\ldots,a_k\}$ and $S_{\bp_n}=\{0,\omega,b_1,\ldots,b_n\}$.
Let $T$ be the $\{0,\omega\}$-direct union of $S_{\bc_k}$ and $S_{\bp_n}$, and let $A$ be the subalgebra of the direct product $T^m$ of $m$ copies of $T$ generated by the elements
\[
\sigma^j(a_i, b_i,\ldots,b_i), ~1\leq i\leq n,~ 1\leq j\leq m,
\]
where $\sigma$ is the right cyclic shift mapping from $T^m$ onto itself and the subscript $i$ of $a_i$ is taken modulo $k$.
If $J$ denotes the elements of $A$ with a $0$-coordinate, then
the quotient algebra $A/J$ is isomorphic to $\bigcup_{1\leq i\leq m}^{\omega} S_{\bp_n}$.
So $\bigcup_{1\leq i\leq m}^{\omega} S_{\bp_n}\in \mathsf{V}(S_{\bc_{k}}\circ S_{\bp_n})$.
\end{proof}

\begin{proposition}\label{scnsln11}
Let $I$ and $J$ be sets \up(possibly empty\up) of positive integers with $1\notin J$, and let $n\geq 2$ be a positive integer.
\begin{enumerate}[$(\rm i)$]
\item
If $I$ does not contain $1$, then
\[
\textstyle\mathsf{V}\left(\bigcup_{1\leq k\leq n}^{\omega}\left(\bigcup_{i\in I}^{\omega}
S_{\mathbf{c}_i}\circ \bigcup_{j\in J}^{\omega} S_{\bp_i}
 \right)\right)
=\mathsf{V}\left(\bigcup_{i\in I}^{\omega} S_{\mathbf{c}_i}\circ \bigcup_{j\in J}^{\omega} S_{\bp_i}\circ \bigcup_{j\in J}^{\omega} S_{\bp_i}\right).
\]
\item If $I$ contains $1$, then
\[
\textstyle\mathsf{V}\left(\bigcup_{1\leq k\leq n}^{\omega}\left(\bigcup_{i\in I}^{\omega} S_{\mathbf{c}_i}\circ \bigcup_{j\in J}^{\omega} S_{\bp_i}\right)\right)=\mathsf{V}\left(S_{\bc_1}\circ\bigcup_{i\in I}^{\omega} S_{\mathbf{c}_i}\circ \bigcup_{j\in J}^{\omega} S_{\bp_i}\right).
\]
\end{enumerate}
\end{proposition}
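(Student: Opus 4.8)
The plan is to imitate the constructions already carried out in Lemma~\ref{scnsln1}, which handle the ``prototype'' cases $\mathsf{V}(S_{\bp_n}\circ S_{\bp_n})=\mathsf{V}(\bigcup_{1\le i\le m}^{\omega}S_{\bp_n})$, $\mathsf{V}(S_{\bc_1}\circ S_{\bc_1})=\mathsf{V}(\bigcup_{1\le i\le m}^{\omega}S_{\bc_1})$, and $\mathsf{V}(S_{\bc_n})=\mathsf{V}(\bigcup_{1\le i\le m}^{\omega}S_{\bc_n})$. In each part, one inclusion is immediate from the fact that a $\{0,\omega\}$-direct union of subalgebras embeds in the $\{0,\omega\}$-direct union of the ambient algebras: writing $T=\bigcup_{i\in I}^{\omega}S_{\mathbf c_i}\circ\bigcup_{j\in J}^{\omega}S_{\bp_j}$ (respectively $T'$ for the right-hand generator in each part), the single algebra $T'$ is a subalgebra of $\bigcup_{1\le k\le n}^{\omega}T$ by Proposition~\ref{pro25061320}, so $\mathsf{V}(T')\subseteq\mathsf{V}(\bigcup_{1\le k\le n}^{\omega}T)$. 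The substance is the reverse inclusion: one must show that $\bigcup_{1\le k\le n}^{\omega}T$ already lies in $\mathsf{V}(T')$.

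For the reverse inclusion I would, exactly as in Lemma~\ref{scnsln1}, realize $\bigcup_{1\le k\le n}^{\omega}T$ as a quotient of a subalgebra of a finite direct power $(T')^{n}$. Concretely, enumerate the non-$0$, non-$\omega$ generators of $T$ and build, inside $(T')^{n}$, the subalgebra $A$ generated by the ``cyclically shifted'' tuples $\sigma^k(\text{(generator placed in coordinate }1\text{, padded by the extra }\bp_j\text{-copy elsewhere))}$, where $\sigma$ is the right cyclic shift on $(T')^{n}$; here the crucial point, already exploited in the lemma, is that $T'$ contains \emph{two} disjoint copies of each path block $S_{\bp_j}$ (and, in part~(2), an extra copy of $S_{\bc_1}$), so that in the off-coordinates one can place matching elements of the second copy and keep products nonzero. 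Then the set $J$ of all elements of $A$ having a $0$ in some coordinate is simultaneously a multiplicative ideal and an order-filter of $A$, and collapsing it yields $A/J\cong\bigcup_{1\le k\le n}^{\omega}T$. The split into case~(1) ($1\notin I$) and case~(2) ($1\in I$) is forced precisely because $S_{\bc_1}$ is the loop $S(a^2)$, which is idempotent-generated in a degenerate way: the right-hand generator must carry the extra $S_{\bc_1}$-summand rather than a doubled path-summand, so the padding tuples for the $\bc_1$-vertex are shaped differently — this is the analogue of the distinction between \eqref{eq25060202} and \eqref{eq25060203} in Lemma~\ref{scnsln1}.

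I expect the main obstacle to be purely bookkeeping: verifying that the generating set of $A$ is closed enough that $A/J$ has \emph{exactly} the right Cayley table, i.e. that no unintended nonzero products appear across the $n$ copies and that the $n$ copies of $T$ in the quotient do share the single annihilator $\omega$ and nothing else. This amounts to checking, coordinate by coordinate, that a product $\sigma^{k}(g)\cdot\sigma^{k'}(h)$ of two generating tuples has a $0$-coordinate whenever $g$ and $h$ should not multiply to $\omega$ in the target algebra, and has value $\omega$ in every coordinate exactly when they should — the shift $\sigma$ is chosen so that two generators ``aligned to the same copy $k$'' meet in coordinate $k$ and are padded elsewhere by the spare copy, so their product is $\omega$ there and $0$ in the other coordinates, landing in the correct $\{0,\omega\}$ after quotienting. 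Since Lemma~\ref{scnsln1} has already validated this pattern for the building blocks, the present proposition follows by assembling those verifications over the index sets $I$ and $J$, and I would present it at that level of detail rather than re-deriving each Cayley-table entry.
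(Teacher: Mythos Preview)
Your proposal is correct and takes essentially the same approach as the paper: the paper's proof is in fact the single sentence ``One can obtain this result by modifying $T$ appropriately in the proof of Lemma~\ref{scnsln1},'' and your plan fleshes out precisely that modification---build $A\subseteq (T')^{n}$ from cyclically shifted generators, collapse the ideal of tuples with a $0$-coordinate, and read off $\bigcup_{1\le k\le n}^{\omega}T$---so there is nothing to add.
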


\begin{proof}
One can obtain this result by modifying $T$ appropriately in the proof of Lemma~\ref{scnsln1}.
\end{proof}

For each integer $n \geq 2$, let $\mathcal{V}_n$ denote the subvariety of $\mathbf{NF}_3$
defined by the identity
\begin{equation}\label{id26060101}
\bp_n\approx \bc_n.
\end{equation}
Before stating the next lemma, we recall that the \emph{length} of a path (or cycle) in a graph is the number of edges it contains. Thus for each $n\geq 2$ and $m\geq 1$, the path $\bp_n$ has length $n-1$ while the cycle $\bc_m$ has length $m$.
\begin{lemma}\label{lem25061422}
Let ${\mathbb G}$ be a graph. If the graph semiring $S_{\mathbb G}$ is a member of $\mathcal{V}_n$,
then
\begin{enumerate}[$(\rm i)$]
\item every path of ${\mathbb G}$ has length at most $n-2$;

\item every cycle of ${\mathbb G}$ has length dividing $n$.
\end{enumerate}
\end{lemma}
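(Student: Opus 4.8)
The plan is to first reduce the hypothesis $S_{\mathbb G}\in\mathcal{V}_n$ to a purely combinatorial condition on the walks of $\mathbb G$, and then read off $(a)$ and $(b)$ from the structural fact that in $\mathbb G$ every vertex has out-degree at most $1$. For the reduction, I would evaluate both sides of the identity \eqref{id26060101} in $S_{\mathbb G}$ under an arbitrary substitution $\varphi$ of $x_1,\dots,x_n$ into $S_{\mathbb G}$. Since $S_{\mathbb G}$ is $3$-nilpotent, every monomial occurring in $\bp_n$ or $\bc_n$ has length $2$ and hence evaluates into $\{0,\omega\}$, with $\varphi(x_ix_{i+1})=\omega$ exactly when $(\varphi(x_i),\varphi(x_{i+1}))\in E(\mathbb G)$ (forcing $\varphi(x_i),\varphi(x_{i+1})\in V(\mathbb G)$) and $\varphi(x_ix_{i+1})=0$ otherwise. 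As $0$ is the top element of the additive semilattice of $S_{\mathbb G}$, a sum of elements of $\{0,\omega\}$ equals $\omega$ precisely when all summands equal $\omega$, and equals $0$ otherwise. Thus $\varphi(\bp_n)=\omega$ iff $\varphi(x_1)\to\varphi(x_2)\to\cdots\to\varphi(x_n)$ is a walk in $\mathbb G$, and $\varphi(\bc_n)=\omega$ iff in addition $(\varphi(x_n),\varphi(x_1))\in E(\mathbb G)$; in all other cases the respective value is $0$. Since the monomials of $\bp_n$ form a subset of those of $\bc_n$, we always have $\varphi(\bc_n)=\omega\Rightarrow\varphi(\bp_n)=\omega$, so $S_{\mathbb G}\models\bp_n\approx\bc_n$ if and only if \emph{every} walk $v_1\to v_2\to\cdots\to v_n$ of length $n-1$ in $\mathbb G$ satisfies $(v_n,v_1)\in E(\mathbb G)$. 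Since any walk lies entirely within one connected component, it suffices to test this ``closing-up'' property on path components and cycle components separately.

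For $(a)$, suppose toward a contradiction that $\mathbb G$ has a path component $a_1\to a_2\to\cdots\to a_m$ with $m\ge n$, i.e.\ of length at least $n-1$. Then $a_1\to a_2\to\cdots\to a_n$ is a walk of length $n-1$, so the closing-up property forces $(a_n,a_1)\in E(\mathbb G)$. But $a_n$ has out-degree at most $1$, and its only possible out-edge is $(a_n,a_{n+1})$ with $a_{n+1}\ne a_1$ since the vertices of a path component are pairwise distinct (and if $m=n$ then $a_n$ has no out-edge at all). Either way $(a_n,a_1)\notin E(\mathbb G)$, a contradiction. Hence every path component has at most $n-1$ vertices, that is, length at most $n-2$.

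For $(b)$, let $a_0\to a_1\to\cdots\to a_{\ell-1}\to a_0$ be a cycle component of length $\ell$, with subscripts read modulo $\ell$. Then $a_0\to a_1\to\cdots\to a_{n-1}$ (subscripts mod $\ell$) is a walk of length $n-1$, so the closing-up property yields $(a_{\,n-1\bmod\ell},a_0)\in E(\mathbb G)$. In a cycle component every vertex has out-degree exactly $1$, and the out-edge of $a_{\,n-1\bmod\ell}$ goes to $a_{\,n\bmod\ell}$; hence $a_{\,n\bmod\ell}=a_0$, i.e.\ $n\equiv 0\pmod\ell$, so $\ell$ divides $n$.

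The only real work is the first step: faithfully tracking how products and sums evaluate in a flat semiring, in particular that an additively ``mixed'' sum collapses to the top element $0$ and that $3$-nilpotency annihilates every monomial, together with the observation that a walk cannot leave a connected component. Once the closing-up reformulation is in hand, parts $(a)$ and $(b)$ follow immediately from the out-degree-at-most-$1$ hypothesis; this reformulation is in the same spirit as Lemma~\ref{Scnm}. I do not expect any further obstacle.
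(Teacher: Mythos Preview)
Your proof is correct and follows essentially the same approach as the paper: both arguments pick a substitution (equivalently, a walk in $\mathbb G$) that makes $\bp_n$ evaluate to $\omega$ and then observe that $\bc_n$ evaluates to $0$, contradicting~\eqref{id26060101}. Your packaging via the ``closing-up'' characterization is slightly more uniform---it handles part~$(b)$ by a direct modular computation, whereas the paper passes to the subalgebra $S_{\bc_m}$ and invokes Lemma~\ref{Scnm}---but the underlying substitutions are the same.
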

\begin{proof}
(i) Suppose on the contrary that $\mathbb{G}$ contains a path of length $n-1$.
Then $S_{\bp_n}$ is a subalgebra of $S_{\mathbb{G}}$, and so $S_{\bp_n}$ belongs to $\mathcal{V}_n$.
This implies that $S_{\bp_n}$ satisfies the identity~\eqref{id26060101}.
Consider the substitution $\varphi \colon \{x_1,\ldots,x_n\} \to S_{\bp_n}$ defined by
$\varphi(x_i) = a_i$ for each $1 \leq i \leq n$.
Under this substitution, we have that $\varphi(\bp_n) = \omega$ but $\varphi(\bc_n) = 0$, which leads to a contradiction.
Therefore, every path in $\mathbb{G}$ has length at most $n-2$.

(ii) Assume that $\mathbb{G}$ contains a cycle of length $m$, where $m$ does not divide $n$.
Then $S_{\bc_m}$ is a subalgebra of $S_{\mathbb{G}}$, and so $S_{\bc_m}$ satisfies the identity~\eqref{id26060101}.
By Lemma~\ref{Scnm}, the algebra $S_{\bc_m}$ satisfies the identity $\bc_n \approx x^3$.
Consequently, $S_{\bc_m}$ satisfies the identity $\bp_n \approx x^3$.
Define the substitution $\varphi \colon \{x_1,\ldots,x_n\} \to S_{\bc_m}$ by setting $\varphi(x_i) = a_k$, where $1 \leq i \leq n$, $1 \leq k \leq m$, and $i \equiv k \pmod{m}$.
This substitution yields $\varphi(\bp_n) = \omega$ while $\varphi(x^3) = 0$, again resulting in a contradiction.
We conclude that every cycle in $\mathbb{G}$ must have length dividing $n$.
\end{proof}

\begin{lemma}\label{lemma250614001}
The variety $\mathcal{V}_n$ is generated by the finite flat semiring
\[
\textstyle S_{\bc_1}\circ \bigcup_{k\mid n}^{\omega}S_{\bc_k}
\]
for all $n \geq 2$.
\end{lemma}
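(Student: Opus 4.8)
The plan is to prove the two inclusions separately. Write $T_n = S_{\bc_1}\circ \bigcup_{k\mid n}^{\omega}S_{\bc_k}$ for the candidate generator. The first inclusion, $\mathsf{V}(T_n)\subseteq\mathcal{V}_n$, is the routine direction: it suffices to check that $T_n$ itself satisfies the defining identity~\eqref{id26060101}, i.e. $\bp_n\approx\bc_n$. Since $T_n$ is a $\{0,\omega\}$-direct union of cycle graph semirings $S_{\bc_k}$ with $k\mid n$ (together with $S_{\bc_1}$, which is $S_{\bc_1}$ with $1\mid n$), any substitution of its elements into either side evaluates to $\omega$ precisely when the substituted values form, respectively, a path or a closed cycle in one of the component cycles; because every component cycle has length dividing $n$, a path of length $n-1$ that closes up to give $\bp_n=\omega$ must actually be a union of full laps around some $S_{\bc_k}$, hence also $\bc_n=\omega$, and conversely. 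A short case analysis on which component the substitution lands in, exactly parallel to the computations in Lemma~\ref{Scnm} and Lemma~\ref{lem25061422}, gives $\varphi(\bp_n)=\varphi(\bc_n)$ in all cases.

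For the reverse inclusion $\mathcal{V}_n\subseteq\mathsf{V}(T_n)$, the strategy is to reduce to subdirectly irreducible members. By Lemma~\ref{lem24121301} every subdirectly irreducible member of $\mathbf{NF}_3$ (hence of $\mathcal{V}_n$) is a $3$-nilpotent flat semiring, and by Theorem~\ref{siSg} each such algebra is a graph semiring $S_{\mathbb G}$. So it is enough to show that every graph semiring $S_{\mathbb G}\in\mathcal{V}_n$ lies in $\mathsf{V}(T_n)$. By Lemma~\ref{lem25061422}, membership of $S_{\mathbb G}$ in $\mathcal{V}_n$ forces every path component of $\mathbb G$ to have length at most $n-2$ and every cycle component to have length dividing $n$. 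By Proposition~\ref{pro25061320}, $S_{\mathbb G}$ is the $\{0,\omega\}$-direct union of the graph semirings of its connected components, so $\mathsf{V}(S_{\mathbb G})$ is generated by the collection of all $S_{\bp_m}$ with $m\le n-1$ and all $S_{\bc_k}$ with $k\mid n$ arising as components.

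It therefore remains to show that each of these building blocks lies in $\mathsf{V}(T_n)$. For the cycle blocks $S_{\bc_k}$ with $k\mid n$, this is immediate since $S_{\bc_k}$ is a subalgebra of $T_n$. For a path block $S_{\bp_m}$ with $2\le m\le n-1$: a path of length $m-1$ is a subgraph of a cycle of length $n$ (take $m-1<n$ consecutive edges around $S_{\bc_n}$, which is itself a component of $T_n$ since $n\mid n$), so $S_{\bp_m}$ embeds into $S_{\bc_n}$ and hence into $T_n$. (The edge case $S_{\bp_1}\cong S(a)$ is handled by Corollary~\ref{coro25052801}, and indeed $S(a)$ embeds into everything.) Combining, every connected-component building block of $S_{\mathbb G}$ embeds into $T_n$, so $S_{\mathbb G}\in\mathsf{V}(T_n)$, completing $\mathcal{V}_n\subseteq\mathsf{V}(T_n)$ and hence the lemma.

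The main obstacle is the verification that $T_n$ actually satisfies $\bp_n\approx\bc_n$ (the first inclusion): one must carefully track, for an arbitrary substitution into the $\{0,\omega\}$-direct union, when $\bp_n$ takes the value $\omega$ and argue that in each such case $\bc_n$ does too, using that a length-$(n-1)$ directed path inside a disjoint union of cycles of lengths dividing $n$ can only be a full wrap-around (or several) of a single cycle. The role of the extra factor $S_{\bc_1}$ in $T_n$ also deserves a remark: it is needed so that $T_n$ is not accidentally in a smaller variety (cf. the strict inclusions in Lemma~\ref{scnsln1}\eqref{eq25060202}), and one should check it does not spoil the identity $\bp_n\approx\bc_n$ — but since $1\mid n$, the component $S_{\bc_1}$ is already of the permitted form and causes no trouble.
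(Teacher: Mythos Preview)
Your first inclusion is fine and matches the paper's direct verification. The reverse inclusion, however, has a genuine gap at the step where you write ``$S_{\mathbb G}$ is the $\{0,\omega\}$-direct union of the graph semirings of its connected components, so $\mathsf{V}(S_{\mathbb G})$ is generated by the collection of all $S_{\bp_m}$ \dots\ and all $S_{\bc_k}$ \dots''. This equality is false in general: the $\{0,\omega\}$-direct union can generate a strictly larger variety than its factors. Lemma~\ref{scnsln1}\eqref{eq250602011} already records the explicit counterexample $\mathsf{V}(S_{\bp_{n-1}})<\mathsf{V}(S_{\bp_{n-1}}\circ S_{\bp_{n-1}})$, so a graph $\mathbb G$ consisting of two disjoint paths of length $n-2$ has both components embedding into $T_n$ while $\mathsf{V}(S_{\mathbb G})$ is \emph{not} the variety generated by those components. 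Consequently, knowing that each building block embeds in $T_n$ does not, by itself, place $S_{\mathbb G}$ in $\mathsf{V}(T_n)$.

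The paper closes this gap with Proposition~\ref{scnsln11}(2): one first observes that each component $S_{\mathbb G_i}$ is a subalgebra of $U=\bigcup_{k\mid n}^{\omega}S_{\bc_k}$, so that $S_{\mathbb G}$ embeds into the $m$-fold $\{0,\omega\}$-direct union $\bigcup_{1\le j\le m}^{\omega}U$; then Proposition~\ref{scnsln11}(2) (proved via the cyclic-shift construction of Lemma~\ref{scnsln1}) yields $\mathsf{V}\bigl(\bigcup_{1\le j\le m}^{\omega}U\bigr)=\mathsf{V}(S_{\bc_1}\circ U)=\mathsf{V}(T_n)$. This is precisely the point at which the extra $S_{\bc_1}$ factor earns its keep, and it is the nontrivial step your sketch is missing.
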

\begin{proof}
We first show that $S_{\bc_1}\circ \bigcup_{k\mid n}^{\omega}S_{\bc_k}$ satisfies the identity \eqref{id26060101}.
Indeed, let
\[\textstyle
\varphi:\{x_1,\ldots, x_n\}\to S_{\bc_1}\circ \bigcup_{k\mid n}^{\omega}S_{\bc_k}
\]
be an arbitrary substitution.
If $\varphi(\bp_n)=0$ then $\varphi(\bc_n)=\varphi(\bp_n)+\varphi(x_{n}x_1)=0$.
If $\varphi(\bp_n)\neq 0$ then
\[
\varphi(x_1)\varphi(x_2)+\varphi(x_2)\varphi(x_3)\cdots \varphi(x_{n-1})\varphi(x_n)=\varphi(\bp_n)=\omega.
\]
This implies that $\{\varphi(x_1),\ldots,\varphi(x_n)\}$ is contained in some connected component of
$S_{\bc_1}\circ \bigcup_{k\mid n}^{\omega}S_{\bc_k}$, say $S_{\bc_m}$, where $m$ is a divisor of $n$.
So $\varphi$ can be viewed as a substitution from $\{x_1,\ldots,x_n\}$ to $S_{\bc_m}$.
Let $a_{j_i}$ denote $\varphi(x_i)$, where $1\leq i\leq n$, $1\leq j_i\leq m$.
By \eqref{eq25053003} in the proof of Lemma \ref{Scnm}, we have
$$j_n\equiv j_1+n-1 \equiv j_1-1 \pmod{m}.$$
This implies that $\varphi(x_n)\varphi(x_1)=\omega$, and so $\varphi(\bc_n)=\omega$.
We have shown that the graph semiring $S_{\bc_1}\circ \bigcup_{k\mid n}^{\omega}S_{\bc_k}$ satisfies~\eqref{id26060101}.
Thus $S_{\bc_1}\circ \bigcup_{k\mid n}^{\omega}S_{\bc_k}$ lies in $\mathcal{V}_n$.

Conversely, suppose that $S$ is a finite subdirectly irreducible member of $\mathcal{V}_n$.
Then, by Theorem~\ref{siSg}, $S$ is isomorphic to a graph semiring $S_{\mathbb G}$ for some finite graph ${\mathbb G}$.
It follows from Lemma~\ref{lem25061422} that every path of ${\mathbb G}$ has length at most $n-2$,
and that every cycle of ${\mathbb G}$ has length that divides $n$.
Let $\{{\mathbb G}_i \mid 1\leq i \leq m\}$ denote the set of all connected components of ${\mathbb G}$.
Then every $S_{\mathbb G_i}$ is a subalgebra of $\bigcup_{k\mid n}^{\omega} S_{\bc_k}$.
By Proposition \ref{pro25061320}, $S_{\mathbb G}$ is the $\{0,\omega\}$-direct union of
$\{S_{\mathbb G_i} \mid 1\leq i\leq m\}$.
This implies that $S_{\mathbb G}$ is a subalgebra of the $\{0,\omega\}$-direct union
of $m$ copies of $\bigcup_{k\mid n}^{\omega} S_{\bc_k}$.
By Proposition~\ref{scnsln11}(ii) one can deduce that $S_{\mathbb G}$ lies in $\mathsf{V}(S_{\bc_1}\circ \bigcup_{k\mid n}^{\omega}S_{\bc_k})$,
and so does $S$.
Thus $\mathcal{V}_n$ is generated by $S_{\bc_1}\circ \bigcup_{k\mid n}^{\omega}S_{\bc_k}$.
\end{proof}

\begin{proposition}\label{fgsubvariety}
The variety $\mathcal{V}_n$ is a Cross variety for all $n \geq 2$.
\end{proposition}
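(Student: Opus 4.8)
The plan is to invoke the characterization of Cross varieties quoted above, \cite[Theorem 2.1]{mv}: a variety is Cross exactly when it is finitely generated, finitely based, and has only finitely many subvarieties. Two of these three conditions are already available. By Lemma~\ref{lemma250614001} the variety $\mathcal{V}_n$ is generated by the single finite flat semiring $S_{\bc_1}\circ \bigcup_{k\mid n}^{\omega}S_{\bc_k}$, so it is finitely generated; being finitely generated it is moreover locally finite (its $k$-generated relatively free algebra embeds into a finite power of the generator). Since $\mathbf{NF}_3$ is finitely based by Proposition~\ref{fkjidi}(2) and $\mathcal{V}_n$ is obtained from $\mathbf{NF}_3$ by adjoining the single identity~\eqref{id26060101}, $\mathcal{V}_n$ is finitely based as well. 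So the whole problem reduces to proving that $\mathcal{L}(\mathcal{V}_n)$ is finite.

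For this I would first observe that, since $\mathcal{V}_n$ is locally finite, every subvariety $\mathcal{W}$ of $\mathcal{V}_n$ is generated by the class of its finite subdirectly irreducible members: any algebra generates the same variety as the family of its finitely generated subalgebras, each finitely generated member of $\mathcal{V}_n$ is finite, and each finite algebra is a subdirect product of finitely many finite subdirectly irreducible ones. By Theorem~\ref{siSg} each such member is a graph semiring $S_{\mathbb G}$ with $\mathbb G$ a finite graph and $S_{\mathbb G}\in\mathcal{V}_n$. Hence $\mathcal{W}$ is the join of the varieties $\mathsf{V}(S_{\mathbb G})$ taken over the finite subdirectly irreducible $S_{\mathbb G}\in\mathcal{W}$, and it suffices to show that the set
\[
\mathcal{S}_n=\bigl\{\mathsf{V}(S_{\mathbb G}) : \mathbb G \text{ a finite graph with } S_{\mathbb G}\in\mathcal{V}_n\bigr\}
\]
is finite, say $|\mathcal{S}_n|=r$: then each subvariety of $\mathcal{V}_n$ is the join of a subfamily of $\mathcal{S}_n$, so $\mathcal{L}(\mathcal{V}_n)$ has at most $2^r$ elements.

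To prove $\mathcal{S}_n$ finite, take a finite graph $\mathbb G$ with $S_{\mathbb G}\in\mathcal{V}_n$. Lemma~\ref{lem25061422} forces every connected component of $\mathbb G$ to be a path of length at most $n-2$ or a cycle of length dividing $n$, so up to isomorphism only finitely many connected components can occur, namely the path graph semirings $S_{\bp_2},\dots,S_{\bp_{n-1}}$, the cycle graph semirings $S_{\bc_k}$ with $k\mid n$, and the empty component $S(a)$. By Proposition~\ref{pro25061320}, $S_{\mathbb G}$ is the $\{0,\omega\}$-direct union of its connected components, so $\mathsf{V}(S_{\mathbb G})$ is determined by the multiset of component types (with multiplicities) occurring in $\mathbb G$. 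Finally, Lemma~\ref{scnsln1} together with Proposition~\ref{scnsln11} --- the latter used, with the routine modification of the generating set described in its proof, for arbitrary finite multisets of components rather than $n$-fold repetitions --- shows that inside such a $\{0,\omega\}$-direct union each component type may be assumed to occur with multiplicity at most two (and a shorter path may be folded into a longer one) without altering the variety generated. Therefore $\mathsf{V}(S_{\mathbb G})=\mathsf{V}(S_{\mathbb G^{\ast}})$ for some graph $\mathbb G^{\ast}$ assembled from at most two copies of each of the finitely many admissible connected types; there are only finitely many such $\mathbb G^{\ast}$ up to isomorphism, so $\mathcal{S}_n$ is finite.

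The one genuinely delicate step --- the main obstacle --- is this last reduction: one must check, through the $\{0,\omega\}$-direct-union computations underlying Lemma~\ref{scnsln1} and Proposition~\ref{scnsln11}, that collapsing arbitrarily many copies of a component down to two (and absorbing shorter paths into longer ones, and several cycles of equal length into one) really does leave $\mathsf{V}(S_{\mathbb G})$ unchanged; the algebra $T$ and the cyclic-shift subalgebras used in the proof of Lemma~\ref{scnsln1} have to be adapted to each mixed configuration of components. Granting this, the finiteness of $\mathcal{S}_n$ and hence of $\mathcal{L}(\mathcal{V}_n)$ follows, and by \cite[Theorem 2.1]{mv} we conclude that $\mathcal{V}_n$ is a Cross variety for every $n\ge 2$.
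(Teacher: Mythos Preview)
Your argument is correct and rests on the same two pillars as the paper's proof: Lemma~\ref{lem25061422} (bounding the component types admissible in a graph semiring of $\mathcal{V}_n$) and the multiplicity-collapsing machinery of Lemma~\ref{scnsln1} / Proposition~\ref{scnsln11}. The difference is only in which characterization of Cross you invoke. You use the \cite[Theorem~2.1]{mv} form (finitely generated, finitely based, finitely many subvarieties), and therefore have to argue that every subvariety of $\mathcal{V}_n$ is a join of finitely many $\mathsf{V}(S_{\mathbb G})$'s drawn from a finite list. The paper instead uses the primary definition (locally finite, finitely based, finitely many critical algebras) and argues directly: a critical $S_{\mathbb G}$ in $\mathcal{V}_n$ cannot have more than two loops or two paths of the same length, nor two cycles of the same length~$>1$, since otherwise Proposition~\ref{scnsln11} would place $S_{\mathbb G}$ in the variety generated by the proper subalgebra obtained by deleting one redundant component, contradicting criticality. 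That route is a little shorter because it bypasses the join-of-subdirectly-irreducibles step and the need to verify the ``mixed multiset'' version of Proposition~\ref{scnsln11} that you flag as the delicate point; the criticality hypothesis lets one delete components one at a time. Your approach, on the other hand, yields the slightly stronger by-product that $\mathcal{S}_n$ itself is finite, not just the set of critical members.
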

\begin{proof}
By Proposition~\ref{fkjidi}(ii) and Lemma \ref{lemma250614001}, one can deduce that $\mathcal{V}_n$ is finitely based
and finitely generated.
In the reminder we shall show that $\mathcal{V}_n$ contains only finitely many critical algebras.
Indeed, let $S$ be a critical algebra in $\mathcal{V}_n$.
Then $S$ must be subdirectly irreducible.
By Theorem \ref{siSg}, $S$ is isomorphic to some finite graph semiring $S_{\mathbb G}$.
By Lemma~\ref{lem25061422}, every path of ${\mathbb G}$ has length at most $n-2$,
and that every cycle of ${\mathbb G}$ has length that divides $n$.
Since $S_{\mathbb{G}}$ is critical, Proposition~\ref{scnsln11} implies that in $\mathbb{G}$,
loops and paths of equal length appear at most twice, and cycles of length $>1$ are distinct.
Thus the number of such graphs is finite and so $\mathcal{V}_n$ contains only finitely many critical algebras.
Therefore, $\mathcal{V}_n$ is a Cross variety.
\end{proof}


\begin{proposition}\label{pro25061501}
Every finitely generated subvariety of $\mathbf{NF}_3$ is contained in
$\mathcal{V}_n$ for some integer $n\geq 2$.
\end{proposition}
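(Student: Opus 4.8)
The plan is to reduce everything to finite graph semirings and then exhibit one suitable exponent $n$. Since $\mathcal{W}$ is a finitely generated subvariety of $\mathbf{F}$, it is generated by a single finite flat semiring $\mathbf{T}$, which of course lies in $\mathbf{NF}_3$. Writing $\mathbf{T}$ as a subdirect product of its finitely many subdirectly irreducible quotients and combining Lemma~\ref{lem24121301}, Proposition~\ref{fkjidi}(1) and Theorem~\ref{siSg}, each such quotient is a finite graph semiring. Hence $\mathcal{W}=\mathsf{V}(\{S_{\mathbb{G}_1},\dots,S_{\mathbb{G}_r}\})$ for finitely many finite graphs $\mathbb{G}_1,\dots,\mathbb{G}_r$ (one may, via Proposition~\ref{pro25061320}, also fold these into the single graph $\mathbb{G}=\mathbb{G}_1\sqcup\cdots\sqcup\mathbb{G}_r$ with $\mathcal{W}=\mathsf{V}(S_{\mathbb{G}})$, though working with the $\mathbb{G}_i$ individually already suffices).

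Next I would choose the exponent. Let $\ell$ be the largest length of a path occurring among the connected components of $\mathbb{G}_1,\dots,\mathbb{G}_r$ (with $\ell=0$ if none of these components is a path), and let $d$ be the least common multiple of the lengths of the cycles occurring among these components (with $d=1$ if none of them is a cycle). Put $n=d\cdot\lceil(\ell+2)/d\rceil$, so that $n\geq 2$, $n\geq\ell+2$, and every one of those cycle lengths divides $n$. It then suffices to show $S_{\mathbb{G}_i}\in\mathcal{V}_n$ for every $i$, since $\mathcal{V}_n$ is a variety and so $\mathcal{W}\subseteq\mathcal{V}_n$ follows at once.

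The crux is to check that $S_{\mathbb{G}_i}$ satisfies~\eqref{id26060101}. Let $\varphi$ be any assignment of $x_1,\dots,x_n$ to $S_{\mathbb{G}_i}$. Each value $\varphi(x_j)\varphi(x_{j+1})$ ($1\leq j<n$) and $\varphi(x_n)\varphi(x_1)$ belongs to $\{0,\omega\}$, and in a flat semiring a sum of elements of $\{0,\omega\}$ equals $\omega$ exactly when every summand equals $\omega$ and equals $0$ otherwise. Since the summands of $\bc_n$ are those of $\bp_n$ together with $x_nx_1$, we get $\varphi(\bc_n)=\omega\Rightarrow\varphi(\bp_n)=\omega$, so it is enough to exclude the case $\varphi(\bp_n)=\omega$ while $\varphi(\bc_n)=0$. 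In that situation $(\varphi(x_j),\varphi(x_{j+1}))\in E(\mathbb{G}_i)$ for all $1\leq j<n$, so $\varphi(x_1),\dots,\varphi(x_n)$ is a directed walk using $n-1$ edges; as every vertex of $\mathbb{G}_i$ has out-degree at most $1$, this walk is confined to a single connected component. A path component has at most $\ell\leq n-2$ edges, hence cannot carry a walk with $n-1$ edges; therefore the component is a cycle, of some length $c$ dividing $n$. Since $c\mid n$ we have $n-1\equiv-1\pmod c$, so along the cycle $\varphi(x_n)$ is the predecessor of $\varphi(x_1)$, whence $(\varphi(x_n),\varphi(x_1))\in E(\mathbb{G}_i)$ as well; but then $\varphi(\bc_n)=\omega$, a contradiction. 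Thus $S_{\mathbb{G}_i}\in\mathcal{V}_n$ for every $i$, and $\mathcal{W}\subseteq\mathcal{V}_n$.

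The only genuinely delicate point is the choice of $n$: it must simultaneously exceed every path length occurring in the generators (so that no $(n-1)$-edge walk can be absorbed by a path component) and be a common multiple of every cycle length occurring there (so that the extra term $x_nx_1$ of $\bc_n$ is forced to take the value $\omega$ whenever all the terms of $\bp_n$ do). Finiteness of the generating semiring is exactly what makes both demands simultaneously satisfiable; once $n$ is fixed, the remainder is routine manipulation with the graph-semiring machinery developed above.
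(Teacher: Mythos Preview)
Your proof is correct. It differs from the paper's in organization rather than in underlying ideas, and the difference is worth noting. The paper first embeds each finite graph semiring $S_{\mathbb G}$ into the variety generated by $S_{\bc_1}\circ\bigcup_{1\le i\le m}^{\omega}S_{\bc_i}$ via Proposition~\ref{scnsln11}, and then invokes Lemma~\ref{lemma250614001} to conclude that this generator lies in $\mathcal V_{m!}$. You instead verify the defining identity $\bp_n\approx\bc_n$ directly on each $S_{\mathbb G_i}$, for an $n$ chosen so that $n\ge\ell+2$ and every occurring cycle length divides $n$. The cycle-component calculation you give (that $c\mid n$ forces $\varphi(x_n)$ to be the predecessor of $\varphi(x_1)$) is exactly the computation appearing in the first half of the proof of Lemma~\ref{lemma250614001}; you are simply applying it in situ rather than through an intermediate embedding. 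Your route is more self-contained, avoiding both Proposition~\ref{scnsln11} and Lemma~\ref{lemma250614001}, and yields a tighter exponent than $m!$; the paper's route reuses the structural machinery it has already built and so is shorter at this point in the exposition.
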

\begin{proof}
We first show that every finite graph semiring belongs to the variety $\mathcal{V}_n$ for some integer $n\geq 2$.
Indeed, let $S_{\mathbb G}$ be an arbitrary finite graph semiring.
Suppose that the maximum length of paths and cycles in $\mathbb G$ is less than $m$.
Let $\{{\mathbb G}_i \mid 1\leq i \leq k\}$ denote the set of all connected components of ${\mathbb G}$.
Then $S_{\mathbb G_i}$ is a subalgebra of $\bigcup_{1\leq i\leq m}^{\omega} S_{\bc_i}$ for every $1\leq i \leq k$.
By Proposition \ref{pro25061320}, $S_{\mathbb G}$ is the $\{0,\omega\}$-direct union of
$\{S_{\mathbb G_i} \mid 1\leq i\leq k\}$.
This implies that $S_{\mathbb G}$ is a subalgebra of the $\{0,\omega\}$-direct union
of $k$ copies of $\bigcup_{1\leq i\leq m}^{\omega} S_{\bc_i}$.
Now it follows from Proposition \ref{scnsln11}(ii) that $S_{\mathbb G}$ lies in the variety
$\mathsf{V}(S_{\bc_1}\circ \bigcup_{1\leq i\leq m}^{\omega} S_{\bc_i})$.
Take $n=m!$. By Lemma \ref{lemma250614001},
$S_{\bc_1}\circ \bigcup_{1\leq i\leq m}^{\omega} S_{\bc_i}$ is a member of $\mathcal{V}_{n}$.
So $S_{\mathbb G}$ is in $\mathcal{V}_{n}$.

Let $\mathcal{V}$ be a finitely generated subvariety of $\mathbf{NF}_3$.
It follows from Lemma~\ref{lem24121301} and Theorem~\ref{siSg} that
$\mathcal{V}$ is generated by finitely many finite graph semirings $S_1, \ldots, S_m$.
By the preceding result, there exists $n$ such that
the finite graph semiring $\bigcup^\omega_{1\leq i \leq m} S_{i}$
lies in $\mathcal{V}_n$.
This implies that $\mathcal{V}$ is a subvariety of $\mathcal{V}_n$.
\end{proof}

\begin{corollary}\label{fgsubkehua}
Let $\mathcal{V}$ be subvariety of $\mathbf{NF}_3$.
If $\mathcal{V}$ does not contain $S_{\bp_m}$ for some $m\geq 2$, then $\mathcal{V}$ is finitely generated.
\end{corollary}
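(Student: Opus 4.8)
The plan is to deduce from the hypothesis that $\mathcal{V}$ is contained in one of the Cross varieties $\mathcal{V}_n$; since, as recalled above, every subvariety of a Cross variety is again a Cross variety, this immediately gives that $\mathcal{V}$ is finitely generated.

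The crucial point is a pair of embedding facts. For $k\geq m$ the graph semiring $S_{\bp_m}$ is a subalgebra of $S_{\bp_k}$ (restrict to the first $m$ vertices of the path), and for $n\geq m+1$ it is a subalgebra of $S_{\bc_n}$ (restrict to $m$ consecutive vertices of the $n$-cycle: since $m<n$, among these $m$ vertices neither the wrap-around edge nor any ``backward'' edge occurs, so they span a copy of the path graph). Consequently, whenever a graph $\mathbb{G}$ has a connected component which is a path of length at least $m-1$ or a cycle of length at least $m+1$, the graph semiring $S_{\mathbb{G}}$ has $S_{\bp_m}$ as a subalgebra. As $\mathcal{V}$ is closed under subalgebras and omits $S_{\bp_m}$, it follows that every graph semiring lying in $\mathcal{V}$ is induced by a graph all of whose paths have length at most $m-2$ and all of whose cycles have length at most $m$.

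With this structural bound in hand I would re-run the argument from the proof of Proposition~\ref{pro25061501}, now using the bound in place of the (here unavailable) assumption of finite generation. Put $U=\bigcup_{1\leq i\leq m}^{\omega}S_{\bc_i}$. By the previous paragraph together with Proposition~\ref{pro25061320}, each connected component of a graph semiring in $\mathcal{V}$ is isomorphic to $S_{\bc_j}$ with $j\leq m$ or to $S_{\bp_k}$ with $k\leq m-1$, and in both cases it embeds into $U$ (in the second case via $S_{\bp_k}\hookrightarrow S_{\bc_m}$, which is valid since $m\geq k+1$). Since a $\{0,\omega\}$-direct union of subalgebras is a subalgebra of the corresponding $\{0,\omega\}$-direct union, every graph semiring in $\mathcal{V}$ therefore embeds into a $\{0,\omega\}$-direct union of finitely many copies of $U$, which by Proposition~\ref{scnsln11}(2) lies in $\mathsf{V}(S_{\bc_1}\circ U)$. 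Exactly as in the proof of Proposition~\ref{pro25061501} (taking $n=m!$), the finite graph semiring $S_{\bc_1}\circ U$ belongs to $\mathcal{V}_{m!}$. By Lemma~\ref{lem24121301} and Theorem~\ref{siSg}, $\mathcal{V}$ is generated by its subdirectly irreducible members, every one of which is a graph semiring and hence lies in $\mathcal{V}_{m!}$; therefore $\mathcal{V}\subseteq\mathcal{V}_{m!}$. Since $\mathcal{V}_{m!}$ is a Cross variety by Proposition~\ref{fgsubvariety}, its subvariety $\mathcal{V}$ is a Cross variety as well, and in particular $\mathcal{V}$ is finitely generated.

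The main obstacle is the first genuine step: realising that excluding a single path graph semiring bounds not only the lengths of paths but also the lengths of cycles occurring in the subdirectly irreducible members of $\mathcal{V}$ — this is precisely the content of the embedding $S_{\bp_m}\hookrightarrow S_{\bc_n}$ for $n\geq m+1$. The one computation that should be carried out with care is the verification that $m$ consecutive vertices of an $n$-cycle with $n\geq m+1$ really do span a subalgebra isomorphic to $S_{\bp_m}$, i.e.\ that the wrap-around edge and all backward edges are genuinely absent; this is exactly where the inequality $n\geq m+1$, rather than merely $n\geq m$, is used. Everything after that is routine bookkeeping with $\{0,\omega\}$-direct unions and the already established properties of the varieties $\mathcal{V}_n$.
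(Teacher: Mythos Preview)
Your proof is correct and follows essentially the same route as the paper: exclude long paths and long cycles via the embeddings $S_{\bp_m}\hookrightarrow S_{\bp_k}$ for $k\geq m$ and $S_{\bp_m}\hookrightarrow S_{\bc_n}$ for $n\geq m+1$, then re-run the argument of Proposition~\ref{pro25061501} to obtain $\mathcal{V}\subseteq\mathcal{V}_{m!}$ and conclude via Proposition~\ref{fgsubvariety}. Your cycle bound $\leq m$ is in fact sharper than the paper's stated ``less than $m$'' (the paper overlooks that $S_{\bp_m}$ does not embed in $S_{\bc_m}$, though either bound suffices for the argument), and the only small imprecision---``finitely many copies of $U$'' for a possibly infinite graph semiring---is shared by the paper's proof and is harmless once one restricts to finite subdirectly irreducibles, which is legitimate since identities involve only finitely many variables.
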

\begin{proof}
Let $S_{\mathbb{G}}$ be an arbitrary graph semiring in $\mathcal{V}$.
Since $S_{\bp_m}$ is not a subalgebra of $S_{\mathbb{G}}$,
the maximum length of paths and cycles in $\mathbb{G}$ must be less than $m$.
From the proof of Proposition~\ref{pro25061501}, it follows that $S_{\mathbb{G}}$ is a member of $\mathcal{V}_{m!}$,
and so $\mathcal{V}$ is a subvariety of $\mathcal{V}_{m!}$.
By Proposition~\ref{fgsubvariety} we therefore conclude that $\mathcal{V}$ is finitely generated.
\end{proof}

\begin{thm}\label{coro25060115}
Every finitely generated subvariety of $\mathbf{NF}_3$ is a Cross variety.
\end{thm}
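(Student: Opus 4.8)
The plan is to assemble this from the two preceding propositions together with the general fact, cited earlier from~\cite[Proposition 1.4.35]{sapir14}, that every subvariety of a Cross variety is again a Cross variety. So the argument will be essentially a one-line deduction, and the real work has already been carried out in Propositions~\ref{fgsubvariety} and~\ref{pro25061501}.

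Concretely, let $\mathcal{V}$ be a finitely generated subvariety of $\mathbf{NF}_3$. First I would invoke Proposition~\ref{pro25061501} to find an integer $n \geq 2$ with $\mathcal{V} \subseteq \mathcal{V}_n$. Next I would recall from Proposition~\ref{fgsubvariety} that $\mathcal{V}_n$ is a Cross variety. Finally, since $\mathcal{V}$ is a subvariety of the Cross variety $\mathcal{V}_n$, the result of Sapir quoted in Section~\ref{sec:intro} (from~\cite[Proposition 1.4.35]{sapir14}) gives at once that $\mathcal{V}$ is itself a Cross variety.

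There is no genuine obstacle at this stage: the substance lies in the earlier results. Proposition~\ref{pro25061501} already handled the delicate point that an arbitrary finitely generated subvariety is controlled by some $\mathcal{V}_n$ — via writing a generating finite graph semiring as a subalgebra of a $\{0,\omega\}$-direct union of copies of $\bigcup_{1\le i\le m}^{\omega} S_{\mathbf{c}_i}$ and then applying Proposition~\ref{scnsln11}(2) together with Lemma~\ref{lemma250614001} to land inside $\mathcal{V}_{m!}$. Likewise, Proposition~\ref{fgsubvariety} already established that each $\mathcal{V}_n$ is finitely based, finitely generated, and has only finitely many critical algebras (using Theorem~\ref{siSg} and Lemma~\ref{lem25061422} to bound the admissible graphs). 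If anything needs care in the write-up, it is only to make sure the Sapir inheritance result is applied to the right ambient variety $\mathcal{V}_n$ rather than to $\mathbf{NF}_3$ itself — which is not Cross, since by Theorem~\ref{cor:continuum} it has uncountably many subvarieties — but this is a matter of bookkeeping, not of substance.
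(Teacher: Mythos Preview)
Your proposal is correct and matches the paper's own proof, which simply states that the result follows immediately from Propositions~\ref{fgsubvariety} and~\ref{pro25061501}. The only addition you make explicit is the Sapir inheritance result, which the paper had already recorded at the start of the section and is implicitly using.
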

\begin{proof}
This follows from Propositions \ref{fgsubvariety} and \ref{pro25061501} immediately.
\end{proof}

\begin{corollary}\label{coro25060310}
Every finite $3$-nilpotent flat semiring is finitely based.
\end{corollary}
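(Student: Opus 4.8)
The plan is to observe that Corollary~\ref{coro25060310} is an immediate consequence of Theorem~\ref{coro25060115} once one unwinds the definition of a Cross variety and the definition of $\mathbf{NF}_3$. First I would note that if $S$ is a finite $3$-nilpotent flat semiring, then by definition $S$ is a member of $\mathbf{NF}_3$, the variety generated by all $3$-nilpotent flat semirings. Consequently the variety $\mathsf{V}(S)$ generated by $S$ is a finitely generated subvariety of $\mathbf{NF}_3$, so Theorem~\ref{coro25060115} applies and tells us that $\mathsf{V}(S)$ is a Cross variety.

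Next, recall that every Cross variety is by definition finitely based; hence $\mathsf{V}(S)$ is finitely based, which is precisely the assertion that $S$ is finitely based. There is no real obstacle at this stage: all the substantive work has already been done in establishing Theorem~\ref{coro25060115}, namely the identification of the subdirectly irreducible members of $\mathbf{NF}_3$ with graph semirings (Theorem~\ref{siSg}), the fact that every finitely generated subvariety is contained in some $\mathcal{V}_n$ (Proposition~\ref{pro25061501}), and the verification that each $\mathcal{V}_n$ is a Cross variety (Proposition~\ref{fgsubvariety}, which in turn rests on Lemmas~\ref{lem25061422} and~\ref{lemma250614001} and Propositions~\ref{fkjidi}(2) and~\ref{scnsln11}). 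The present corollary simply reads off the "finitely based" clause from "Cross" for the particular finitely generated subvariety $\mathsf{V}(S)$.
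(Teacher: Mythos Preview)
Your proof is correct and follows exactly the paper's approach: the paper's proof reads simply ``This is a consequence of Theorem~\ref{coro25060115},'' and you have unwound that implication explicitly, observing that $\mathsf{V}(S)$ is a finitely generated subvariety of $\mathbf{NF}_3$ and hence Cross, so finitely based.
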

\begin{proof}
This is a consequence of Theorem~\ref{coro25060115}.
\end{proof}


\begin{corollary}
The variety $\mathbf{NF}_3$ is not finitely generated.
\end{corollary}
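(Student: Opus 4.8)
The plan is to argue by contradiction using only results already established in the excerpt. Suppose that $\mathbf{NF}_3$ is finitely generated. Since $\mathbf{NF}_3$ is (trivially) a subvariety of itself, Theorem~\ref{coro25060115} applies and shows that $\mathbf{NF}_3$ is a Cross variety. By the characterization of Cross varieties recalled at the beginning of Section~3 (from \cite[Theorem 2.1]{mv}), a Cross variety has only finitely many subvarieties. This contradicts Theorem~\ref{cor:continuum}, according to which $\mathcal{L}(\mathbf{NF}_3)$ has the cardinality of the continuum. Hence $\mathbf{NF}_3$ is not finitely generated.

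Alternatively, one can bypass the Cross-variety machinery and use Propositions~\ref{pro25061501} and~\ref{fgsubvariety} together with Lemma~\ref{lem25061422}. If $\mathbf{NF}_3$ were finitely generated, then Proposition~\ref{pro25061501} would give $\mathbf{NF}_3\subseteq\mathcal{V}_n$ for some $n\geq 2$, whence $\mathbf{NF}_3=\mathcal{V}_n$; but $S_{\bp_{n+1}}$ is a graph semiring lying in $\mathbf{NF}_3$ which, by Lemma~\ref{lem25061422}(a) (its underlying graph is a single path of length $n>n-2$), does not lie in $\mathcal{V}_n$ — a contradiction. One could also simply invoke Proposition~\ref{fgsubvariety}, which says $\mathcal{V}_n$ has finitely many subvarieties, against Theorem~\ref{cor:continuum}. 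In every formulation the argument is immediate: all the substantive work has been carried out in Theorems~\ref{cor:continuum} and~\ref{coro25060115}, so the only point to be careful about is the (routine) observation that a finitely generated variety is a finitely generated subvariety of itself; there is no genuine obstacle here.
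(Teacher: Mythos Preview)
Your first paragraph is exactly the paper's proof: assume $\mathbf{NF}_3$ is finitely generated, apply Theorem~\ref{coro25060115} to conclude it is Cross, hence has finitely many subvarieties, contradicting Theorem~\ref{cor:continuum}. The alternative arguments you sketch are also valid but unnecessary here.
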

\begin{proof}
Suppose by way of contradiction that $\mathbf{NF}_3$ is finitely generated.
By Theorem~\ref{coro25060115} we have that $\mathbf{NF}_3$ is a Cross variety.
This implies that $\mathbf{NF}_3$ has finitely many subvarieties, which contradicts Theorem~\ref{cor:continuum}.
Hence $\mathbf{NF}_3$ is not finitely generated as required.
\end{proof}

The following example shows that a finitely based subvariety of $\mathbf{NF}_3$ is not necessarily finitely generated.
\begin{example}
$\mathsf{V}(\bigcup_{n\geq 2}^{\omega}S_{c_n})$ is determined within the variety $\mathbf{NF}_3$
by the identity $x^2\approx x^3$, but is not finitely generated.
In fact, let $\mathcal W$ denote the subvariety of $\mathbf{NF}_3$ determined by the identity $x^2\approx x^3$.
It is easy to see that $\bigcup_{n\geq 2}^{\omega}S_{c_n}$ satisfies $x^2\approx x^3$,
and so $\mathsf{V}(\bigcup_{n\geq 2}^{\omega}S_{c_n})$ is a subvariety of $\mathcal W$.
Conversely, let $S$ be a finite nontrivial subdirectly irreducible member of $\mathcal W$.
It follows from Theorem $\ref{siSg}$ and Lemma $\ref{Scnm}$ that $S$ is isomorphic to a finite graph semiring $S_{\mathbb G}$,
where $\mathbb G$ is a finite loop-free graph.
So $S_{\mathbb G}$ is a subalgebra of $\{0,\omega\}$-direct union of $m$ copies of $\bigcup_{n\geq 2}^\omega S_{\bc_n}$,
where $m$ is the number of connected components of $\mathbb G$.
By Proposition~$\ref{scnsln11}(\rm i)$, we have that $S_{\mathbb G}$ is a member of
$\mathsf{V}(\bigcup_{n\geq 2}^{\omega}S_{c_n})$,
and so is $S$.
Thus $\mathsf{V}(\bigcup_{n\geq 2}^{\omega}S_{c_n})=\mathcal W$ as required.
\end{example}

Proposition~\ref{pro25061501} essentially provides a general method for finding a finite equational basis of a finitely generated subvariety $\mathcal{V}$ of  $\mathbf{NF}_3$.
Firstly, determine $n$ such that $\mathcal{V}$ is a subvariety of $\mathcal{V}_n$.
Next, find a maximal chain from $\mathcal{V}$ to $\mathcal{V}_n$, which is denoted by
\[
\mathcal{V} = \mathcal{W}_0 < \mathcal{W}_1 < \cdots < \mathcal{W}_m = \mathcal{V}_n.
\]
For any $1 \leq k \leq m$, let us take an identity
$\bu_k \approx \bv_k$ that is satisfied by $\mathcal{W}_{k-1}$, but is not true in $\mathcal{W}_{k}$.
So $\mathcal{V}$ is the subvariety of $\mathbf{NF}_3$ determined by the following finitely many identities:
\[
\bp_n\approx \bc_n, \bu_1 \approx \bv_1, \ldots, \bu_m \approx \bv_m.
\]

We end this section by showing that finding the finite equational basis of $\mathsf{V}(S_{\bc_n})$
can also be addressed through \emph{forbidden identities}.

\begin{lemma}\label{sckk|njinqu}
Let $\mathcal{V}$ be a subvariety of $\mathbf{NF}_3$.
Then ${\mathcal V}$ satisfies the identity $\bc_n\approx x^3$ if and only if
${\mathcal V}$ does not contain $S_{\bc_k}$ for all divisors $k$ of $n$.
\end{lemma}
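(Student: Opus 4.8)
The plan is to establish the two implications separately. The ``only if'' direction is immediate from Lemma~\ref{Scnm}: if $\mathcal V$ satisfies $\bc_n\approx x^3$ and $k$ is a divisor of $n$, then $S_{\bc_k}$ does \emph{not} satisfy $\bc_n\approx x^3$ (Lemma~\ref{Scnm}, applied with $m=n$), so $S_{\bc_k}$ cannot be a member of $\mathcal V$.

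For the ``if'' direction I would argue by contraposition. Suppose $\mathcal V$ fails $\bc_n\approx x^3$. By Birkhoff's subdirect representation theorem some nontrivial subdirectly irreducible member $S$ of $\mathcal V$ fails this identity. Since $\mathcal V\subseteq\mathbf{NF}_3$, Proposition~\ref{fkjidi}(1) shows that $S$ lies in $\mathbf F$ and satisfies $x_1x_2x_3\approx y_1y_2y_3$; by Lemma~\ref{lem24121301} $S$ is a flat semiring, and the latter identity makes it $3$-nilpotent, so $S\cong S_{\mathbb G}$ for some graph $\mathbb G$ by Theorem~\ref{siSg}. As $S^3=\{0\}$, a substitution $\psi$ witnessing the failure of $\bc_n\approx x^3$ sends $x^3$ to $0$ and therefore must send $\bc_n$ to $\omega$; writing $v_i=\psi(x_i)$ we get
\[
v_1v_2=v_2v_3=\cdots=v_{n-1}v_n=v_nv_1=\omega ,
\]
so each $v_i$ is a vertex of $\mathbb G$ and $(v_i,v_{i+1})$, with indices read modulo $n$, is an edge of $\mathbb G$.

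Finally I would use the standing hypothesis that every vertex of $\mathbb G$ has out-degree at most $1$: it forces $v_i=v_j\Rightarrow v_{i+1}=v_{j+1}$, so the sequence $(v_i)_{i\in\mathbb Z/n}$ is periodic with some least period $k$, and $k$ divides $n$ (this is precisely the congruence-modulo-$n$ computation already performed in the proofs of Lemma~\ref{Scnm} and Lemma~\ref{lemma250614001}). The distinct vertices $v_1,\dots,v_k$, together with the edges joining them, form a cycle of length $k$ in $\mathbb G$, so $\{0,\omega,v_1,\dots,v_k\}$ is a subalgebra of $S_{\mathbb G}$ isomorphic to $S_{\bc_k}$. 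Since $\mathcal V$ contains $S\cong S_{\mathbb G}$ and is closed under subalgebras, $S_{\bc_k}\in\mathcal V$ for this divisor $k$ of $n$ --- the required contradiction. The one step that genuinely needs care is this extraction of an honest cycle of length dividing $n$ from the length-$n$ closed walk; the rest (reduction to subdirectly irreducibles, closure under subalgebras, the identification $S\cong S_{\mathbb G}$) is routine or already available in the excerpt.
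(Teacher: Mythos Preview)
Your proof is correct and follows essentially the same route as the paper's. Both arguments reduce to a subdirectly irreducible member, identify it as a graph semiring via Theorem~\ref{siSg}, and then analyse a substitution sending $\bc_n$ to $\omega$ to locate a cycle whose length divides $n$. The only cosmetic difference is in this last extraction: the paper observes that the images $\varphi(x_1),\dots,\varphi(x_n)$ lie in a single connected component, notes that this component must be a cycle $S_{\bc_k}$ (paths cannot support closed walks), and then invokes Lemma~\ref{Scnm} once more to conclude $k\mid n$; you instead read off the least period $k$ of the closed walk directly from the out-degree~$\leq 1$ condition and exhibit $\{0,\omega,v_1,\dots,v_k\}$ as the required copy of $S_{\bc_k}$.
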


\begin{proof}
Suppose that ${\mathcal V}$ contains a graph semiring $S_{\bc_k}$ for some divisor $k$ of $n$.
Then by Lemma~\ref{Scnm}, we have that ${\mathcal V}$ does not satisfy $\bc_n\approx x^3$.

Conversely, assume that ${\mathcal V}$ does not satisfy $\bc_n\approx x^3$.
By Theorem~\ref{siSg}, there is a finite graph semiring $S_{\mathbb G}$ in $\mathcal{V}$
such that $\varphi(\bc_n)\neq \varphi(x^3)$ for some substitution  $\varphi:\{x, x_1, \ldots, x_n\}\to S_{\mathbb G}$.
Since $\varphi(x^3)=0$, we have
\[
\varphi(x_1)\varphi(x_2)+\varphi(x_2)\varphi(x_3)+\cdots+\varphi(x_{n-1})\varphi(x_n)+\varphi(x_{n})\varphi(x_1)=\varphi(\bc_n)=\omega.
\]
and so $\{\varphi(x_1), \ldots, \varphi(x_n)\}$ is contained in some connected component $\mathbb G_i$ of $\mathbb G$.
It is obvious that $\mathbb G_i$ is a circle graph, and so $S_{\mathbb G_i}$ is isomorphic to $S_{\bc_k}$ for some $k$.
Since $S_{\bc_k}$ does not satisfy the identity $\bc_n\approx x^3$, it follows from Lemma~\ref{Scnm} again that $k$ is a divisor of $n$.
So ${\mathcal V}$ contains $S_{\bc_k}$ for some divisor $k$ of $n$.
\end{proof}

\begin{proposition}
Let $n\geq 2$ be an integer. Then $\mathsf{V}(S_{\bc_n})$ is the subvariety of $\mathbf{NF}_3$
determined by the identities $\bp_n\approx \bc_n$ and $\bc_k\approx x^3$, where $k$ runs through all positive divisors of $n$ with $k\neq n$.
\end{proposition}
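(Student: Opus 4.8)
The plan is to show set-theoretic containment in both directions between $\mathsf{V}(S_{\bc_n})$ and the subvariety $\mathcal{W}$ of $\mathbf{NF}_3$ defined by $\bp_n\approx\bc_n$ together with $\bc_k\approx x^3$ for every proper divisor $k$ of $n$. For the forward inclusion $\mathsf{V}(S_{\bc_n})\subseteq\mathcal{W}$, I would simply verify that the single generating algebra $S_{\bc_n}$ satisfies all the listed identities. That $S_{\bc_n}$ satisfies $\bp_n\approx\bc_n$ follows from Lemma~\ref{lemma250614001} (or can be checked directly, mirroring the computation there: any substitution sending $\bp_n$ to $\omega$ forces the indices to wrap around mod $n$, hence $\bc_n$ also evaluates to $\omega$). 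That $S_{\bc_n}$ satisfies $\bc_k\approx x^3$ for each proper divisor $k$ of $n$ is exactly Lemma~\ref{Scnm}, since a proper divisor $k$ of $n$ satisfies $k\mid n$ with $k<n$, so $n$ does not divide $k$; wait—more carefully, Lemma~\ref{Scnm} says $S_{\bc_n}$ satisfies $\bc_m\approx x^3$ iff $n\nmid m$, so I need $n\nmid k$, which holds whenever $k$ is a proper divisor of $n$ (indeed $0<k<n$). So all generating identities hold in $S_{\bc_n}$, giving $\mathsf{V}(S_{\bc_n})\subseteq\mathcal{W}$.

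For the reverse inclusion $\mathcal{W}\subseteq\mathsf{V}(S_{\bc_n})$, it suffices by Lemma~\ref{lem24121301} and Theorem~\ref{siSg} to show that every nontrivial subdirectly irreducible member of $\mathcal{W}$ lies in $\mathsf{V}(S_{\bc_n})$; such a member is a graph semiring $S_{\mathbb G}$. Since $S_{\mathbb G}\in\mathcal{V}_n$ (it satisfies $\bp_n\approx\bc_n$), Lemma~\ref{lem25061422} tells us every path of $\mathbb G$ has length at most $n-2$ and every cycle of $\mathbb G$ has length dividing $n$. The key extra input from the identities $\bc_k\approx x^3$ is to rule out short cycles and all paths: since $\mathcal{W}$ satisfies $\bc_k\approx x^3$ for every proper divisor $k$ of $n$, Lemma~\ref{sckk|njinqu} (applied with $n$ replaced by each such proper divisor, or rather reasoning directly) shows $\mathcal{W}$ contains no $S_{\bc_d}$ with $d\mid k$ for any proper divisor $k$ of $n$ — in particular no $S_{\bc_1}$ and, more generally, no cycle semiring of order a proper divisor of $n$. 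Hence every cycle of $\mathbb G$ has length exactly $n$. Moreover $\mathcal{W}$ contains no path semiring $S_{\bp_m}$ for $m\geq 2$: if it did, then since $\bc_1$ satisfies $\bc_k\approx x^3$... hmm, that is not immediate. Let me instead argue: $\mathcal{W}\subseteq\mathcal{V}_n$, and a path of length $n-1$ is excluded by Lemma~\ref{lem25061422}(a); to exclude \emph{all} paths I observe that $S_{\bp_2}=S(ab)$ is a subalgebra of $S_{\bp_n}$ only when $n\geq 2$, but more to the point I should check whether $S_{\bp_2}\in\mathcal{W}$. Since $S_{\bc_1}\not\in\mathcal{W}$ but there is no a priori reason $S_{\bp_2}\not\in\mathcal{W}$, I would instead handle paths by a direct verification that $S_{\bp_m}$ fails $\bc_k\approx x^3$ for a suitable proper divisor $k$ — namely $k=1$: does $S_{\bp_m}$ satisfy $\bc_1\approx x^3$, i.e. $x^2\approx x^3$? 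Yes it does, since $S_{\bp_m}$ is $3$-nilpotent and loop-free, so $x^2$ and $x^3$ both evaluate to $0$. So that does not exclude paths. The correct reading: the statement's identities are $\bp_n\approx\bc_n$ plus $\bc_k\approx x^3$ over proper divisors $k$; paths of length $1$ (i.e. $S_{\bp_2}$) may well survive, and that is fine because $S_{\bp_2}=S(ab)$ is a subalgebra of $S_{\bc_n}$ for $n\geq 2$ (the graph is a single edge, a sub-path of the $n$-cycle). More generally any path of length $\leq n-2$ gives a graph semiring that is a subalgebra of $S_{\bc_n}$. So the right conclusion is: every connected component $\mathbb G_i$ of $\mathbb G$ is either a path of length $\leq n-2$ or a cycle of length exactly $n$, and in either case $S_{\mathbb G_i}$ is a subalgebra of $S_{\bc_n}$.

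Finally, $S_{\mathbb G}$ is the $\{0,\omega\}$-direct union of its connected-component semirings $S_{\mathbb G_i}$ by Proposition~\ref{pro25061320}, hence a subalgebra of the $\{0,\omega\}$-direct union of finitely many copies of $S_{\bc_n}$; by Proposition~\ref{scnsln11}(2) (taking $I=\{n\}$, $J=\emptyset$, and noting $S_{\bc_1}$ needs to be adjoined — actually we use part~(1) since $1\notin I=\{n\}$ as $n\geq 2$, so no $S_{\bc_1}$ appears) this direct union lies in $\mathsf{V}(S_{\bc_n}\circ S_{\bc_n})$, and by Lemma~\ref{scnsln1}\eqref{eq25060203} we have $\mathsf{V}(S_{\bc_n}\circ S_{\bc_n})=\mathsf{V}(\bigcup^\omega_{1\leq i\leq 2}S_{\bc_n})=\mathsf{V}(S_{\bc_n})$. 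Therefore $S_{\mathbb G}\in\mathsf{V}(S_{\bc_n})$, completing the reverse inclusion. The main obstacle I anticipate is the bookkeeping around which cycle and path lengths actually survive all the imposed identities — in particular making sure that the identities $\bc_k\approx x^3$ over proper divisors $k$ of $n$ kill precisely the cycles of length a proper divisor of $n$ while leaving cycles of length exactly $n$ (needed so $S_{\bc_n}$ itself is still in the variety) and leaving the short paths (which are harmless subalgebras of $S_{\bc_n}$). This requires a careful combined application of Lemmas~\ref{Scnm}, \ref{lem25061422}, and \ref{sckk|njinqu}.
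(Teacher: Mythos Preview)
Your proposal is correct and follows essentially the same approach as the paper: verify the identities in $S_{\bc_n}$ via Lemmas~\ref{Scnm} and~\ref{lemma250614001} for the forward inclusion, then for the reverse inclusion reduce to finite graph semirings, apply Lemmas~\ref{lem25061422} and~\ref{sckk|njinqu} to see that all paths have length $\leq n-2$ and all cycles have length exactly $n$, and conclude via Proposition~\ref{scnsln11}(1) (equivalently Lemma~\ref{scnsln1}\eqref{eq25060203}). The detour worrying about whether short paths must be excluded is unnecessary---the paper simply notes, as you eventually do, that such paths embed in $S_{\bc_n}$---but this does not affect correctness.
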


\begin{proof}
Let $\mathcal{V}$ be the subvariety of $\mathbf{NF}_3$
determined by $\bp_n\approx \bc_n$ and $\bc_k\approx x^3$, where $k$ runs through the proper divisors of $n$.
It follows from Lemmas~\ref{Scnm} and \ref{lemma250614001}
that $S_{\bc_n}$ is a member of $\mathcal{V}$, and so $\mathsf{V}(S_{\bc_n})$ is a subvariety of $\mathcal{V}$.
Conversely, let $S_{\mathbb G}$ be a finite graph semiring in $\mathcal{V}$.
By Lemmas~\ref{lem25061422} and \ref{sckk|njinqu}, we have that every path in $\mathbb G$ has length at most $n-2$ and every cycle in $\mathbb G$ is of length $n$.
This implies that $S_{\mathbb G}$ is a subalgebra of the $\{0,\omega\}$-direct union of some copies of
$S_{\bc_{n}}$.
By Proposition~\ref{scnsln11}(i) we have that $S_{\mathbb G}$ is a member of $\mathsf{V}(S_{\bc_n})$.
Thus $\mathsf{V}(S_{\bc_n})=\mathcal{V}$.
\end{proof}

\section{The limit subvariety of $\mathbf{NF}_3$}
In this section we show that $\mathbf{NF}_3$ has a unique limit subvariety.
Let us denote by $S_{\bp_{\infty}}$ the graph semiring induced by a path graph of infinite length,
that is, a graph with the vertex set $\{a_k \mid k \geq 1\}$ and the edge set $\{(a_k, a_{k+1}) \mid k \geq 1\}$.
It is straightforward to verify that each $S_{\mathbf{p}_n}$ forms a subalgebra of $S_{\mathbf{p}_\infty}$.
Conversely, every finitely generated subalgebra of $S_{\mathbf{p}_\infty}$ is contained in some $S_{\mathbf{p}_n}$.
This implies that $S_{\mathbf{p}_\infty}$ and $\{S_{\bp_n}\mid n\geq 1\}$ satisfy the same identities.
So we have

\begin{pro}\label{pro25060725}
$\mathsf{V}(S_{\bp_{\infty}})=\mathsf{V}(\{S_{\bp_n}\mid n\geq 1\})$.
\end{pro}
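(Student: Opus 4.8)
The plan is to prove the two inclusions $\mathsf{V}(S_{\bp_{\infty}}) \subseteq \mathsf{V}(\{S_{\bp_n}\mid n\geq 1\})$ and the reverse separately, with the bulk of the work consisting of two routine observations about subalgebras of $S_{\bp_\infty}$. The key underlying fact is that a variety is determined by the identities it satisfies, and that two classes of algebras generate the same variety precisely when they satisfy the same identities; moreover, an identity involving finitely many variables fails in an algebra $A$ if and only if it fails in some finitely generated subalgebra of $A$.

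For the inclusion $\mathsf{V}(\{S_{\bp_n}\mid n\geq 1\}) \subseteq \mathsf{V}(S_{\bp_{\infty}})$, I would first verify that each $S_{\bp_n}$ embeds as a subalgebra of $S_{\bp_\infty}$: indeed, the subset $\{0,\omega,a_1,\ldots,a_n\}$ of $S_{\bp_\infty}$ is closed under the multiplication (since $a_n a_{n+1}=\omega$ is the only product leaving $\{a_1,\dots,a_n\}$, and $a_{n+1}$ is excluded) and under addition, and the resulting flat semiring is isomorphic to $S_{\bp_n}$. Since every subalgebra of $S_{\bp_\infty}$ lies in $\mathsf{V}(S_{\bp_\infty})$, each $S_{\bp_n}$ lies in $\mathsf{V}(S_{\bp_\infty})$, and hence so does the variety they generate.

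For the reverse inclusion it suffices to show that $S_{\bp_\infty}$ satisfies every identity satisfied by all the $S_{\bp_n}$. Suppose $\bu \approx \bv$ is an ai-semiring identity over finitely many variables $x_1,\ldots,x_r$ that fails in $S_{\bp_\infty}$; then there is an assignment of the $x_i$ to elements of $S_{\bp_\infty}$ witnessing the failure. The subalgebra $B$ of $S_{\bp_\infty}$ generated by these finitely many values is finitely generated, and I would observe that any finitely generated subalgebra of $S_{\bp_\infty}$ is contained in some $S_{\bp_n}$: each generator involves only finitely many of the vertices $a_k$, so all generators together involve only vertices $a_k$ with $k \leq N$ for some $N$, and the subalgebra they generate therefore sits inside the copy of $S_{\bp_{N}}$ (or a slightly larger one, to be safe, since a product $a_k a_{k+1}=\omega$ could be formed, but $\omega$ is already present). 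Hence $\bu \approx \bv$ fails in $S_{\bp_n}$ for this $n$, contradicting the hypothesis. Therefore $S_{\bp_\infty}$ and $\{S_{\bp_n}\mid n\geq 1\}$ satisfy exactly the same identities, and the two generated varieties coincide.

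The only mild subtlety — and the step I would be most careful about — is confirming that the subalgebra of $S_{\bp_\infty}$ generated by a finite set of vertices really does land inside some $S_{\bp_n}$; one must check that no product or sum of elements from $\{0,\omega,a_1,\dots,a_N\}$ ever escapes this set, which is immediate from the definition of the graph semiring multiplication (all products are either $0$ or $\omega$) and the fact that $S_{\bp_n}$ is genuinely a subalgebra. Everything else is a direct application of the standard characterization of the variety generated by a class of algebras in terms of its equational theory.
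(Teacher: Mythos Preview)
Your proposal is correct and follows essentially the same approach as the paper: the paper justifies the proposition in the sentence preceding it by noting that each $S_{\bp_n}$ is a subalgebra of $S_{\bp_\infty}$ and that every finitely generated subalgebra of $S_{\bp_\infty}$ is contained in some $S_{\bp_n}$, whence the two classes satisfy the same identities. Your write-up simply fills in the routine details the paper leaves implicit.
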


A graph semiring $S_{\mathbb G}$ is \emph{acyclic} if the graph $\mathbb G$ is acyclic, that is, $\mathbb G$ contains no cycles.
One can easily see that $S_{\bp_{\infty}}$ is an example of an acyclic graph semiring.
We shall use $\mathcal{V}_{ac}$ to denote the subvariety of $\mathbf{NF}_3$ generated by all acyclic graph semirings.

%
%
%

\begin{pro}\label{pro25060701}
The variety $\mathcal{V}_{ac}$ is generated by $S_{\bp_{\infty}}$.
\end{pro}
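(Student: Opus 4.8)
The plan is to show that $\mathcal{V}_{ac} = \mathsf{V}(S_{\bp_{\infty}})$ by establishing both inclusions. The inclusion $\mathsf{V}(S_{\bp_{\infty}}) \subseteq \mathcal{V}_{ac}$ is immediate, since $S_{\bp_{\infty}}$ is itself an acyclic graph semiring. For the reverse inclusion, it suffices to show that every acyclic graph semiring $S_{\mathbb{G}}$ lies in $\mathsf{V}(S_{\bp_{\infty}})$. By Lemma~\ref{lem24121301} and Theorem~\ref{siSg}, the variety $\mathcal{V}_{ac}$ is generated by its subdirectly irreducible members, which are acyclic graph semirings; so it is enough to treat a single acyclic graph semiring $S_{\mathbb{G}}$, and in fact a single acyclic \emph{connected} graph semiring would suffice after invoking the $\{0,\omega\}$-direct union machinery.

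First I would observe that since $\mathbb{G}$ is acyclic, every connected component of $\mathbb{G}$ is a path, so by Proposition~\ref{pro25061320}, $S_{\mathbb{G}}$ is the $\{0,\omega\}$-direct union of path graph semirings $S_{\bp_{m_i}}$. Each $S_{\bp_{m_i}}$ is a subalgebra of $S_{\bp_{\infty}}$, hence lies in $\mathsf{V}(S_{\bp_{\infty}})$. The key step is then to pass from the individual path graph semirings to their $\{0,\omega\}$-direct union while staying inside $\mathsf{V}(S_{\bp_{\infty}})$. Here I would use the techniques of Lemma~\ref{scnsln1} and Proposition~\ref{scnsln11}: the $\{0,\omega\}$-direct union of finitely many copies of a path graph semiring $S_{\bp_n}$ already lies in $\mathsf{V}(S_{\bp_n \circ \cdots})$, which by \eqref{eq250602012} is contained in $\mathsf{V}(S_{\bp_{n+1}}) \subseteq \mathsf{V}(S_{\bp_{\infty}})$; and more generally Proposition~\ref{scnsln11}(1) (with $I = \emptyset$) tells us that a $\{0,\omega\}$-direct union of copies of a fixed acyclic configuration collapses into the variety generated by a bounded one. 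So any \emph{finite} acyclic graph semiring lies in $\mathsf{V}(S_{\bp_{\infty}})$.

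Finally, for an arbitrary (possibly infinite) acyclic graph semiring $S_{\mathbb{G}}$, I would invoke the standard fact that a semiring lies in the variety generated by a class if and only if all its finitely generated subalgebras do (equivalently, use that $\mathsf{V}(S_{\bp_\infty})$ is a variety, hence closed under direct limits/unions of chains, and note every finitely generated subalgebra of $S_{\mathbb{G}}$ sits inside a finite acyclic graph semiring). Combining these observations gives $S_{\mathbb{G}} \in \mathsf{V}(S_{\bp_{\infty}})$, and therefore $\mathcal{V}_{ac} \subseteq \mathsf{V}(S_{\bp_{\infty}})$, completing the proof.

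The main obstacle I anticipate is the bookkeeping in the passage from finitely many path graph semirings to their $\{0,\omega\}$-direct union: one must verify carefully that Proposition~\ref{scnsln11}(1) applies with the cycle-index set $I$ empty and that the resulting bound on path lengths does not escape to infinity when only finitely many components are involved. Once that is in hand, the reduction to the finite case and the direct-limit argument are routine.
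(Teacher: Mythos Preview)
Your approach is correct but differs from the paper's. The paper gives a single direct construction that works uniformly for any (possibly infinite) acyclic graph $\mathbb{G}$: it observes that $S_{\mathbb{G}}$ embeds in $\bigcup_{i\in I}^{\omega} S_{\bp_\infty}$ (where $I$ indexes the components of $\mathbb{G}$), and then realises this $\{0,\omega\}$-direct union explicitly as a quotient of a subalgebra of $\prod_{i\in I} S_{\bp_\infty}$ generated by elements $\alpha_{ik}$ with $\alpha_{ik}(i)=a_k$ and $\alpha_{ik}(j)=a_{k+1}$ for $j\ne i$, modulo the ideal of tuples with a $0$-coordinate. This is the same trick underlying Lemma~\ref{scnsln1}, but applied directly to $S_{\bp_\infty}$ and to an arbitrary index set, so no separate reduction to the finite case is needed. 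Your route instead bootstraps from Lemma~\ref{scnsln1}\eqref{eq250602011}--\eqref{eq250602012} to handle finite acyclic $S_{\mathbb{G}}$ (embedding into $\bigcup_{1\le k\le m}^{\omega} S_{\bp_n}\in\mathsf{V}(S_{\bp_n}\circ S_{\bp_n})\subseteq\mathsf{V}(S_{\bp_{n+1}})\subseteq\mathsf{V}(S_{\bp_\infty})$), and then invokes the local character of equational satisfaction to pass to infinite $S_{\mathbb{G}}$. This is perfectly valid and arguably more economical, since it reuses machinery already in place; the paper's version has the advantage of being self-contained and of exhibiting the infinite $\{0,\omega\}$-direct union in $\mathsf{V}(S_{\bp_\infty})$ in one stroke, without the two-stage finite/infinite split.
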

\begin{proof}
Let $S_{\mathbb G}$ be an acyclic graph semiring,
and let $\{\mathbb G_i\}_{i\in I}$ denote the set of all connected components of $\mathbb G$.
It is easy to see that $S_{\mathbb G}$ is a subalgebra of $\bigcup_{i\in I}^{\omega}S_{\bp_{\infty}}$.
Let $T$ be the algebra of the direct product $\prod_{i\in I} S_{\bp_{\infty}}$ generated by the subset $\{\alpha_{ik}\mid i\in I, k\geq 1\}$, where for each $j\in I$,
\[
\alpha_{ik}(j)=\begin{cases}
  a_k,& j=i,\\
  a_{k+1},& j\neq i.
\end{cases}
\]

If we denote by $J$ the set of all elements of $T$ with a $0$-coordinate,
then $J$ is both a multiplicative ideal and an order-theoretic filter of $T$.
Now it is a routine matter to verify that the quotient algebra $T/J$ is isomorphic to $\bigcup_{i\in I}^{\omega}S_{\bp_{\infty}}$,
and so $S_{\mathbb G}$ is a member of $\mathsf{V}(S_{\bp_{\infty}})$.
Thus $\mathcal{V}_{ac}$ is generated by $S_{\bp_{\infty}}$.
\end{proof}

\begin{lemma}\label{vacbasis}
The variety $\mathcal{V}_{ac}$ is determined within $\mathbf{NF}_3$ by the identities
\begin{equation} \label{bcnx3n1}
\bc_n\approx x^3,~n \geq 1.
\end{equation}
\end{lemma}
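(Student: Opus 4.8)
The plan is to show the two inclusions between $\mathcal{V}_{ac}$ and the subvariety $\mathcal{W}$ of $\mathbf{NF}_3$ defined by the identities \eqref{bcnx3n1}. For one direction, it suffices by Proposition~\ref{pro25060701} to verify that $S_{\bp_\infty}$ satisfies $\bc_n\approx x^3$ for every $n\geq 1$; since $S_{\bp_\infty}$ is acyclic this is immediate from the definition of the graph multiplication (any substitution into $\bc_n$ must close a cycle in the path graph to produce $\omega$, which is impossible, so $\varphi(\bc_n)=0=\varphi(x^3)$). Hence $\mathcal{V}_{ac}\subseteq\mathcal{W}$.

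For the reverse inclusion I would argue that every subdirectly irreducible member of $\mathcal{W}$ lies in $\mathcal{V}_{ac}$. By Lemma~\ref{lem24121301} and Theorem~\ref{siSg}, such an algebra is a graph semiring $S_{\mathbb G}$. I would first reduce to the finitely generated (hence finite) case: a subdirectly irreducible $S_{\mathbb G}$ is generated by finitely many vertices together with $\omega$, so it is a homomorphic image of a finite subalgebra; but a finitely generated subalgebra of $S_{\mathbb G}$ is again a graph semiring on a finite subgraph, and it suffices to place each finite such $S_{\mathbb G}\in\mathcal W$ into $\mathcal{V}_{ac}$. Now if $S_{\mathbb G}\in\mathcal W$ and $\mathbb G$ had a cycle of length $k$, then $S_{\bc_k}$ would be a subalgebra of $S_{\mathbb G}$, hence a member of $\mathcal W$, so $S_{\bc_k}$ would satisfy $\bc_k\approx x^3$; but the substitution $x_i\mapsto a_i$ gives $\varphi(\bc_k)=\omega\neq 0=\varphi(x^3)$, a contradiction. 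Therefore $\mathbb G$ is acyclic, i.e.\ $S_{\mathbb G}$ is an acyclic graph semiring, so $S_{\mathbb G}\in\mathcal{V}_{ac}$. Together with the first inclusion this gives $\mathcal{W}=\mathcal{V}_{ac}$.

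The main obstacle is the passage from arbitrary subdirectly irreducible members of $\mathcal{W}$ to finite ones: $\mathcal{W}$ is not assumed locally finite (indeed it will turn out to contain $S_{\bp_\infty}$), so one cannot simply invoke finiteness of subdirectly irreducibles. The cleanest route is to note that every variety is generated by its \emph{finitely generated} subdirectly irreducible members (a standard consequence of Birkhoff's subdirect representation theorem applied inside the free algebra on finitely many generators), observe that a finitely generated subdirectly irreducible flat semiring in $\mathbf{NF}_3$ is finite—because by Lemma~\ref{SkSk+1} and $3$-nilpotency it has the form $S_{\mathbb G}$ with $\mathbb G$ finitely generated, and a finitely generated graph of bounded degree with components of length at most the generating set size is finite—and then apply the cycle-exclusion argument above to each such finite $S_{\mathbb G}$. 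One must also record that acyclicity is inherited by subgraphs so that these finite pieces genuinely lie in $\mathcal{V}_{ac}$; this is clear from Proposition~\ref{pro25061320}, since each connected component of a finite acyclic $\mathbb G$ is a path and hence a subalgebra of $S_{\bp_\infty}$.
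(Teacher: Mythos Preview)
Your overall strategy is the same as the paper's: for $\mathcal{V}_{ac}\subseteq\mathcal W$ you check that $S_{\bp_\infty}$ satisfies each $\bc_n\approx x^3$ (via Proposition~\ref{pro25060701}), and for $\mathcal W\subseteq\mathcal{V}_{ac}$ you argue that a graph semiring in $\mathcal W$ cannot contain a cycle, since otherwise the subalgebra $S_{\bc_k}$ would have to satisfy $\bc_k\approx x^3$. The cycle-exclusion argument is exactly what the paper does.

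There is one genuine slip. In your second paragraph you assert that ``a subdirectly irreducible $S_{\mathbb G}$ is generated by finitely many vertices together with $\omega$''; this is false, and $S_{\bp_\infty}$ itself is a counterexample sitting inside $\mathcal W$. You catch this in your third paragraph and repair it correctly by passing to finitely generated subdirectly irreducibles and observing that a finitely generated graph semiring is finite (the subalgebra generated by vertices $v_1,\dots,v_k$ is just $\{0,\omega,v_1,\dots,v_k\}$). So the proposal, read as a whole, is correct, but the second paragraph should be rewritten to avoid the false claim rather than stating it and then retracting it.

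For comparison, the paper simply writes ``let $S_{\mathbb G}$ be a \emph{finite} graph semiring in $\mathcal W$'' and proceeds, implicitly using that $\mathbf{NF}_3$ is locally finite: by $3$-nilpotency the free multiplicative reduct on $n$ generators has at most $n+n^2+1$ elements, so the free ai-semiring on $n$ generators is finite, and hence every finitely generated member of $\mathcal W$ is finite. Either justification works; the paper's is slightly more direct.
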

\begin{proof}
Let $\mathcal{W}$ be the subvariety of $\mathbf{NF}_3$ determined  by the identities \eqref{bcnx3n1}.
It is easy to see that $S_{\bp_{\infty}}$ satisfies the identities \eqref{bcnx3n1}.
By Proposition \ref{pro25060701} we have that $\mathcal{V}_{ac}$ is a subvariety of $\mathcal{W}$.
Conversely, let $S_{\mathbb G}$ be a finite graph semiring in $\mathcal{W}$.
Suppose by way of contradiction that $\mathbb G$ is not acyclic.
Then $\mathbb G$ contains some cycle of length $m$, and so $S_{\bc_m}$ lies in $\mathcal{W}$.
This implies that $S_{\bc_m}$ satisfies $\bc_m\approx x^3$, which is a contradiction.
Thus $\mathbb G$ is acyclic and so $S_{\mathbb G}$ is a acyclic graph semiring.
Hence $\mathcal{W}$ is a subvariety of $\mathcal{V}_{ac}$
and so $\mathcal{V}_{ac}=\mathcal{W}$.
\end{proof}

\begin{corollary}\label{everyacyclic}
  Every graph semiring in $\mathcal{V}_{ac}$ is acyclic.
\end{corollary}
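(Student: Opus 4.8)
The plan is to derive the statement by contradiction from the equational description of $\mathcal{V}_{ac}$ obtained in Lemma~\ref{vacbasis}. Suppose $S_{\mathbb G}$ is a graph semiring lying in $\mathcal{V}_{ac}$ but that $\mathbb G$ is not acyclic. By our standing conventions on graphs, every connected component of $\mathbb G$ is a path or a cycle, and a cycle has some finite length $m\geq 1$; fix such a cycle with vertices $a_1,\ldots,a_m$ and edges $(a_1,a_2),\ldots,(a_{m-1},a_m),(a_m,a_1)$.

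The first step is to exhibit $S_{\bc_m}$ inside $S_{\mathbb G}$. Since each vertex of $\mathbb G$ has out-degree and in-degree at most $1$, the only edges among $a_1,\ldots,a_m$ are those of the cycle itself; hence the subset $\{0,\omega,a_1,\ldots,a_m\}$ is closed under the multiplication of $S_{\mathbb G}$ (a product of two of these elements equals $\omega$ when the factors are consecutive cycle vertices and equals $0$ in every other case), and it is trivially closed under the flat addition. By the definition of the cycle graph semiring, this subalgebra is isomorphic to $S_{\bc_m}$. As varieties are closed under subalgebras, $S_{\bc_m}\in\mathcal{V}_{ac}$.

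It remains to reach a contradiction. By Lemma~\ref{vacbasis}, $\mathcal{V}_{ac}$ satisfies $\bc_n\approx x^3$ for all $n\geq 1$; taking $n=m$ shows that $S_{\bc_m}$ satisfies $\bc_m\approx x^3$. On the other hand, Lemma~\ref{Scnm} tells us that $S_{\bc_m}$ satisfies $\bc_m\approx x^3$ precisely when $m$ fails to divide $m$, which is impossible. Hence $\mathbb G$ has no cycles, i.e.\ $S_{\mathbb G}$ is acyclic. I do not expect a genuine obstacle in this argument; the one point deserving care is that Lemma~\ref{vacbasis} is a statement about the whole variety $\mathcal{V}_{ac}$ and therefore applies to $S_{\mathbb G}$ with no finiteness hypothesis on $\mathbb G$, unlike the proof of Lemma~\ref{vacbasis} itself, which only needed to handle finite graph semirings.
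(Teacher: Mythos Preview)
Your proof is correct and follows essentially the same route as the paper: the corollary is recorded without proof immediately after Lemma~\ref{vacbasis}, and the intended argument is exactly the one you give---if a graph semiring in $\mathcal{V}_{ac}$ contained a cycle of length $m$, then $S_{\bc_m}$ would be a subalgebra lying in $\mathcal{V}_{ac}$, contradicting the identity $\bc_m\approx x^3$. Your remark that Lemma~\ref{vacbasis} applies to arbitrary (not just finite) graph semirings is a valid point and handles the infinite case cleanly.
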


\begin{pro}\label{vacnfb}
The variety $\mathcal{V}_{ac}$ is nonfinitely based.
\end{pro}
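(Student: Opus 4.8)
The plan is to show that no finite set of identities valid in $\mathcal{V}_{ac}$ can imply all of the identities $\bc_n \approx x^3$ from Lemma~\ref{vacbasis}, by exhibiting, for each $n$, a semiring in $\mathbf{NF}_3$ that satisfies all ``short'' consequences but fails $\bc_n \approx x^3$. The natural candidate is the cycle graph semiring $S_{\bc_n}$ (or a $\{0,\omega\}$-direct union of such). By Lemma~\ref{Scnm}, $S_{\bc_n}$ fails $\bc_n \approx x^3$ but satisfies $\bc_m \approx x^3$ whenever $n \nmid m$; in particular, taking $n$ to be a large prime, $S_{\bc_n}$ satisfies $\bc_m \approx x^3$ for all $m < n$. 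So if we can show that any identity in few enough variables that holds in $\mathcal{V}_{ac}$ also holds in $S_{\bc_n}$ for $n$ large, we are done.

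Concretely, I would argue as follows. Suppose for contradiction that $\mathcal{V}_{ac}$ has a finite basis $\Sigma$; since $\mathcal{V}_{ac} \subseteq \mathbf{NF}_3$ and $\mathbf{NF}_3$ is finitely based (Proposition~\ref{fkjidi}(2)), we may assume $\Sigma$ consists of identities of $\mathbf{NF}_3$ together with finitely many extra identities, each involving at most $N$ variables for some fixed $N$. Pick a prime $p > N$. I claim $S_{\bc_p} \models \Sigma$. Indeed, let $\bu \approx \bv \in \Sigma$ be an identity over variables $x_1,\dots,x_k$ with $k \le N < p$, and let $\psi\colon \{x_1,\dots,x_k\} \to S_{\bc_p}$ be any substitution. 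The key point is that if $\psi(\bu) = \omega$, then some word $\bw = x_{i_1}\cdots x_{i_\ell}$ appearing in $\bu$ evaluates to $\omega$, which forces the vertices $\psi(x_{i_1}),\dots,\psi(x_{i_\ell})$ to trace a directed walk in the $p$-cycle; since only $k < p$ vertices are available, such a walk never closes up into the full cycle, so it behaves exactly as a walk in the infinite path $S_{\bp_\infty}$. Using $\mathcal{V}_{ac} = \mathsf{V}(S_{\bp_\infty})$ (Proposition~\ref{pro25060701}) together with the fact that $\bu \approx \bv$ holds in $S_{\bp_\infty}$, I would deduce that $\psi(\bu) = \omega \iff \psi(\bv) = \omega$, and hence $\psi(\bu) = \psi(\bv)$. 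This shows $S_{\bc_p} \in \mathcal{V}_{ac}$, contradicting Corollary~\ref{everyacyclic} (since the $p$-cycle is not acyclic) — equivalently, contradicting the fact that $S_{\bc_p}$ fails $\bc_p \approx x^3 \in \Sigma$'s consequences.

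The main obstacle is making precise the claim that a substitution into $S_{\bc_p}$ using fewer than $p$ variables ``looks like'' a substitution into $S_{\bp_\infty}$: one must check that a directed walk on the $p$-cycle visiting at most $p-1$ distinct vertices, with the incidence pattern dictated by which products equal $\omega$, can be lifted to a walk on the infinite path that realizes the same $\omega$-pattern on every subword of $\bu$ and $\bv$ simultaneously. This requires a short combinatorial lemma: in the cycle $\bc_p$, a set of at most $p-1$ vertices together with the edges among them forms an acyclic subgraph (a disjoint union of paths), which therefore embeds into $\bp_\infty$; pulling $\psi$ back along this embedding gives a substitution $\psi'$ into $S_{\bp_\infty}$ with $\psi'(\bt) = \omega \iff \psi(\bt) = \omega$ for every word $\bt$ over $x_1,\dots,x_k$, and then $\psi(\bu) = \psi(\bv)$ follows from $S_{\bp_\infty} \models \bu \approx \bv$. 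Alternatively, one can bypass the embedding argument by invoking the Compactness Theorem for equational logic to reduce a hypothetical finite basis to $\mathcal{V}_{ac} = \mathsf{V}(\{S_{\bc_q} : q \text{ prime}, q \le M\} \cup \cdots)$ for some $M$ and deriving a contradiction with Lemma~\ref{Scnm} directly; I would present whichever is cleaner, but I expect the walk-lifting lemma to be the crux in either case.
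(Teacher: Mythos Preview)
Your primary approach (the walk-lifting / subalgebra argument) is correct, but it is not the paper's proof. The paper argues much more briefly: assuming $\mathcal{V}_{ac}$ were finitely based, Compactness applied to the explicit basis $\{\bc_n \approx x^3 : n \ge 1\}$ of Lemma~\ref{vacbasis} shows that finitely many of these, say $\bc_k \approx x^3$ for $1 \le k \le m$, already determine $\mathcal{V}_{ac}$ within $\mathbf{NF}_3$. But by Lemma~\ref{Scnm}, $S_{\bc_{m+1}}$ satisfies each $\bc_k \approx x^3$ with $k \le m$ (since $m+1 \nmid k$), so $S_{\bc_{m+1}} \in \mathcal{V}_{ac}$, contradicting Corollary~\ref{everyacyclic}. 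This is precisely the clean version of the ``alternative'' you sketch at the end (your phrasing in terms of $\mathsf{V}(\{S_{\bc_q}\})$ is garbled; Compactness is applied on the identity side, not the generator side).

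Your main route is longer but also sound, and it yields a slightly stronger statement: for every $N$ there is a semiring in $\mathbf{NF}_3 \setminus \mathcal{V}_{ac}$ satisfying every $\le N$-variable identity of $\mathcal{V}_{ac}$, proved without reference to the explicit basis. Two small clean-ups. First, you do not need $p$ prime; any $p > N$ works. Second, the ``walk-lifting to $S_{\bp_\infty}$'' can be replaced by the one-line observation that the subalgebra of $S_{\bc_p}$ generated by at most $N < p$ elements is itself an acyclic graph semiring (the induced subgraph on a proper vertex subset of a $p$-cycle is a disjoint union of paths), hence lies in $\mathcal{V}_{ac}$ and satisfies $\bu \approx \bv$; this immediately gives $\psi(\bu) = \psi(\bv)$ without any separate combinatorial lemma.
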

\begin{proof}
Assume, for contradiction, that $\mathcal{V}_{ac}$ is finitely based.
It follows from Lemma~\ref{vacbasis} and the Compactness Theorem for Equational Logic~\cite[Exercise 10 in \S14]{bs}
that $\mathcal{V}_{ac}$ is the subvariety of $\mathbf{NF}_3$ determined by the identities
\begin{equation}\label{ckx31km}
  \bc_k\approx x^3,~ 1\leq k\leq m
\end{equation}
for some $m$.
By Lemma~\ref{Scnm}, the cycle graph semiring $S_{\bc_{m+1}}$ satisfies the identities \eqref{ckx31km}.
This implies that $S_{\bc_{m+1}}$ is a member of $\mathcal{V}_{ac}$, which contradicts Corollary \ref{everyacyclic}.
So $\mathcal{V}_{ac}$ is nonfinitely based.
\end{proof}

\begin{corollary}\label{coro25060720}
The variety $\mathcal{V}_{ac}$ is not finitely generated.
\end{corollary}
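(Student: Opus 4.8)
*The variety $\mathcal{V}_{ac}$ is not finitely generated.*

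\textbf{Proof proposal.} The plan is to read this off from the results already established, with no new work required. The quickest route is a one-line argument by contradiction: suppose $\mathcal{V}_{ac}$ were finitely generated. Since $\mathcal{V}_{ac}$ is a subvariety of $\mathbf{NF}_3$, Theorem~\ref{coro25060115} would force $\mathcal{V}_{ac}$ to be a Cross variety. By the definition of a Cross variety, this means in particular that $\mathcal{V}_{ac}$ is finitely based. But Proposition~\ref{vacnfb} asserts precisely that $\mathcal{V}_{ac}$ is nonfinitely based, a contradiction. Hence $\mathcal{V}_{ac}$ is not finitely generated.

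An alternative, essentially self-contained route (useful if one prefers not to invoke the term ``Cross variety'' directly) is to count subvarieties. By Proposition~\ref{pro25060701}, $\mathcal{V}_{ac}=\mathsf{V}(S_{\bp_\infty})$ contains $S_{\bp_n}$ for every $n\ge 1$, hence contains each $\mathsf{V}(S_{\bp_n})$. By Lemma~\ref{scnsln1}, parts \eqref{eq250602011} and \eqref{eq250602012}, we have the strict inclusions
\[
\mathsf{V}(S_{\bp_n})<\mathsf{V}(S_{\bp_n}\circ S_{\bp_n})<\mathsf{V}(S_{\bp_{n+1}})
\]
for all $n\ge 2$, so $\mathcal{V}_{ac}$ has an infinite strictly ascending chain of subvarieties and thus infinitely many subvarieties. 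On the other hand, if $\mathcal{V}_{ac}$ were finitely generated then by Theorem~\ref{coro25060115} it would be Cross and therefore, by \cite[Theorem 2.1]{mv}, would have only finitely many subvarieties; this contradiction again yields the claim.

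I do not expect any real obstacle here: all the substance has already been carried out in Proposition~\ref{vacnfb} (nonfinite basability of $\mathcal{V}_{ac}$, via Lemma~\ref{vacbasis} and Corollary~\ref{everyacyclic}), Proposition~\ref{pro25060701} (the generation of $\mathcal{V}_{ac}$ by $S_{\bp_\infty}$), and Theorem~\ref{coro25060115}. The only point requiring a moment's care is to note explicitly that ``Cross'' subsumes ``finitely based,'' so that the contradiction with Proposition~\ref{vacnfb} is genuine; after that the proof is immediate.
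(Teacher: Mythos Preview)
Your first argument is correct and is essentially the paper's own proof: the paper derives the corollary immediately from Corollary~\ref{coro25060310} (every finite $3$-nilpotent flat semiring is finitely based) together with Proposition~\ref{vacnfb}, which is the same ``finitely generated $\Rightarrow$ finitely based $\Rightarrow$ contradiction'' route you take via Theorem~\ref{coro25060115}. Your alternative subvariety-counting argument is also valid but unnecessary here.
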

\begin{proof}
This follows from Corollary \ref{coro25060310} and Proposition \ref{vacnfb} immediately.
\end{proof}

\begin{pro}\label{lvvac}
The subvariety lattice of the variety $\mathcal{V}_{ac}$ is
\[
\mathbf{T}<\mathsf{V}(S_{\bp_1})<\mathsf{V}(S_{\bp_1}\circ S_{\bp_1})<\cdots <\mathsf{V}(S_{\bp_n})<\mathsf{V}(S_{\bp_n}\circ S_{\bp_n})<\cdots<\mathcal{V}_{ac},
\]
which is isomorphic to $\boldsymbol{\omega}\oplus \mathbf{1}$, the linear sum of the chain $\boldsymbol\omega$ of natural numbers under the usual order and the trivial one-element lattice $\mathbf{1}$ corresponds to $\mathcal{V}_{ac}$.
\end{pro}

\begin{proof}
Let $\mathcal{W}$ be a subvariety of $\mathcal{V}_{ac}$.
It follows from Corollary~\ref{everyacyclic} that $\mathcal{W}$ can be
generated by a family $(S_{\mathbb G_i})_{i\in I}$ of finite acyclic graph semirings.
For any $i \in I$, let $n_i$ denote the maximum of lengths of all paths in $\mathbb G_i$.
Consider the following two cases:

\textbf{Case 1.} The set $\{n_i \mid i\in I\}$ has a maximum element $n$.

\textbf{Subcase 1.1.} There are at least two paths of length $n$ in some graph $\mathbb G_i$.
It is obvious that $S_{\bp_n}\circ S_{\bp_n}$ is a member of $\mathcal{W}$,
and so $\mathsf{V}(S_{\bp_n}\circ S_{\bp_n})$ is a subvariety of $\mathcal{W}$.
On the other hand, for any $i\in I$, we may assume that $\mathbb G_i$ has exactly $n_i$ connected components.
Then $S_{\mathbb G_i}$ is a subalgebra of  $\bigcup_{1\leq k\leq n_i}^{\omega} S_{\bp_n}$.
It follows from Lemma~\ref{scnsln1}\eqref{eq250602011} that $S_{\mathbb G_i}$ lies in $\mathsf{V}(S_{\bp_n}\circ S_{\bp_n})$.
So $\mathcal{W} = \mathsf{V}(S_{\bp_n}\circ S_{\bp_n})$.

\textbf{Subcase 1.2.} There is at most one path of length $n$ in any graph $\mathbb G_i$.
It is easy to see that $S_{\bp_n}$ lies in $\mathcal{W}$.
Conversely, for any $i\in I$, suppose that $\mathbb G_i$ has exactly $n_i$ connected components.
Then $S_{\mathbb G_i}$ is a subalgebra of  $S_{\bp_{n}}\circ \bigcup_{1\leq k\leq n_i}^{\omega} S_{\bp_{n-1}}$.
It follows from Lemma~\ref{scnsln1}\eqref{eq250602012} that $S_{\mathbb G_i}$ is a member of $\mathsf{V}(S_{\bp_n})$.
Thus $\mathcal{W} = \mathsf{V}(S_{\bp_n})$.

\textbf{Case 2.} The set $\{n_i \mid i\in I\}$ has no upper bound.
It is easy to see that $S_{\bp_n}$ lies in $\mathcal{W}$ for all $n\geq 2$.
By Proposition \ref{pro25060725}, we have that $\mathcal{W}=\mathcal{V}_{ac}$.

So we have shown that the subvariety lattice of the variety $\mathcal{V}_{ac}$ consists of $\mathcal{V}_{ac}$,
$\mathsf{V}(S_{\bp_n})$ and $\mathsf{V}(S_{\bp_n}\circ S_{\bp_n})$ for all $n\geq 1$.
By Lemma~\ref{scnsln1}(i) one can deduce that
\[
\mathbf{T}<\mathsf{V}(S_{\bp_1})<\mathsf{V}(S_{\bp_1}\circ S_{\bp_1})<\cdots <\mathsf{V}(S_{\bp_n})<\mathsf{V}(S_{\bp_n}\circ S_{\bp_n})<\cdots<\mathcal{V}_{ac}.
\]
Therefore, the subvariety lattice of $\mathcal{V}_{ac}$ is isomorphic to
the linear sum of the lattice of all natural numbers under the usual order and the trivial one-element lattice.
\end{proof}

\begin{corollary}\label{coro250607001}
Every proper subvariety of $\mathcal{V}_{ac}$ is finitely based.
\end{corollary}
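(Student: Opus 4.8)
The plan is to read the proper subvarieties of $\mathcal{V}_{ac}$ directly off the structural description established in Proposition~\ref{lvvac}, and then observe that each of them is finitely generated, hence finitely based.

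First I would invoke Proposition~\ref{lvvac}: the subvariety lattice of $\mathcal{V}_{ac}$ consists precisely of $\mathcal{V}_{ac}$ together with $\mathsf{V}(S_{\bp_n})$ and $\mathsf{V}(S_{\bp_n}\circ S_{\bp_n})$ for all $n\geq 1$. Consequently, if $\mathcal{W}$ is a proper subvariety of $\mathcal{V}_{ac}$, then $\mathcal{W}=\mathsf{V}(S_{\bp_n})$ or $\mathcal{W}=\mathsf{V}(S_{\bp_n}\circ S_{\bp_n})$ for some integer $n\geq 1$. In either case $\mathcal{W}$ is generated by a single finite graph semiring, namely $S_{\bp_n}$ or $S_{\bp_n}\circ S_{\bp_n}$, both of which are finite $3$-nilpotent flat semirings in $\mathbf{NF}_3$; in particular $\mathcal{W}$ is a finitely generated subvariety of $\mathbf{NF}_3$.

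Next I would apply Theorem~\ref{coro25060115}: every finitely generated subvariety of $\mathbf{NF}_3$ is a Cross variety, hence finitely based. (Alternatively, one may cite Corollary~\ref{coro25060310} applied to the finite $3$-nilpotent flat semiring generating $\mathcal{W}$.) This yields immediately that $\mathcal{W}$ is finitely based, which is the desired conclusion.

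There is essentially no obstacle to overcome: the corollary is a direct consequence of the lattice computation in Proposition~\ref{lvvac} combined with the Cross-variety theorem. The only point worth emphasizing is the contrast with $\mathcal{V}_{ac}$ itself, which by Proposition~\ref{vacnfb} is nonfinitely based and by Corollary~\ref{coro25060720} is not finitely generated; thus $\mathcal{V}_{ac}$ sits as a limit variety atop the chain $\mathsf{V}(S_{\bp_1})<\mathsf{V}(S_{\bp_1}\circ S_{\bp_1})<\cdots<\mathsf{V}(S_{\bp_n})<\mathsf{V}(S_{\bp_n}\circ S_{\bp_n})<\cdots$ of Cross varieties, and this corollary records exactly that every variety strictly below $\mathcal{V}_{ac}$ is finitely based.
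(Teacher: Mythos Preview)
Your proposal is correct and follows essentially the same approach as the paper: invoke Proposition~\ref{lvvac} to identify every proper subvariety of $\mathcal{V}_{ac}$ as $\mathsf{V}(S_{\bp_n})$ or $\mathsf{V}(S_{\bp_n}\circ S_{\bp_n})$, then apply the finite basis result for finite $3$-nilpotent flat semirings. The paper cites Corollary~\ref{coro25060310} directly, while you go through Theorem~\ref{coro25060115} (and mention Corollary~\ref{coro25060310} as an alternative), but this is the same argument.
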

\begin{proof}
This is a consequence of Proposition \ref{lvvac} and Corollary \ref{coro25060310}.
\end{proof}

By Proposition \ref{vacnfb} and Corollary \ref{coro250607001} we immediately obtain
\begin{pro}\label{pro25060715}
$\mathcal{V}_{ac}$ is a limit variety.
\end{pro}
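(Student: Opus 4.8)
The plan is to deduce Proposition~\ref{pro25060715} directly from the two facts already established: that $\mathcal{V}_{ac}$ is nonfinitely based (Proposition~\ref{vacnfb}) and that every proper subvariety of $\mathcal{V}_{ac}$ is finitely based (Corollary~\ref{coro250607001}). Recall that a \emph{limit variety} is, by definition, a minimal nonfinitely based variety, i.e.\ a nonfinitely based variety all of whose proper subvarieties are finitely based.

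First I would note that $\mathcal{V}_{ac}$ is indeed nonfinitely based, which is exactly the content of Proposition~\ref{vacnfb}. Next, to check minimality, I would let $\mathcal{W}$ be an arbitrary proper subvariety of $\mathcal{V}_{ac}$ and invoke Corollary~\ref{coro250607001} to conclude that $\mathcal{W}$ is finitely based. Since every proper subvariety of $\mathcal{V}_{ac}$ is finitely based while $\mathcal{V}_{ac}$ itself is not, no proper subvariety of $\mathcal{V}_{ac}$ can be nonfinitely based; hence $\mathcal{V}_{ac}$ is a minimal nonfinitely based variety, that is, a limit variety. This completes the argument.

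There is essentially no obstacle here: the proposition is a one-line corollary of the preceding two results, exactly as the excerpt's lead-in sentence (``By Proposition \ref{vacnfb} and Corollary \ref{coro250607001} we immediately obtain'') indicates. The only subtlety worth a brief remark is that the definition of limit variety quantifies over \emph{all} proper subvarieties, so one should be explicit that Corollary~\ref{coro250607001} applies uniformly to each of them — but since that corollary is itself stated for every proper subvariety, nothing further is needed. All the real work has already been done in building up $\mathcal{V}_{ac}$, computing its subvariety lattice in Proposition~\ref{lvvac}, and establishing the non-finite-basis property in Proposition~\ref{vacnfb}.
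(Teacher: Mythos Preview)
Your proposal is correct and follows exactly the paper's approach: the paper presents this proposition with no written proof beyond the lead-in sentence invoking Proposition~\ref{vacnfb} and Corollary~\ref{coro250607001}, and you have simply spelled out that immediate deduction.
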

\begin{remark}
The limit varieties in~\cite{rjzl} are either group based, contain only degenerate flat semirings (by \cite[Theorem 4.1]{rjzl}) or are easily seen to satisfy \(\bp_{3}\approx x^3\) (in the case of examples from \cite[Section 4.2]{rjzl}). It follows that the limit variety \(\mathcal{V}_{ac}\) is distinct from all of these, and so is a new example.

\end{remark}
\begin{pro}\label{fgsvvac}
Let $\mathcal{V}$ be a subvariety of $\mathbf{NF}_3$.
Then $\mathcal{V}$ is finitely generated if and only if it does not contain $\mathcal{V}_{ac}$.
\end{pro}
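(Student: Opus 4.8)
The plan is to derive the statement from results already established, with essentially no new computation.

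For the direction ``$\mathcal{V}$ finitely generated $\Rightarrow$ $\mathcal{V}_{ac}\not\subseteq\mathcal{V}$'' I would argue by contraposition. Assume $\mathcal{V}_{ac}\subseteq\mathcal{V}$ and, for contradiction, that $\mathcal{V}$ is finitely generated. Then $\mathcal{V}$ is a Cross variety by Theorem~\ref{coro25060115}, and hence has only finitely many subvarieties; but Proposition~\ref{lvvac} shows that $\mathcal{V}_{ac}$ has infinitely many subvarieties, a contradiction. (Equivalently, one may use that every subvariety of a Cross variety is again Cross, so $\mathcal{V}_{ac}$ would be finitely generated, contradicting Corollary~\ref{coro25060720}.) Therefore $\mathcal{V}_{ac}\not\subseteq\mathcal{V}$.

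For the converse I would suppose $\mathcal{V}_{ac}\not\subseteq\mathcal{V}$ and reduce to Corollary~\ref{fgsubkehua}, which says that $\mathcal{V}$ is finitely generated as soon as some path semiring $S_{\bp_m}$ with $m\geq 2$ fails to lie in $\mathcal{V}$. So it suffices to exhibit such an $m$. If, on the contrary, $S_{\bp_m}\in\mathcal{V}$ for every $m\geq 2$, then --- since $S_{\bp_1}\cong S(a)$ embeds as a subalgebra of $S_{\bp_2}$ --- the variety $\mathcal{V}$ would contain all of $\{S_{\bp_n}\mid n\geq 1\}$, hence would contain $\mathsf{V}(\{S_{\bp_n}\mid n\geq 1\})$, which by Propositions~\ref{pro25060725} and~\ref{pro25060701} equals $\mathsf{V}(S_{\bp_\infty})=\mathcal{V}_{ac}$; this contradicts the hypothesis. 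Hence some $S_{\bp_m}$ lies outside $\mathcal{V}$, and Corollary~\ref{fgsubkehua} gives that $\mathcal{V}$ is finitely generated.

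I do not anticipate a genuine obstacle: both implications are short. The one point worth stating carefully is that it really is enough to exclude a single path semiring --- this is the content of Corollary~\ref{fgsubkehua} (itself resting on Proposition~\ref{pro25061501}) --- together with the observation that the equality $\mathsf{V}(\{S_{\bp_n}\mid n\geq 1\})=\mathcal{V}_{ac}$ uses only the finite path semirings, so that no appeal to membership of $S_{\bp_\infty}$ in $\mathcal{V}$ is needed.
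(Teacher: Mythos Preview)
Your proof is correct. The converse direction is essentially identical to the paper's: both reduce to Corollary~\ref{fgsubkehua} by using $\mathcal{V}_{ac}=\mathsf{V}(\{S_{\bp_n}\mid n\geq 1\})$ (Propositions~\ref{pro25060725} and~\ref{pro25060701}) to extract some $S_{\bp_m}\notin\mathcal{V}$.

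For the forward direction the paper takes a more elementary route than yours. Instead of invoking Theorem~\ref{coro25060115} and counting subvarieties, the paper uses Proposition~\ref{pro25061501} to place the finitely generated $\mathcal{V}$ inside some $\mathcal{V}_n$, so that $\mathcal{V}$ satisfies the identity $\bp_n\approx\bc_n$; since $S_{\bp_n}$ fails this identity, $S_{\bp_n}\notin\mathcal{V}$ and hence $\mathcal{V}_{ac}\not\subseteq\mathcal{V}$. Your argument via the Cross property (and the infinite chain in $\mathcal{L}(\mathcal{V}_{ac})$ from Proposition~\ref{lvvac}, or alternatively Corollary~\ref{coro25060720}) is perfectly valid and perhaps more conceptual, but it leans on the full strength of Theorem~\ref{coro25060115}, whereas the paper's version needs only the containment $\mathcal{V}\subseteq\mathcal{V}_n$ and a single identity check.
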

\begin{proof}
Let $\mathcal{V}$ be a finitely generated subvariety of $\mathbf{NF}_3$.
By Proposition~\ref{fgsubvariety}, $\mathcal{V}$ is contained in some $\mathcal{V}_n$,
and so $\mathcal{V}$ satisfies the identity~\eqref{id26060101}.
Since the path graph semiring $S_{\bp_{n}}$ fails to satisfy the identity~\eqref{id26060101},
it follows that $S_{\bp_n}$ does not belong to $\mathcal{V}$.
Therefore, $\mathcal{V}$ does not contain $\mathcal{V}_{ac}$.

Conversely, suppose that $\mathcal{V}_{ac}$ is not contained in $\mathcal{V}$.
By Proposition~\ref{pro25060725}, there exists an integer $m\geq 2$ such that $S_{\bp_m}$ does not lie in $\mathcal{V}$.
It then follows from Corollary~\ref{fgsubkehua} that $\mathcal{V}$ is finitely generated.
\end{proof}

\begin{remark}
Corollary $\ref{coro25060720}$ and Proposition~$\ref{fgsvvac}$ together show that $\mathcal{V}_{ac}$
is the smallest not-finitely generated subvariety of $\mathbf{NF}_3$.
\end{remark}
Now we are ready to prove the main result of this section.
\begin{thm}
$\mathcal{V}_{ac}$ is the unique limit subvariety of $\mathbf{NF}_3$.
\end{thm}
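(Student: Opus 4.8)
The plan is to show that every nonfinitely based subvariety $\mathcal{V}$ of $\mathbf{NF}_3$ contains $\mathcal{V}_{ac}$; combined with Proposition~\ref{pro25060715} (which says $\mathcal{V}_{ac}$ is itself a limit variety) this yields that $\mathcal{V}_{ac}$ is the unique limit subvariety. Indeed, if $\mathcal{V}$ is any limit subvariety of $\mathbf{NF}_3$, then $\mathcal{V}$ is nonfinitely based, so by the dichotomy below $\mathcal{V}_{ac}\subseteq\mathcal{V}$; but $\mathcal{V}_{ac}$ is nonfinitely based by Proposition~\ref{vacnfb}, and minimality of the limit variety $\mathcal{V}$ forces $\mathcal{V}=\mathcal{V}_{ac}$.

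The core dichotomy is precisely Proposition~\ref{fgsvvac}: a subvariety $\mathcal{V}$ of $\mathbf{NF}_3$ is finitely generated if and only if it does not contain $\mathcal{V}_{ac}$. So suppose $\mathcal{V}$ is a subvariety of $\mathbf{NF}_3$ that does \emph{not} contain $\mathcal{V}_{ac}$. By Proposition~\ref{fgsvvac}, $\mathcal{V}$ is finitely generated, and then by Theorem~\ref{coro25060115} (every finitely generated subvariety of $\mathbf{NF}_3$ is a Cross variety) $\mathcal{V}$ is a Cross variety; in particular $\mathcal{V}$ is finitely based. This shows that any nonfinitely based subvariety of $\mathbf{NF}_3$ must contain $\mathcal{V}_{ac}$, which is the only ingredient needed.

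Assembling these pieces: let $\mathcal{V}$ be a limit subvariety of $\mathbf{NF}_3$. Since $\mathcal{V}$ is nonfinitely based, the previous paragraph gives $\mathcal{V}_{ac}\subseteq\mathcal{V}$. Now $\mathcal{V}_{ac}$ is nonfinitely based (Proposition~\ref{vacnfb}), so it contains a limit variety by Zorn's lemma; but in fact $\mathcal{V}_{ac}$ is already a limit variety (Proposition~\ref{pro25060715}), so $\mathcal{V}_{ac}$ is a nonfinitely based subvariety of $\mathcal{V}$. By the minimality defining a limit variety, $\mathcal{V}=\mathcal{V}_{ac}$. Conversely $\mathcal{V}_{ac}$ is a limit subvariety of $\mathbf{NF}_3$ by Proposition~\ref{pro25060715}. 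Hence $\mathcal{V}_{ac}$ is the unique limit subvariety of $\mathbf{NF}_3$.

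Since all the hard work (the Cross property of finitely generated subvarieties in Theorem~\ref{coro25060115}, the finite-generation criterion in Proposition~\ref{fgsvvac}, and the fact that $\mathcal{V}_{ac}$ is a limit variety in Proposition~\ref{pro25060715}) has already been carried out, there is no real obstacle remaining; the argument is a short logical synthesis of those results. The only point requiring a moment's care is the direction of the implications — one must observe that ``nonfinitely based $\Rightarrow$ not finitely generated'' in $\mathbf{NF}_3$ is exactly the contrapositive of ``finitely generated $\Rightarrow$ Cross $\Rightarrow$ finitely based'', and then invoke the set-theoretic minimality in the definition of a limit variety to collapse $\mathcal{V}$ onto $\mathcal{V}_{ac}$.
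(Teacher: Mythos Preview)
Your proof is correct and follows essentially the same route as the paper: both argue that any nonfinitely based subvariety of $\mathbf{NF}_3$ must contain $\mathcal{V}_{ac}$ by combining Theorem~\ref{coro25060115} (finitely generated $\Rightarrow$ Cross $\Rightarrow$ finitely based) with Proposition~\ref{fgsvvac}, and then invoke Proposition~\ref{pro25060715} together with the minimality of a limit variety to conclude. Your write-up is just more explicit about the final minimality step than the paper's terse version.
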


\begin{proof}
From Proposition~\ref{pro25060715} we know that $\mathcal{V}_{ac}$ is a limit variety.
Let $\mathcal{V}$ be an arbitrary nonfinitely based subvariety of $\mathbf{NF}_3$.
By Corollary~\ref{coro25060115} we deduce that $\mathcal{V}$ is not finitely generated.
It follows from Proposition~\ref{fgsvvac} that $\mathcal{V}_{ac}$ is a subvariety of $\mathcal{V}$.
Thus $\mathcal{V}_{ac}$ is the unique limit subvariety of $\mathbf{NF}_3$ as required.
\end{proof}

\begin{corollary}
The variety $\mathcal{V}_{ac}$ is not a Cross variety, but every proper subvariety of $\mathcal{V}_{ac}$ is a Cross variety.
\end{corollary}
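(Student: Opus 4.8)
The plan is to assemble the corollary directly from the structural results already in place, so the argument is essentially a bookkeeping exercise. First I would dispose of the claim that $\mathcal{V}_{ac}$ is not a Cross variety. By the characterization recalled in Section~3, every Cross variety is finitely based; but Proposition~\ref{vacnfb} asserts that $\mathcal{V}_{ac}$ is nonfinitely based, so $\mathcal{V}_{ac}$ cannot be Cross. (Alternatively, a Cross variety is finitely generated, whereas Corollary~\ref{coro25060720} says $\mathcal{V}_{ac}$ is not finitely generated; either observation settles the first half.)

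For the second half, the key input is the complete description of the subvariety lattice of $\mathcal{V}_{ac}$ supplied by Proposition~\ref{lvvac}: apart from $\mathcal{V}_{ac}$ itself, every subvariety is of the form $\mathsf{V}(S_{\bp_n})$ or $\mathsf{V}(S_{\bp_n}\circ S_{\bp_n})$ for some $n\geq 1$. In particular every proper subvariety of $\mathcal{V}_{ac}$ is finitely generated, being generated by a single finite flat semiring. Since such a subvariety is also a subvariety of $\mathbf{NF}_3$, Theorem~\ref{coro25060115} applies and tells us it is a Cross variety. Combining the two halves gives the stated corollary.

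I do not anticipate a genuine obstacle: all the real work has been carried out in Propositions~\ref{vacnfb} and~\ref{lvvac} and in Theorem~\ref{coro25060115}. The only point that needs care is to use Proposition~\ref{lvvac} in its full strength, namely that the list of subvarieties it gives is exhaustive, so that no proper subvariety of $\mathcal{V}_{ac}$ escapes the argument; once that is granted, finite generation and hence the Cross property follow at once.
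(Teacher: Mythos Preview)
Your proposal is correct and matches the paper's implicit reasoning: the corollary is stated without proof in the paper, and the natural justification is precisely the one you give, combining Proposition~\ref{vacnfb} (or Corollary~\ref{coro25060720}) for the first half with Proposition~\ref{lvvac} and Theorem~\ref{coro25060115} for the second.
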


We end this section by providing finite equational bases for $\mathsf{V}(S_{\bp_n}\circ S_{\bp_n})$
and $\mathsf{V}(S_{\bp_n})$, although both of them have been known to be finitely based.

\begin{pro}\label{bpnbpnfb}
$\mathsf{V}(S_{\bp_n}\circ S_{\bp_n})$ is the subvariety of $\mathbf{NF}_3$
determined by the identities
\begin{equation}\label{id25060801}
\bp_{n+1}\approx x^3.
\end{equation}
\end{pro}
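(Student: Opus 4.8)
The plan is to prove the two inclusions between $\mathsf{V}(S_{\bp_n}\circ S_{\bp_n})$ and the subvariety $\mathcal{W}$ of $\mathbf{NF}_3$ determined by \eqref{id25060801}, following the same pattern used in Lemma~\ref{vacbasis} and in the proposition on $\mathsf{V}(S_{\bc_n})$ above.

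For $\mathsf{V}(S_{\bp_n}\circ S_{\bp_n})\subseteq\mathcal{W}$ I would check directly that $S_{\bp_n}\circ S_{\bp_n}$ satisfies \eqref{id25060801}. Its underlying graph is a disjoint union of two paths, each with $n$ vertices; this graph is acyclic and every vertex has in- and out-degree at most $1$, so every walk in it is already a path, and in particular there is no walk of length $n$. In a graph semiring the term $\bp_{n+1}$ takes the value $\omega$ precisely when the images of $x_1,\dots,x_{n+1}$ trace a walk of length $n$, and $0$ otherwise; hence here $\bp_{n+1}$ evaluates to $0$ under every substitution, and so does $x^3$ since the semiring is $3$-nilpotent. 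As $S_{\bp_n}\circ S_{\bp_n}$ is a $3$-nilpotent flat semiring it lies in $\mathbf{NF}_3$, hence in $\mathcal{W}$, giving the inclusion of the generated varieties.

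For $\mathcal{W}\subseteq\mathsf{V}(S_{\bp_n}\circ S_{\bp_n})$, since every variety is generated by its subdirectly irreducible members it suffices to show that each subdirectly irreducible member of $\mathcal{W}$ lies in $\mathsf{V}(S_{\bp_n}\circ S_{\bp_n})$. By Lemma~\ref{lem24121301} and Theorem~\ref{siSg}, such a member is, if nontrivial, isomorphic to a graph semiring $S_{\mathbb G}$. The main step is a forbidden-subalgebra analysis of $\mathbb G$: if $\mathbb G$ contained a cycle of length $m$, then $S_{\bc_m}$ would be a subalgebra of $S_{\mathbb G}$ and hence satisfy \eqref{id25060801}, but winding $x_1,\dots,x_{n+1}$ around the cycle forces $\bp_{n+1}\mapsto\omega\neq 0$, a contradiction; so $\mathbb G$ is acyclic. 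Similarly, if $\mathbb G$ had a path with $n+1$ vertices, then $S_{\bp_{n+1}}$ would be a subalgebra and satisfy \eqref{id25060801}, whereas the substitution $x_i\mapsto a_i$ yields $\bp_{n+1}\mapsto\omega\neq 0$. Hence every connected component $\mathbb{G}_i$ of $\mathbb G$ is a path with at most $n$ vertices, so $S_{\mathbb{G}_i}$ is isomorphic to a subalgebra of $S_{\bp_n}$, and by Proposition~\ref{pro25061320} the semiring $S_{\mathbb G}$ is the $\{0,\omega\}$-direct union of the $S_{\mathbb{G}_i}$, hence a subalgebra of $\bigcup_{i\in I}^{\omega}S_{\bp_n}$ for a suitable index set $I$.

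It remains to observe that $\bigcup_{i\in I}^{\omega}S_{\bp_n}\in\mathsf{V}(S_{\bp_n}\circ S_{\bp_n})$. For finite $I$ this is exactly Lemma~\ref{scnsln1}\eqref{eq250602011}; for infinite $I$ it follows since every finitely generated subalgebra of $\bigcup_{i\in I}^{\omega}S_{\bp_n}$ is contained in some $\bigcup_{1\leq j\leq m}^{\omega}S_{\bp_n}$, so that $\bigcup_{i\in I}^{\omega}S_{\bp_n}$ satisfies only the identities common to all finite $\{0,\omega\}$-direct unions of copies of $S_{\bp_n}$ and thus lies in $\mathsf{V}(S_{\bp_n}\circ S_{\bp_n})$ by Lemma~\ref{scnsln1}\eqref{eq250602011} once more. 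Combining the two inclusions gives $\mathsf{V}(S_{\bp_n}\circ S_{\bp_n})=\mathcal{W}$. I expect the forbidden-subalgebra step --- excluding cycles and paths with more than $n$ vertices from $\mathbb G$ --- to be the only point requiring genuine care; the rest is a straightforward appeal to Theorem~\ref{siSg}, Proposition~\ref{pro25061320}, and Lemma~\ref{scnsln1}(1).
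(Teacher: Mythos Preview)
Your proposal is correct and follows essentially the same approach as the paper: verify directly that $S_{\bp_n}\circ S_{\bp_n}$ satisfies \eqref{id25060801}, then show that any graph semiring in $\mathcal{W}$ has its graph acyclic with all paths on at most $n$ vertices, and conclude via Lemma~\ref{scnsln1}\eqref{eq250602011}. The only difference is that the paper restricts attention to \emph{finite} graph semirings (implicitly using that $\mathbf{NF}_3$ is locally finite, so its subvarieties are generated by finite subdirectly irreducibles), whereas you also treat the infinite case explicitly via a finitely-generated-subalgebra argument; this extra care is harmless but not needed.
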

\begin{proof}
It is easy to check that $S_{\bp_n}\circ S_{\bp_n}$ satisfies \eqref{id25060801}.
In the remainder it is enough to show that every finite graph semiring that satisfies \eqref{id25060801}
lies in $\mathsf{V}(S_{\bp_n}\circ S_{\bp_n})$.
Let $S_{\mathbb G}$ be such an algebra.
Then the graph $\mathbb G$ is acyclic and has no path of length greater than $n-1$.
By Lemma ~\ref{scnsln1}(i),
we have that $S_{\mathbb G}$ is a member of $\mathsf{V}(S_{\bp_n}\circ S_{\bp_n})$.
Thus $\mathsf{V}(S_{\bp_n}\circ S_{\bp_n})$ is the subvariety of $\mathbf{NF}_3$ determined by \eqref{id25060801} as required.
\end{proof}

\begin{pro}\label{bpnfb}
$\mathsf{V}(S_{\bp_n})$ is the subvariety of $\mathbf{NF}_3$ determined by the identities
\eqref{id25060820} and \eqref{id25060801}.
\end{pro}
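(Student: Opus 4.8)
The plan is to identify the subvariety $\mathcal V$ of $\mathbf{NF}_3$ determined by \eqref{id25060820} and \eqref{id25060801} with $\mathsf V(S_{\bp_n})$, proving the two inclusions separately. The inclusion $\mathsf V(S_{\bp_n})\subseteq\mathcal V$ is immediate: it was noted in the proof of Lemma~\ref{scnsln1} that $S_{\bp_n}$ satisfies \eqref{id25060820}, and $S_{\bp_n}$, being a subalgebra of $S_{\bp_n}\circ S_{\bp_n}$, satisfies \eqref{id25060801} by Proposition~\ref{bpnbpnfb}; hence $S_{\bp_n}\in\mathcal V$.

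For the reverse inclusion it suffices, by Lemma~\ref{lem24121301} and Theorem~\ref{siSg} (together with the routine reduction to finite subdirectly irreducibles used throughout this section), to show that every finite graph semiring $S_{\mathbb G}\in\mathcal V$ lies in $\mathsf V(S_{\bp_n})$. The first step is to read off the structure of $\mathbb G$ from the two identities. Since $S_{\mathbb G}$ satisfies \eqref{id25060801}, exactly as in the proof of Proposition~\ref{bpnbpnfb} the graph $\mathbb G$ is acyclic and has no path of length greater than $n-1$; as every connected component of a graph of our standing type is a directed path or a cycle, this means that every component of $\mathbb G$ is a path of length at most $n-1$. The key new point is that \eqref{id25060820} forces $\mathbb G$ to have \emph{at most one} component that is a path of length exactly $n-1$: two such components $\mathbb G_1,\mathbb G_2$ would, by Proposition~\ref{pro25061320}, induce in $S_{\mathbb G}$ a subalgebra isomorphic to $S_{\bp_n}\circ S_{\bp_n}$, whereas $S_{\bp_n}\circ S_{\bp_n}$ does not satisfy \eqref{id25060820} (as recorded in the proof of Lemma~\ref{scnsln1}\eqref{eq250602011}) — a contradiction.

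Now I would combine this with Lemma~\ref{scnsln1}. By Proposition~\ref{pro25061320}, $S_{\mathbb G}$ is the $\{0,\omega\}$-direct union of the graph semirings of its components; by the previous paragraph at most one of these is isomorphic to $S_{\bp_n}$, and all the remaining ones embed into $S_{\bp_{n-1}}$. Hence $S_{\mathbb G}$ is (isomorphic to) a subalgebra of $S_{\bp_n}\circ\bigcup_{1\le i\le m}^{\omega}S_{\bp_{n-1}}$ for a suitable $m$. For $n\ge 3$, Lemma~\ref{scnsln1}\eqref{eq250602012} with $n$ replaced by $n-1$ gives $\mathsf V\bigl(S_{\bp_n}\circ\bigcup_{1\le i\le m}^{\omega}S_{\bp_{n-1}}\bigr)=\mathsf V(S_{\bp_n})$, so $S_{\mathbb G}\in\mathsf V(S_{\bp_n})$; for $n=2$ the structural analysis already shows that $\mathbb G$ has at most one component, whence $S_{\mathbb G}$ embeds in $S_{\bp_2}$ and the conclusion is trivial. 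This yields $\mathcal V\subseteq\mathsf V(S_{\bp_n})$ and hence $\mathcal V=\mathsf V(S_{\bp_n})$.

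The only genuinely new ingredient beyond the argument for Proposition~\ref{bpnbpnfb} is the use of \eqref{id25060820}: one must see that its failure in $S_{\bp_n}\circ S_{\bp_n}$ obstructs the presence of two disjoint paths of length $n-1$ in $\mathbb G$, and, conversely, that having at most one such path is precisely the condition that pushes $S_{\mathbb G}$ from $\mathsf V(S_{\bp_n}\circ S_{\bp_n})$ down into $\mathsf V(S_{\bp_n})$ via Lemma~\ref{scnsln1}\eqref{eq250602012}. Everything else — the reduction to finite graph semirings, the acyclicity and bounded-path-length consequences of \eqref{id25060801}, and the embedding arguments — is already available or routine; the only point requiring a little care is the degenerate case $n=2$, where $S_{\bp_{n-1}}$ is undefined and the relevant direct union collapses.
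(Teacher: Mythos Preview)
Your proof is correct, but it takes a different route from the paper's. The paper's argument is much shorter: having established in Proposition~\ref{lvvac} that the subvariety lattice of $\mathcal V_{ac}$ is a chain in which $\mathsf V(S_{\bp_n})$ is covered by $\mathsf V(S_{\bp_n}\circ S_{\bp_n})$, and knowing from Proposition~\ref{bpnbpnfb} that \eqref{id25060801} cuts out $\mathsf V(S_{\bp_n}\circ S_{\bp_n})$ inside $\mathbf{NF}_3$, it suffices to exhibit a single identity that holds in $S_{\bp_n}$ but fails in $S_{\bp_n}\circ S_{\bp_n}$; maximality does the rest. Your approach bypasses Proposition~\ref{lvvac} entirely and instead carries out a direct structural analysis of the finite graph semirings satisfying both identities, which in effect reproves the content of Subcase~1.2 of that proposition. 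The paper's argument is slicker and makes the role of \eqref{id25060820} transparent (any separating identity would do), while yours is more self-contained and would work even without the full lattice description; the trade-off is that you repeat work already packaged in Proposition~\ref{lvvac}.
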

\begin{proof}
From Proposition \ref{bpnbpnfb} we know that $\mathsf{V}(S_{\bp_n}\circ S_{\bp_n})$ is the subvariety of $\mathbf{NF}_3$
defined by \eqref{id25060801}.
By proposition~\ref{lvvac}, $\mathsf{V}(S_{\bp_n})$ is a maximal subvariety of $\mathsf{V}(S_{\bp_n}\circ S_{\bp_n})$.
Notice that the identity \eqref{id25060820} is true in $S_{\bp_n}$,
but does not hold in $S_{\bp_n}\circ S_{\bp_n}$.
It follows that $\mathsf{V}(S_{\bp_n})$ is determined within $\mathsf{V}(S_{\bp_n}\circ S_{\bp_n})$ by \eqref{id25060820}.
So the required result is true.
\end{proof}

\section{Conclusion}
We have shown that every finite $3$-nilpotent flat semiring is finitely based.
This partially solves an open problem proposed by Jackson et al.~\cite{jrz}.
However, the finite basis problem for general finite nilpotent flat semirings remains unresolved, even in the case of $S(W)$.
Unlike the $3$-nilpotent case, certain finite $4$-nilpotent flat semirings are nonfinitely based,
with $S_c(abc)$ being a specific example, which can be deduced by~\cite[Theorem 4.9]{jrz}.
This demonstrates the substantial challenges in obtaining a complete solution to the finite basis problem for finite nilpotent flat semirings.

Our classification of the finite basis property for $3$-nilpotent flat ai-semirings is unlikely to be extended to the $4$-nilpotent case. At the 2024 Theoretical and Computational Algebra conference (in celebration of the 75th birthday of John Meakin),
Marcel Jackson presented an overview of the NP-hardness of the finite basis property for $4$-nilpotent flat semirings. It would then follow that no tractable classification in the $4$-nilpotent case is possible (assuming $\texttt{P}\neq \texttt{NP}$).

\quad

\noindent
\textbf{Acknowledgements}
The authors are deeply grateful to Professor Pavel Kolesnikov for his generous and repeated assistance at various stages of this work.
We would also like to extend our sincere thanks to Professor Marcel Jackson for his comments and suggestions, which were particularly helpful in shaping the discussion in the Conclusion section.
Finally, we wish to extend our heartfelt thanks to the anonymous referees,
whose painstaking efforts in reviewing the manuscript and whose insightful observations and suggestions have greatly enhanced the clarity and quality of this paper.

\quad

\noindent
{\bf Disclosure Statement}
The authors do not have any relevant financial or non-financial competing interests.

\bibliographystyle{amsplain}

\begin{thebibliography}{99}
\bibitem{aei} L. Aceto, Z. \'Esik, A. Ing\'olfsd\'ottir, Equational theories of tropical semirings,
\emph{Theoret. Comput. Sci.} \textbf{298} (2003), 417--469.

\bibitem{bg}
N.R. Baeth, F. Gotti, Factorizations in upper triangular matrices over information semialgebras,
\emph{J. Algebra} \textbf{562} (2020), 466--496.

\bibitem{bs}
S. Burris, H.P. Sankappanavar, A Course in Universal Algebra, Springer-Verlag, New York, 1981.

\bibitem{cc}
A. Connes, C. Consani, On absolute algebraic geometry the affine case,
\emph{Adv. Math.} {\bf 390} (2021), 107909.

\bibitem{con}
J.H. Conway, Regular Algebra and Finite Machines, Chapman and Hall, London (1971)

\bibitem{dol07}
I. Dolinka, A nonfintely based finite semiring, \emph{Internat. J. Algebra Comput.} \textbf{17} (2007), no.\,8,
1537--1551.

\bibitem{dol09}
I. Dolinka,  A class of inherently nonfinitely based semirings. \emph{Algebra Universalis} \textbf{60} (2009), 19--35.




\bibitem{gjrz}
Z.D. Gao, M. Jackson, M.M. Ren, X.Z. Zhao,
The finite basis problem for additively idempotent semirings that relate to $S_7$, \emph{manuscript} (2025),
arXiv:2501.19049

\bibitem{gl}
K. G{\l}azek,
A Guide to the Literature on Semirings and their Applications in Mathematics and Information Science,
Kluwer Academic Publishers, Dordrecht-Boston-London, 2001.

\bibitem{gpz05}
S. Ghosh, F. Pastijn, X.Z. Zhao,
Varieties Generated by Ordered Bands I, \emph{Order} \textbf{22} (2005), no.\,2, 109--128.

\bibitem{go}
J.S. Golan, The Theory of Semirings with Applications in Mathematics and Theoretical Computer Science,
Longman Scientific and Technical, Harlow, 1992.

\bibitem{jac:flat} M. Jackson, Flat algebras and the translation of universal Horn logic to equational logic,
\emph{J. Symb. Logic} \textbf{73} (2008), 90--128.

\bibitem{jrz} M. Jackson, M.M. Ren, X.Z. Zhao, Nonfinitely based
ai-semirings with finitely based semigroup reducts, \emph{J. Algebra} \textbf{611} (2022), 211--245.

\bibitem{ku2011}
K. Kunen, Set Theory, Studies in Logic 34, College Publications, London, 2011.

\bibitem{lv}
E.W.H. Lee, M.V. Volkov, Limit varieties generated by completely 0-simple semigroups,
\emph{Int. J. Algebra Comput.} \textbf{21} (2011) no.\,1--2, 257--294.

\bibitem{ms} D. Maclagan, B. Sturmfels, Introduction to Tropical Geometry, Grad. Stud. Math., vol. 161.
American Mathematical Society, Providence, RI, 2015.

\bibitem{mv}
S.O. MacDonald, M.R. Vaughan-Lee,
Varieties that make one Cross,
\emph{J. Austral. Math. Soc. Ser. A} \textbf{26} (1978), no.\,3, 368--382.

\bibitem{pas05} F. Pastijn, Varieties generated by ordered bands II, \emph{Order} \textbf{22} (2005), no.\,2, 129--143.



\bibitem{rjzl}
M.M. Ren, M. Jackson, X.Z. Zhao, D.L. Lei,
Flat extensions of groups and limit varieties of additively idempotent semirings,
\emph{J. Algebra} \textbf{623} (2023), 64--85.

\bibitem{rlyc}
M.M. Ren, Z.X. Liu, M.Y. Yue, Y.Z. Chen, The finite basis problem for additively idempotent semirings of order four, III,
\emph{Semigroup Forum} (2025).
https://doi.org/10.1007/s00233-025-10584-5

\bibitem{rlzc}
M.M. Ren, J.Y. Liu, L.L. Zeng, M.L. Chen, The finite basis problem for additively idempotent
semirings of order four, I, \emph{Semigroup Forum} {\bf 110} (2025), 422--457.


\bibitem{rzw}
M.M. Ren, X.Z. Zhao, A.F. Wang, On the varieties of ai-semirings satisfying $x^3 \approx x$,
\emph{Algebra Universalis} {\bf 77} (2017), no.\,4, 395--408.

\bibitem{sr}
Y. Shao, M.M. Ren, On the varieties generated by ai-semirings of order two,
\emph{Semigroup Forum} {\bf 91} (2015), no.\,1, 171--184.

\bibitem{sapir14}
M.V. Sapir, Combinatorial Algebra: Syntax and Semantics, Springer Monographs in Mathematics, Springer, Cham, 2014.

\bibitem{shap23}
V.Yu. Shaprynski\v{\i},  Semiring identities in the semigroup $B_0$,
\emph{Semigroup Forum} {\bf 109} (2024), no.\,3, 693--705.

\bibitem{vol21} M.V. Volkov, Semiring identities of the Brandt monoid, Algebra Universalis {\bf 82} (2021), no.\,3, 42.

\bibitem{wrz} Y.N. Wu, M.M. Ren, X.Z. Zhao,
The additively idempotent semiring  $S^0_7$  is nonfinitely based,
\emph{Semigroup Forum} {\bf 108} (2024), no.\,2, 479--487.

\bibitem{wzr}
Y.N. Wu, X.Z. Zhao, M.M. Ren, On varieties of flat nil-semirings, \emph{Semigroup Forum}
{\bf 106} (2023), no.\,1, 271--284.

\bibitem{yrzs}
M.Y. Yue, M.M. Ren, L.L. Zeng, Y. Shao, The finite basis problem for additively idempotent semirings of order four, II,
\emph{Algebra Universalis} \textbf{86}, (2025) 33.

\bibitem{zrc}X.Z. Zhao, M.M. Ren, S. Crvenkovi{\' c}, Y. Shao, P. \DH api{\' c},
The variety generated by an ai-semiring of order three,
\emph{Ural Math. J.} {\bf 6} (2020), no.\,2, 117--132.
\end{thebibliography}

\end{document}